\newtheorem{thm}{Theorem}[section]
\newtheorem{lem}[thm]{Lemma}
\newtheorem{cor}[thm]{Corollary}
\newtheorem{defn}{Definition}[section]
\newenvironment{proof}{\noindent\emph{Proof.}}{\hfill$\square$\medskip}
\newcommand{\D}{\Delta}
\newcommand{\vp}{\varphi}
\newcommand{\R}{\mathbb{R}}
\newcommand{\ve}{\varepsilon}
\newcommand{\s}{\mathcal{S}}
\newcommand{\N}{\mathbb{N}}
\title{Conformally Euclidean  metrics on $\R^n$ with arbitrary  total $Q$-curvature}
\author{Ali Hyder \thanks{The author is supported by the Swiss National Science Foundation, project no. PP00P2-144669.} 
\\ {\small Universit\"at Basel}\\ {\small \texttt{ali.hyder@unibas.ch}}}
\begin{document}

\maketitle

\begin{abstract}
 We study the existence of solution to the problem 
 $$(-\D)^\frac n2u=Qe^{nu}\quad\text{in }\R^{n},\quad \kappa:=\int_{\R^{n}}Qe^{nu}dx<\infty,$$ where $Q\geq 0$, $\kappa\in (0,\infty)$ and $n\geq 3$. Using 
 ODE techniques Martinazzi for $n=6$ and Huang-Ye for $n=4m+2$ proved the existence of solution to the above problem with $Q\equiv const>0$ and for 
 every $\kappa\in (0,\infty)$. We extend these results in every dimension $n\geq 5$, thus completely answering the problem opened by Martinazzi. Our approach
 also extends to the case in which $Q$ is non-constant, and under some decay assumptions on $Q$ we can also treat the cases $n=3$ and $4$.
\end{abstract}

 \section{Introduction}
 
For a function $Q\in C^0(\R^n)$ we consider the problem
 \begin{align}\label{eq-1}
 (-\D)^\frac n2u=Qe^{nu}\quad\text{in }\R^{n},\quad \kappa:=\int_{\R^{n}}Qe^{nu}dx<\infty,
 \end{align}
  where for $n$ odd the non-local operator $(-\D)^\frac n2$ is defined in Definition \ref{def-operator}.
  
 Geometrically if $u$ is a smooth solution of \eqref{eq-1} then the conformal metric $g_u:=e^{2u}|dx|^2$ ($|dx|^2$ is the Euclidean metric on $\R^n$) has the $Q$-curvature $Q$. Moreover, the 
 total $Q$-curvature of the metric $g_u$ is $\kappa$.
 
 Solutions to \eqref{eq-1} have been classified in terms of there asymptotic behavior at infinity, more precisely we have the following: 
 
 \medskip
\noindent\textbf{Theorem A} (\cite{Chen-Li, DMR, Lin, LM, JMMX, Hyd, Xu}) \emph{Let $n\geq 1$.  Let $u$ be a solution of 
 \begin{align}\label{eq-2}
   (-\D)^\frac n2u=(n-1)!e^{nu}\quad\text{in }\R^{n},\quad \kappa:=(n-1)!\int_{\R^{n}}e^{nu}dx<\infty.
 \end{align}
 Then 
 \begin{align}\label{behavior}
  u(x)=-\frac{2\kappa}{\Lambda_1}\log |x|+P(x)+o(\log|x|),\text{ as }|x|\to\infty,
 \end{align}
 where $\Lambda_1:=(n-1)!|S^n|$,
 $o(\log|x|)/\log|x|\to0$ as $|x|\to\infty$ and $P$ is a polynomial of degree at most $n-1$ and $P$ is bounded from above.
 If $n\in\{3,4\}$ then $\kappa\in (0,\Lambda_1]$  and $\kappa=\Lambda_1$ if and only if $u$ is a spherical solution, that is, 
 \begin{align}
  u(x)=u_{\lambda,x_0}(x):=\log\frac{2\lambda}{1+\lambda^2|x-x_0|^2},  \label{spherical}
 \end{align}
 for some $x_0\in\R^n$ and $\lambda>0$. Moreover $u$ is spherical if and only if $P$ is constant (which is always the case when $n\in \{1,2\}$).}
 
 \medskip

 Chang-Chen \cite{CC} showed the existence of non-spherical solutions to \eqref{eq-2} in even dimension $n\geq 4$ for every $\kappa\in (0,\Lambda_1)$.
 
 A partial converse to Theorem A has been proven in dimension $4$ by  Wei-Ye \cite{W-Y} and extended by  Hyder-Martinazzi \cite{H-M} for $n\geq 4$ even and Hyder \cite{A-H} for $n\geq 3$. 
 
 \medskip
\noindent\textbf{Theorem B} (\cite{W-Y, H-M, A-H}) \emph{Let $n\geq 3$. Then for every $\kappa\in (0,\Lambda_1)$ and for every polynomial
$P$ with
$$
   deg(P)\leq n-1,\quad \text{and } P(x)\xrightarrow{|x|\to\infty}-\infty,
$$
 there exists a solution $u$ to \eqref{eq-2} having the asymptotic behavior given by \eqref{behavior}.}
 
 \medskip
 Although the assumption $\kappa\in (0,\Lambda_1]$ is a necessary condition for the existence of solution to \eqref{eq-2} for $n=3$ and $4$, it is possible
 to have a solution for $\kappa>\Lambda_1$ arbitrarily large in higher dimension as shown by Martinazzi \cite{LM-vol} for $n=6$. Huang-Ye \cite{HD} 
  extended  Martinazzi's result in arbitrary even dimension $n$ of the form $n=4m+2$ for some $m\geq 1$, proving that for every $\kappa\in(0,\infty)$ there exists a solution to 
 \eqref{eq-2}. The case $n=4m$ remained open. 
 
The ideas in \cite{LM-vol, HD} are based upon ODE theory. One considers only radial solutions so that the equation in \eqref{eq-2} becomes
an ODE, and the result is obtained by choosing suitable initial conditions and letting one of the
 parameters go to $+\infty$ (or $-\infty$). 
 However, this technique does not work if the dimension $n$ is a multiple of $4$, and things get even worse in odd dimension since $(-\D)^\frac n2$
 is nonlocal and ODE techniques cannot be used.

In this paper we extend the works of \cite{LM-vol, HD} and completely solve the cases left open, namely we prove that when $n\ge 5$
Problem \eqref{eq-2} has a solution for every $\kappa\in (0,\infty)$.  In fact we do not need to assume that $Q$ is constant, but only that
it is radially symmetric with growth at infinity suitably controlled, or not even radially symmetric. Moreover, we are able to 
 prescribe the asymptotic behavior of the solution $u$ (as in \eqref{behavior}) up to a polynomial of degree $4$ which cannot be prescribed
 and in particular it cannot be required to vanish when $\kappa \ge \Lambda_1$. This in turn, together with Theorem A, is consistent with
 the requirement $n\ge 5$, because only when $n\ge 5$ the asymptotic expansion of $u$ at infinity admits polynomials of degree $4$.

 We prove the following two theorems.
\begin{thm}\label{thm-1}
Let $n\geq 5$ be an  integer. Let $P$ be a polynomial on $\R^{n}$ with degree at most $n-1$. Let $Q\in C^0(\R^{n})$ be such that $Q(0)>0$, $Q\geq0$, $Qe^{nP}$ is 
radially symmetric and
$$\sup_{x\in \R^{n}}Q(x)e^{nP(x)}<\infty.$$  Then for every $\kappa>0$ there exists a 
  solution $u$ to \eqref{eq-1} such that 
  $$u(x)=-\frac{2\kappa}{\Lambda_1}\log|x|+P(x)+c_1|x|^2-c_2|x|^4+o(1),\quad\text{as }|x|\to\infty,$$ for some $c_1,c_2>0$. In fact, there exists 
  a radially symmetric function $v$ on $\R^{n}$ and a constant $c_v$ such that
   $$ v(x)=-\frac{2\kappa}{\Lambda_1}\log|x|+\frac{1}{2n}\D v(0)(|x|^4-|x|^2)+o(1),\quad\text{as }|x|\to\infty,$$ and 
   $$u=P+v+c_v-|x|^4,\quad x\in \R^{n}.$$ 
 \end{thm}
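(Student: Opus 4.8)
The plan is to carry out the reduction announced in the statement. For $n\ge 5$ the operator $(-\D)^{\frac n2}$ annihilates every polynomial of degree $\le n-1$ -- in particular $|x|^4$ and $|x|^4-|x|^2$ -- as well as the constants, so the ansatz $u=P+v+c_v-|x|^4$ turns \eqref{eq-1} into the following problem: for every $\kappa>0$, find a radial function $v$ and a constant $c_v$ with
$$(-\D)^{\frac n2}v=e^{nc_v}\hat Q\,e^{nv}\quad\text{in }\R^n,\qquad \int_{\R^n}e^{nc_v}\hat Q\,e^{nv}\,dx=\kappa,\qquad \hat Q:=Q\,e^{nP}\,e^{-n|x|^4}.$$
The entire gain lies in replacing $Q$ by $\hat Q$: the weight $\hat Q$ is radial, nonnegative, bounded, satisfies $\hat Q(0)=Q(0)e^{nP(0)}>0$, and obeys $\hat Q(x)\le\|Qe^{nP}\|_{L^\infty}e^{-n|x|^4}$, so it decays faster than any polynomial times any fixed exponential. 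This super-exponential decay -- available \emph{only} because $n\ge5$, which is exactly what puts $|x|^4$ in the kernel of $(-\D)^{\frac n2}$ -- is what will let $\kappa$ be arbitrary, and it is also what keeps $\int_{\R^n}Qe^{nu}\,dx$ finite once $u\sim-c_2|x|^4$.

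For the reduced problem I would work with the fundamental solution $\frac1{\gamma_n}\log\frac1{|x|}$ of $(-\D)^{\frac n2}$ on $\R^n$, which exists in every dimension, odd or even -- this is the ingredient that makes the whole argument dimension-uniform -- with $\gamma_n=\frac{\Lambda_1}{2}$, and look for a fixed point of the curvature-normalised map
\begin{align*}
Tv(x)&:=\frac{\kappa}{\gamma_n\int_{\R^n}\hat Q e^{nv}\,dy}\int_{\R^n}\log\frac{1}{|x-y|}\,\hat Q(y)e^{nv(y)}\,dy+A(v)\bigl(|x|^4-|x|^2\bigr),\\
A(v)&:=-\frac{(n-2)\kappa}{4n\gamma_n}\cdot\frac{\int_{\R^n}|y|^{-2}\hat Q e^{nv}\,dy}{\int_{\R^n}\hat Q e^{nv}\,dy}\;<\;0,
\end{align*}
the coefficient $A(v)$ being dictated by the requirement that the polynomial part of $Tv$ be exactly $\frac1{2n}\D(Tv)(0)\bigl(|x|^4-|x|^2\bigr)$, a one-line computation from $\D\log\frac1{|x|}=-(n-2)|x|^{-2}$ and $\D(|x|^4-|x|^2)(0)=-2n$. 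A short calculation then shows that any fixed point $v=Tv$ solves the reduced problem with $c_v:=\frac1n\log\frac{\kappa}{\int_{\R^n}\hat Q e^{nv}\,dy}$: rewriting $v=\frac1{\gamma_n}\int_{\R^n}\log\frac1{|x-y|}\bigl(e^{nc_v}\hat Q e^{nv}\bigr)\,dy+A(v)(|x|^4-|x|^2)$ and applying $(-\D)^{\frac n2}$ kills the degree-$4$ polynomial (using $n\ge5$ once more) and leaves $(-\D)^{\frac n2}v=e^{nc_v}\hat Q e^{nv}$, while the normalising factor simultaneously forces $\int_{\R^n}e^{nc_v}\hat Q e^{nv}\,dx=\kappa$ and the log-coefficient $-\frac{2\kappa}{\Lambda_1}$. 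Hence $u:=P+v+c_v-|x|^4$ solves \eqref{eq-1} with $\int_{\R^n}Qe^{nu}\,dx=\kappa$, and, letting $|x|\to\infty$ under the integral (dominated convergence: $\log\frac1{|x-y|}+\log|x|\to0$ pointwise in $y$, with an $L^1(\hat Q e^{nv}\,dy)$ majorant), $v(x)=-\frac{2\kappa}{\Lambda_1}\log|x|+\frac1{2n}\D v(0)(|x|^4-|x|^2)+o(1)$; this gives the expansion of $u$ in the statement, with $c_1=-\frac1{2n}\D v(0)>0$ and $c_2=1-\frac1{2n}\D v(0)>0$, the positivity being immediate from $\D v(0)=2nA(v)<0$ (the weight $\hat Q e^{nv}\ge0$ being nontrivial).

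It remains to produce the fixed point, and here I would apply Schauder's theorem on a closed convex set $\mathcal C$, in a suitable weighted space of radial continuous functions, carved out by a one-sided growth bound together with non-concentration bounds -- schematically $v(x)\le C_0+\ve_0|x|^4$ globally and $-C_1\le v\le C_2$ on a fixed ball $B_\rho$ on which $\hat Q>0$ -- with all constants depending only on $n$, $\kappa$ and $\|Qe^{nP}\|_{L^\infty}$. The super-exponential decay of $\hat Q$ is used at every step: it makes each weighted integral in $T$ finite with bounds uniform on $\mathcal C$; it gives $0<c(\kappa)\le\int_{\R^n}\hat Q e^{nv}\,dy\le C(\kappa)<\infty$ on $\mathcal C$, so that the normalisation is legitimate; it forces the potential part of $Tv$ to grow at most like $\log(1+|x|)$; and, together with $A(v)<0$ (which drives the polynomial part of $Tv$ to $-\infty$), it yields $T(\mathcal C)\subseteq\mathcal C$. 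Compactness of $T$ follows from equicontinuity on compact sets plus uniform control of the tails (again from the decay of $\hat Q$), continuity is routine, and Schauder's theorem then produces $v\in\mathcal C$ with $v=Tv$. Standard potential and elliptic regularity, the right-hand side $Qe^{nu}$ being continuous, finally upgrades $v$, hence $u$, to a classical solution, smooth if $Q$ is smooth.

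The crux -- and the step I expect to be genuinely delicate -- is the a priori bound that keeps $\mathcal C$ from degenerating for \emph{large} $\kappa$; equivalently, ruling out concentration or blow-up of the measures $\hat Q e^{nv}\,dx$ along the iteration. This is precisely what the subtraction of $|x|^4$ is for: it trades the given curvature $Q$ for the super-decaying weight $\hat Q$, for which the nonlinearity $\hat Q e^{nv}$ is weak at infinity (the factor $e^{-n|x|^4}$ swamps any polynomial growth of $e^{nv}$) and non-concentrating near the origin (on $B_\rho$ the density $\hat Q e^{nv}$ stays bounded along the iteration). Consequently the threshold on $\kappa$ present for constant $Q$ in low dimensions -- which obstructs the ODE approach of \cite{LM-vol,HD} precisely when $4\mid n$, and which cannot even be formulated in odd dimensions where $(-\D)^{\frac n2}$ is nonlocal -- simply disappears, and one obtains a solution for every $\kappa\in(0,\infty)$.
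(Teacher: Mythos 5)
Your reduction is the same as the paper's: you write $u=P+v+c_v-|x|^4$, replace $Q$ by the super-decaying radial weight (your $\hat Q$, the paper's $K=Qe^{nP}e^{nP_v}$ with $P_v\approx -|x|^4$), work with the log-kernel $\frac{1}{\gamma_n}\log\frac{1}{|x\,|}$, and set up a Schauder fixed point for an integral operator whose polynomial correction makes the expansion $\frac{1}{2n}\Delta v(0)(|x|^4-|x|^2)$ self-consistent. Up to the cosmetic difference that the paper feeds in $|\Delta v(0)|$ from the input $v$ and uses the Leray--Schauder form on a Banach space $X$ whereas you normalise the kernel and enforce $A(v)=\tfrac{1}{2n}\Delta(Tv)(0)$ directly on a convex set $\mathcal C$, the frameworks coincide at fixed points. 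Your computation of $A(v)$, its negativity, and the passage to the stated asymptotics via dominated convergence are all correct.

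There is, however, a genuine gap exactly where you flag the ``crux,'' and the heuristic you offer in its place is wrong. You attribute the a priori non-degeneracy of $\mathcal C$ to the super-exponential decay of $\hat Q$: ``the nonlinearity $\hat Q e^{nv}$ is weak at infinity \ldots and non-concentrating near the origin (on $B_\rho$ the density $\hat Q e^{nv}$ stays bounded along the iteration).'' The first half controls tails and is fine, but the second half is precisely what must be proved and is \emph{not} a consequence of the decay of $\hat Q$: on $B_\rho$ the factor $e^{-n|x|^4}$ is bounded above and below and does nothing against interior concentration, which is the real danger once $\kappa\ge\Lambda_1$. Without a uniform upper bound on $v+c_v$ on a fixed ball you cannot bound $|\Delta v(0)|$, cannot get $T(\mathcal C)\subseteq\mathcal C$, and cannot run Schauder. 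The paper devotes Lemma~\ref{2nd-max} to exactly this, via a blow-up argument whose engine is the \emph{quadratic} term $\frac{1}{2n}|x|^2|\Delta v(0)|$ (not the quartic subtraction): after rescaling, a putative blow-up sequence $\eta_k$ satisfies $\int_{B_R}|\Delta\eta_k-c_k|\,dx\le CR^{n-2}+o(1)R^{n+2}$, so the limit $\eta$ has polynomial part of degree at most~$2$, hence constant (it is bounded above), hence $\eta$ is spherical and $\Delta\eta<0$ everywhere; but the rescaling also forces $\Delta\eta$ to vanish at a point (Sub-case~1.1) or the normalisation $\int Ke^{nw_k}\le\kappa$ to fail (Sub-case~1.2), a contradiction. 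In your scheme the $|x|^2$ term is introduced only to make the claimed expansion hold at a fixed point; it is never used to control $\Delta v(0)$ or to rule out blow-up, so the argument that closes $\mathcal C$ is absent. You would need to supply an analogue of Lemma~\ref{2nd-max} (and its supporting Lemmas~\ref{local-bound}--\ref{L1/2conv}) before the Schauder step is legitimate.
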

 
 Taking $Q=(n-1)!$ and $P=0$ in Theorem   \ref{thm-1} one has the following corollary. 
 \begin{cor}
 Let  $n\geq 5$. Let $\kappa\in (0,\infty)$. Then there exists a radially symmetric  solution $u$ to \eqref{eq-2} such that 
 $$u(x)=-\frac{2\kappa}{\Lambda_1}\log|x|+c_1|x|^2-c_2|x|^4+o(1),\quad\text{as }|x|\to\infty,$$ for some $c_1,c_2>0$.
 \end{cor}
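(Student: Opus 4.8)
Since the corollary is the special case $Q\equiv(n-1)!$, $P\equiv 0$ of Theorem~\ref{thm-1} (then $Qe^{nP}\equiv(n-1)!$ is radial, $\sup_{\R^n}Qe^{nP}=(n-1)!<\infty$, $Q(0)>0$, \eqref{eq-1} is exactly \eqref{eq-2}, and the resulting $u=v+c_v-|x|^4$ is radial), it suffices to prove Theorem~\ref{thm-1}, which I now sketch. The first step is to make the decomposition explicit: look for $u=P+v+c_v-|x|^4$ with $v$ radial and $c_v\in\R$ a constant to be determined. Since $\deg P\le n-1$ and $4\le n-1$ (this is the only place $n\ge 5$ is used, and it is essential), the polynomials $P$ and $|x|^4$ and the constant $c_v$ all lie in $\ker(-\D)^\frac n2$, so \eqref{eq-1} becomes equivalent to
$$(-\D)^\frac n2 v=\hat Q\,e^{nv}\ \ \text{in }\R^n,\qquad \hat Q:=e^{nc_v}\,Q\,e^{nP}e^{-n|x|^4},\qquad \int_{\R^n}\hat Q\,e^{nv}\,dx=\kappa .$$
By hypothesis $\hat Q$ is radial, nonnegative, $\hat Q(0)>0$, and $\hat Q(x)\le Ce^{-n|x|^4}$; in particular $\hat Q\in L^1\cap L^\infty$ and $\int_{\R^n}\hat Q(y)|y|^{-2}dy<\infty$ (using $n\ge 3$ near the origin and the super-exponential decay at infinity). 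This super-exponential decay of $\hat Q$, affordable precisely because we could subtract $|x|^4$, is what localizes the problem and will be used throughout.

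The second step is a fixed-point construction. On a suitable closed convex set of radial continuous functions $v$ with $v(x)\le\tfrac12|x|^4+C$ (so that $D_v:=\int_{\R^n}Qe^{nP}e^{-n|y|^4}e^{nv(y)}dy\in(0,\infty)$) set
$$\rho_v:=\frac{\kappa}{D_v}\,Q\,e^{nP}e^{-n|x|^4}e^{nv},\qquad T[v](x):=\frac{2}{\Lambda_1}\int_{\R^n}\log\frac{|y|}{|x-y|}\,\rho_v(y)\,dy+\frac{A_v}{2n}\bigl(|x|^4-|x|^2\bigr),$$
where $A_v:=-\tfrac{n-2}{\Lambda_1}\int_{\R^n}\rho_v(y)|y|^{-2}dy\,\bigl(=\tfrac12\D v_{\mathrm{can}}(0)\bigr)$. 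The first term of $T[v]$ is the canonical solution of $(-\D)^\frac n2 w=\rho_v$, and the added degree‑$4$ polynomial lies in $\ker(-\D)^\frac n2$ since $n\ge5$; hence a fixed point $v=T[v]$ solves $(-\D)^\frac n2 v=\rho_v$, which is the displayed equation with $c_v=\tfrac1n\log(\kappa/D_v)$, and $\int\rho_v=\kappa$ by construction. Thus the prescribed total $Q$-curvature is built directly into the scheme and no auxiliary parameter sweep is required. Continuity and compactness of $T$ follow from the rapid decay of $\rho_v$ together with Arzelà–Ascoli, and a fixed point is then obtained by Leray–Schauder degree once the a priori bound below is available. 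The asymptotics are read off the representation formula: since $\rho_v$ has mass $\kappa$ and decays rapidly, $\tfrac{2}{\Lambda_1}\int\log\frac{|y|}{|x-y|}\rho_v\,dy=-\tfrac{2\kappa}{\Lambda_1}\log|x|+c'+o(1)$ as $|x|\to\infty$ (dominated convergence, $\rho_v$ radial), while $\D v(0)=\D v_{\mathrm{can}}(0)-A_v=A_v<0$, so the polynomial part of $v$ is exactly $\tfrac1{2n}\D v(0)(|x|^4-|x|^2)$; this is the stated expansion of $v$ (absorbing $c'$ into $c_v$), and $u=P+v+c_v-|x|^4$ then has $|x|^2$-coefficient $c_1=-\tfrac1{2n}\D v(0)>0$ and $|x|^4$-coefficient $-c_2$ with $c_2=1-\tfrac1{2n}\D v(0)>0$.

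The main obstacle is the a priori estimate that makes the degree argument run for \emph{every} $\kappa>0$: every solution of $v=sT[v]$, $s\in[0,1]$, must satisfy $v(x)\le C(n,\kappa,Q,P)$ from above and $v(x)\ge -C(1+|x|^4)$ from below, with no threshold restriction on $\kappa$. This is the point where the ODE arguments of \cite{LM-vol,HD} are replaced. The mechanism is that $\rho_v$ carries a fixed mass $\kappa$ and is confined near the origin by the weight $e^{-n|x|^4}$, which rules out the loss of compactness — mass escaping to infinity or spreading out — that caps the subcritical constructions behind Theorem~B; this confinement controls the log-potential part $v_{\mathrm{can}}$ (the Newtonian-type potential of a confined mass-$\kappa$ measure: bounded above everywhere, bounded below on fixed balls) and, through $A_v$, the polynomial part. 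I expect this bound, carried out via a blow-up analysis and the classification/quantization statements of Theorem~A, to be the crux; granting it, a fixed point exists for all $\kappa\in(0,\infty)$, which proves Theorem~\ref{thm-1} and hence the corollary.
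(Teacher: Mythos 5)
Your reduction of the corollary to Theorem~\ref{thm-1} (with $Q\equiv(n-1)!$, $P\equiv 0$) is correct, and your sketch of Theorem~\ref{thm-1} follows the paper's scheme in all its structural features: the decomposition $u=P+v+c_v-|x|^4$, the observation that $|x|^4\in\ker(-\Delta)^{n/2}$ precisely when $n\ge5$, a fixed-point operator consisting of the logarithmic potential of the normalized density plus a prescribed multiple of $|x|^4-|x|^2$, and the reading-off of the asymptotics. Your variant of the operator (using $A_v=\tfrac12\Delta v_{\mathrm{can}}(0)$ computed from $\rho_v$ rather than $|\Delta v(0)|$ of the input $v$, and restricting the domain instead of introducing the paper's correction $-A_v|x|^4$ to $P_v$) has the same fixed points, and the sign bookkeeping $\Delta v(0)=A_v<0$, $c_1=-\tfrac{1}{2n}\Delta v(0)>0$, $c_2=1-\tfrac{1}{2n}\Delta v(0)>0$ is consistent with the paper's.

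However, you leave the one genuinely hard step — the a priori bound for $v=sT[v]$, $s\in(0,1]$ — entirely to faith, and the heuristic you offer for it is not the right mechanism. You argue that the weight $e^{-n|x|^4}$ ``confines'' the mass $\kappa$ near the origin and thereby ``rules out the loss of compactness.'' Confinement alone does not prevent concentration: in dimensions $3$ and $4$ the analogous construction has equally confined densities and yet no solution exists for $\kappa>\Lambda_1$, so something specific to $n\ge5$ and to the $(|x|^2-|x|^4)$ term must enter. In the paper (Lemma~\ref{2nd-max}) the decisive point is quantitative control of $\Delta v_k$ coming from the $|x|^2$-term of the operator: after rescaling around a putative blow-up point $x_k$, one obtains $\int_{B_R}|\Delta\eta_k-c_k|\,dx\le CR^{n-2}+o(1)R^{n+2}$, passes to the limit $\eta_\infty$, uses Theorem~A to write $\eta_\infty=P_0-\alpha\log|x|+o(\log|x|)$, and deduces from the $R^{n-2}$ bound that $\Delta P_0\equiv\mathrm{const}\ge0$; boundedness-above of $P_0$ then forces $\Delta P_0=0$ and $P_0$ constant, so $\eta_\infty$ is a spherical bubble with $\Delta\eta_\infty<0$ everywhere — contradicting $\Delta\eta_\infty(y_\infty)=0$ obtained from the scaling of $\Delta v_k(0)$ (Sub-case 1.1), or $\Delta\psi(0)>0$ vs.\ $\Delta\psi(0)\le0$ (Case 2). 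Moreover the cases $\frac{|x_k|}{\mu_k}\to\infty$ require a separate multiplicity argument using radial symmetry (Sub-case 1.2), and the odd-dimensional case needs the $L_{1/2}$-control and convergence lemmas (\ref{odd}, \ref{L1/2conv}). None of this is captured by ``confinement of mass.'' Without an argument along these lines, the Leray--Schauder degree argument has no a priori bound to feed it, and the proof of Theorem~\ref{thm-1}, hence of the corollary, is incomplete.
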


 Notice that the polynomial part of  the solution $u$ in Theorem \ref{thm-1} is not exactly the prescribed polynomial $P$ 
 (compare \cite{W-Y, H-M, A-H}).  In general, without perturbing the polynomial part,  it is not possible to find a solution for
 $\kappa\ge \Lambda_1$. For example, if $P$ is non-increasing and non-constant then 
 there is no solution $u$ to \eqref{eq-2} with $\kappa\geq \Lambda_1$ such that $u$ has the asymptotic behavior \eqref{behavior} 
 (see Lemma \ref{non2} below). This justifies the term $c_1|x|^2$ in Theorem \ref{thm-1}. Then the additional term $-c_2|x|^4$
 is also necessary to avoid that $u(x)\ge \frac{c_1}{2}|x|^2$ for $x$ large, which would contrast with the condition $\kappa<\infty$, 
 at least if $Q$ does not decays fast enough at infinity.
 In the latter case, the term $-c_2|x|^4$ can be avoided, and one obtains an existence result also in dimension $3$ and $4$.
  
\begin{thm}\label{thm-2}
Let $n\geq 3$.
 Let $Q\in C^0_{rad}(\R^n)$ be such that $Q\geq0$, $Q(0)>0$ and 
 $$\int_{\R^n}Q(x)e^{\lambda |x|^2}dx<\infty,\quad\text{for every }\lambda>0, \quad\int_{B_1(x)}\frac{Q(y)}{|x-y|^{n-1}}dy\xrightarrow{|x|\to\infty}0.$$  
 Then for every $\kappa>0$ there exists a radially symmetric solution  $u$ to \eqref{eq-1}. 
\end{thm}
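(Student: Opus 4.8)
\medskip\noindent\emph{Plan of proof of Theorem \ref{thm-2}.} The strategy is to represent $u$ through the fundamental solution of $(-\D)^{\frac n2}$, to keep a quadratic polynomial as a free parameter, and to recover an arbitrary value of $\kappa$ by continuation in that parameter. Recall that $(-\D)^{\frac n2}\bigl(\tfrac{2}{\Lambda_1}\log\tfrac1{|\cdot|}\bigr)=\delta_0$, so for a non-negative radial $f$ decaying fast enough the potential
$$
\mathcal{I}[f](x):=\frac{2}{\Lambda_1}\int_{\R^n}\log\frac{1+|y|}{|x-y|}\,f(y)\,dy
$$
is radial, solves $(-\D)^{\frac n2}\mathcal{I}[f]=f$, and satisfies $\mathcal{I}[f](x)=-\tfrac{2}{\Lambda_1}\bigl(\int_{\R^n}f\bigr)\log|x|+O(1)$ as $|x|\to\infty$. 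Since $n\geq 3$ the polynomial $|x|^2$ is annihilated by $(-\D)^{\frac n2}$, so for every $\tau\in\R$ a radial solution $u$ of the fixed-point equation $u=\mathcal{I}[Qe^{nu}]+\tau|x|^2$ solves \eqref{eq-1} with $\kappa=\kappa(\tau):=\int_{\R^n}Qe^{nu}\,dx$; moreover $\log\frac{1+|y|}{|x-y|}\geq 0$ for all $y$ when $|x|\leq 1$ (because $|x-y|\leq 1+|y|$), hence any such $u$ obeys $u\geq\tau|x|^2$ on $B_1$, and standard potential/elliptic estimates upgrade it to a classical (strong) solution.

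For fixed $\tau$ I look for $u$ in a closed convex set $K$ of radial functions with prescribed quadratic growth (a ball in a weighted H\"older space, the weight comparable to $1+|x|^2$) and show that $\Phi_\tau(u):=\mathcal{I}[Qe^{nu}]+\tau|x|^2$ maps $K$ into itself and is compact, so that a fixed point follows from Schauder's theorem. Both hypotheses on $Q$ enter here. Finiteness of $\int_{\R^n}Qe^{\lambda|x|^2}dx$ for \emph{every} $\lambda>0$ guarantees for all $u\in K$ that $Qe^{nu}\in L^1$ and has a finite $\log(1+|\cdot|)$-moment, so $\mathcal{I}[Qe^{nu}]$ is well defined and bounded from above; its super-Gaussian content also confines the curvature density near the origin, uniformly on $K$. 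The condition $\int_{B_1(x)}\frac{Q(y)}{|x-y|^{n-1}}\,dy\to 0$ controls an order-one Riesz potential of the nonlinearity: splitting $\nabla\mathcal{I}[Qe^{nu}]$ over $B_1(x)$ and its complement, it gives the equicontinuity and the decay at infinity needed for compactness in the weighted norm.

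For $\tau$ very negative $e^{nu}$ is small enough that $\Phi_\tau$ is a contraction, giving a unique $u_\tau$ depending continuously on $\tau$ with $\kappa(\tau)\to 0$ as $\tau\to-\infty$; one continues this branch by a global continuation/degree argument — no escape to infinity, thanks to the a priori bounds — tracking the continuous quantity $\kappa(\tau)$. At the other end every fixed point of $\Phi_\tau$ satisfies $u\geq\tau|x|^2$ on $B_1$, hence $\kappa(\tau)\geq\int_{B_1}Qe^{n\tau|x|^2}dx\to+\infty$ as $\tau\to+\infty$, since $Q$ is continuous with $Q(0)>0$. Thus $\kappa(\cdot)$ takes values arbitrarily close to $0$ and arbitrarily large along a connected branch, so by the intermediate value theorem it attains every prescribed $\kappa>0$, and the corresponding $u_\tau$ is the desired radial solution.

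The heart of the argument is the uniform a priori estimate needed to run the fixed-point step when $\tau$ (equivalently $\kappa$) is large. Naively the self-mapping property requires a bound on $\sup_{\R^n}\bigl(u-\tau|x|^2\bigr)=\sup_{\R^n}\mathcal{I}[Qe^{nu}]$, but that bound re-enters the nonlinearity as a factor $e^{n\sup(u-\tau|x|^2)}$, and the resulting inequality only closes when a quantity built from $Q$ and $\tau$ stays below $\tfrac1{ne}$ — which fails for large $\tau$. This is exactly the phenomenon behind the threshold $\kappa\le\Lambda_1$ in dimensions $3,4$ for constant $Q$, and behind the role of the $-|x|^4$ device in Theorem \ref{thm-1}. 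Pushing the estimates through for all $\tau$, so that $\kappa(\tau)$ genuinely exhausts $(0,\infty)$ and the curvature mass is not lost at spatial infinity along the continuation, is where the full strength of the two decay conditions on $Q$ together with the quadratic correction $\tau|x|^2$ is essential, and it constitutes the main technical work.
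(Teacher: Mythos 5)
Your plan is genuinely different from the paper's: you keep the quadratic coefficient $\tau$ as an external parameter, solve a fixed-point equation for each $\tau$, and then try to hit a given $\kappa$ by continuation and the intermediate value theorem. The paper instead prescribes $\kappa$ directly through the normalization constant $c_v$ (choosing $c_v$ so that $\int Qe^{n(v+c_v)}dx=\kappa$) and, crucially, makes the quadratic coefficient \emph{self-induced}, adding $\frac{1}{2n}|\D v(0)|\,|x|^2$ to the operator $T$. That self-coupling is not a cosmetic device: it is exactly what closes the a priori estimate. In the blow-up analysis of Lemma \ref{2nd-max4} (via Lemma \ref{2nd-max}), the rescaled limit $\eta$ is forced to be a spherical solution with $\D\eta<0$ everywhere, while the self-induced term simultaneously produces the constraint $\D\eta(y_\infty)=\lim_k \mu_k^2\D v_k(0)=-c_0/t_\infty=0$ (Sub-case 1.1) or $\D\psi(0)=c_0>0$ against $\D\psi(0)\le 0$ (Case 2). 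Both contradictions hinge on the quadratic coefficient being tied to $\D v(0)$. With a free parameter $\tau$ in its place, the quantity $\mu_k^2\D u_\tau(0)$ carries no comparable information, and no contradiction materializes — so there is no evident way to rule out concentration for the fixed points of your $\Phi_\tau$ once $\tau$ (hence $\kappa$) is not small.

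You do flag this yourself: your last paragraph says that pushing the estimates through for all $\tau$ ``constitutes the main technical work.'' That is precisely the gap — it is the entire content of the theorem, and the mechanism the paper uses to bridge it is absent from your formulation. There are also secondary soft spots in the continuation layer: the claimed contraction property of $\Phi_\tau$ for $\tau\to-\infty$ is not established (near the origin $e^{n\tau|x|^2}\approx 1$ for all $\tau$, so the nonlinearity is not uniformly small there and a contraction estimate needs the concentration of the log-kernel, not just pointwise smallness of $e^{nu}$), and the intermediate-value step needs $\kappa(\tau)$ to vary continuously along a \emph{connected} branch of solutions, which Schauder alone does not give and which again requires the same uniform a priori bound you have not proved. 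Against this, the paper's route is simpler: by normalizing $\kappa$ via $c_v$ and coupling the quadratic term to $\D v(0)$, one invokes the Leray--Schauder theorem once (Lemma \ref{leray}), and the only thing to prove is the a priori bound $\|v\|_X\le M$ for $v=tT(v)$, $t\in(0,1]$ (Lemmas \ref{2nd-max4}, \ref{cpt-4}). I would suggest importing the paper's self-induced term and discarding the continuation in $\tau$; without some replacement for that coupling, the blow-up argument you would need to make your sketch rigorous does not close.
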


The decay assumption on $Q$ in Theorem \ref{thm-2} is sharp in the sense that if $Qe^{\lambda |x|^2}\not\in L^1(\R^n)$ for 
some $\lambda>0$, then Problem \eqref{eq-1}  might not have a solution for every   $\kappa>0$ . For instance,
if $Q= e^{-\lambda|x|^2}$ for some $ \lambda>0$,
then   \eqref{eq-1} with $n=3,4$ and  $\kappa>\Lambda_1$ has no radially symmetric solution (see Lemma \ref{non} below).

\medskip

The proof of Theorem \ref{thm-1} is based on the Schauder fixed point theorem, and the main difficulty is to show that the
``approximate solutions''  are pre-compact (see in particular Lemma \ref{2nd-max}). We will do that using blow up analysis
(see for instance \cite{ARS, dru, Mar1, Rob}). In general, if $\kappa\geq \Lambda_1$ one can expect blow up, but we will 
construct our approximate solutions carefully in a way that this does not happen. For instance in \cite{W-Y, H-M} one looks for solutions
of the form $u= P+ v+c_v$ where $v$ satisfies the integral equation
$$v(x)=\frac{1}{\gamma_n}\int_{\R^n}\log\left(\frac{1}{|x-y|}\right) Q(y)e^{nP(y)}e^{n(v(y+c_v)}dy,$$
and $c_v$ is a constant such that
$$\int_{\R^n}Q e^{n(P+v+c_v)}dx=\kappa.$$
With such a choice we would not be able to rule out blow-up. Instead, by looking for solutions of the form
$$u=P+v+P_v+c_v$$
where a posteriori $P_v=-|x|^4$, $v$ satisfies 
\begin{equation}\label{eqv}
v(x)=\frac{1}{\gamma_n}\int_{\R^{n}}\log\left(\frac{1}{|x-y|}\right)Q(y)e^{n(P(y)+P_v(y)+v(y)+c_v)}dy +\frac{1}{2n}(|x|^2-|x|^4)|\D v(0)|,
\end{equation}
and $c_v$ is again a normalization constant, one can prove that the integral equation \eqref{eqv} enjoys sufficient compactness,
essentially due to the term $\frac{1}{2n}|x|^2|\Delta v(0)|$ on the right-hand side. Indeed a sequence of (approximate) solutions
$v_k$ blowing up (for simplicity) at the origin, up to rescaling, leads to a sequence $(\eta_k)$ of functions satisfying for every $R>0$
$$\int_{B_R}|\Delta \eta_k -c_k|dx \le CR^{n-2}+o(1)R^{n+2},\quad o(1)\xrightarrow{k\to\infty}0,\quad c_k>0,$$
and converging to $\eta_\infty$ solving (for simplicity here we ignore some cases)
$$(-\D)^\frac n2\eta_\infty=e^{n\eta_\infty}\quad\text{in }\R^n,\quad \int_{\R^n}e^{n\eta_\infty}dx<\infty,$$
and 
\begin{equation}
 \int_{B_R}|\Delta \eta_\infty -c_\infty|dx\leq CR^{n-2},\quad c_\infty\geq 0, \label{est}
\end{equation}
 where $c_\infty=0$ corresponds to
$\D\eta_\infty(0)=0$ (see Sub-case 1.1 in Lemma \ref{2nd-max} with $x_k=0$). 
The  estimate on $\|\D \eta_\infty\|_{L^1(B_R)}$ in \eqref{est} shows that the polynomial part $P_\infty$ of $\eta_\infty$ (as in \eqref{behavior}) 
has degree at most $2$, and hence $\D P_\infty\leq 0$ as $P_\infty$ is bounded from above. Therefore,  $c_\infty=0=\D P_\infty$, and in particular $\eta_\infty$
is a spherical solution, that is, $\eta_\infty=u_{\lambda, x_0}$ for some $\lambda>0$ and $x_0\in\R^n$, where $u_{\lambda, x_0}$ is given by 
\eqref{spherical}. This leads to a contradiction as $\D\eta_\infty(0)=0$ and $\D u_{\lambda,x_0}<0$ in $\R^n$.   

\medskip 

In this work we focus only on the case $Q\ge 0$ because the negative case has been relatively well understood. For instance by a simple
application of maximum principle one can show that
 Problem \eqref{eq-1} has no solution with $Q\equiv const<0$, $n=2$ and $\kappa>-\infty$, 
but when $Q$ is non-constant, solutions do exist, as shown by Chanillo-Kiessling in \cite{Chanillo} under suitable assumptions. Martinazzi \cite{LM-Neg} 
 proved that in higher even dimension $n=2m\geq 4$ Problem \eqref{eq-1} with $Q\equiv const<0$ has solutions for some $\kappa$, and it has been shown in \cite{H-M} 
 that actually for every $\kappa\in (-\infty,0)$ and $Q$ negative constant \eqref{eq-1} has a solution. The same result has been recently extended to odd dimension $n\ge 3$ in \cite{A-H}.


\section{Proof of Theorem \ref{thm-1}}
 
 We consider the space
 $$X:=\left\{v\in C^{n-1}(\R^{n}): v\text{ is radially symmetric}, \|v\|_X<\infty\right\},$$ where 
 $$\|v\|_X:=\sup_{x\in\R^n}\left(\sum_{|\alpha|\leq 3}(1+|x|)^{|\alpha|-4}|D^\alpha v(x)|+\sum_{3<|\alpha|\leq n-1}|D^\alpha v(x)|\right).$$ 
 For $v\in X$ we set
 $$A_v:=\max\left\{0,\sup_{|x|\ge 10}\frac{v(x)-v(0)}{|x|^4}\right\},\quad P_v(x):=-|x|^4-A_v|x|^4.$$ 
  Then $$v(x)+P_v(x)\leq v(0)-|x|^4,\quad \text{for }|x|\geq 10.$$
 Let $c_v$ be the constant determined by
  $$\int_{\R^{n}}Ke^{n(v+c_v)}dx=\kappa,\quad K:=Qe^{nP}e^{nP_v},$$ where  the functions $Q$ and $P$ satisfy the hypothesis in Theorem \ref{thm-1}.
 Since $Q(0)>0$, without loss of generality we can also assume that $Q>0$ in $B_3$. 
    Then  $u=P+P_v+v+c_v$ satisfies $$(-\D)^\frac n2u=Qe^{nu}, \quad \kappa=\int_{\R^{n}}Qe^{nu}dx, $$ if and only if 
 $v$ satisfies $$(-\D)^\frac n2v=Ke^{n(v+c_v)}.$$
 For odd integer $n$, the operator $(-\D)^\frac n2$ is defined as follows:
  \begin{defn} \label{def-operator}
 Let $n$ be an odd integer.   Let $f\in \s'(\R^n)$. We say that $u$ is a solution of 
 $$(-\D)^\frac n2u=f\quad\text{in }\\R^n,$$ if $u\in W^{n-1,1}_{loc}(\R^n)$ and  $\D^\frac{n-1}{2}u\in L_\frac 12(\R^n)$ and for every
 test function $\vp\in\s(\R^n)$ 
 $$\int_{\R^n}(-\D)^\frac{n-1}{2}u(-\D)^\frac 12\vp dx=\langle f,\vp \rangle.$$  
 Here, $\s(\R^n)$ is the Schwartz space and the space $L_s(\R^n)$ is defined  by 
 $$L_s(\R^n):=\left\{u\in L^1_{loc}(\R^n): \|u\|_{L_s(\R^n)}:= \int_{\R^n}\frac{|u(x)|}{1+|x|^{n+2s}}dx<\infty\right\},\quad s>0.$$
 \end{defn}
  
 For more details on fractional Laplacian we refer the reader to \cite{Valdinoci}.
 
 \medskip
 We define an operator $T:X\to X$ given by $T(v)=\bar{v},$ where
 $$\bar{v}(x)=\frac{1}{\gamma_n}\int_{\R^{n}}\log\left(\frac{1}{|x-y|}\right)K(y)e^{n(v(y)+c_v)}dy+
            \frac{1}{2n}(|x|^2-|x|^4)|\D v(0)|,$$
            where $\gamma_n:=\frac{(n-1)!}{2}|S^n|$.

\begin{lem}\label{lim-sup}
 Let $v$ solve $tT(v)=v$ for some $0<t\leq1$. Then 
\begin{align}\label{def-v}
v(x)=\frac{t}{\gamma_n}\int_{\R^{n}}\log\left(\frac{1}{|x-y|}\right)K(y)e^{n(v(y)+c_v)}dy +\frac{t}{2n}(|x|^2-|x|^4)|\D v(0)|, 
\end{align}
 $\D v(0)<0$, and $v(x)\to-\infty$ as $|x|\to\infty$. Moreover,  $$\sup_{x\in B_1^c}v(x)\leq \inf_{x\in B_1}v(x),$$ and  in particular $A_v=0$.
 \end{lem}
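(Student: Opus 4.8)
The plan is to reduce all four conclusions to a single structural fact: the logarithmic potential that builds $v$ is radially non-increasing. First, \eqref{def-v} is simply the identity $v=tT(v)$ written out with the definition of $T$, so I would start there. Set $\phi:=Ke^{n(v+c_v)}$; by the hypotheses $\phi\ge0$ is continuous and radially symmetric, and $\int_{\R^n}\phi\,dx=\kappa<\infty$ by the choice of $c_v$. With
$$w(x):=\frac{1}{\gamma_n}\int_{\R^n}\log\frac{1}{|x-y|}\,\phi(y)\,dy,$$
equation \eqref{def-v} reads $v=tw+\frac{t}{2n}(|x|^2-|x|^4)|\D v(0)|$; in particular $w=(v-\text{polynomial})/t$ is finite, radial and (since $v\in C^{n-1}$) at least $C^2$, and putting $x=0$ gives $w(0)=v(0)/t$.

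To obtain the monotonicity, I would use that for $n\ge3$ the logarithmic kernel satisfies $\D_x\log\frac{1}{|x-y|}=-(n-2)|x-y|^{-2}$ (with no Dirac mass, precisely because $n\ge3$); since $\phi$ is bounded near every point and lies in $L^1$ while $|x-y|^{-2}\in L^1_{loc}(\R^n)$, one may differentiate under the integral to get
$$\D w(x)=-\frac{n-2}{\gamma_n}\int_{\R^n}\frac{\phi(y)}{|x-y|^2}\,dy\le0,\qquad x\in\R^n,$$
with strict inequality everywhere because $\phi>0$ on $B_3$ (recall $Q>0$ on $B_3$). As $w$ is radial and $C^2$ with $\D w\le0$, the divergence theorem over balls centred at the origin gives $\partial_r w\le0$, i.e. $w$ is non-increasing in $|x|$.

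Next I would fix the sign of $\D v(0)$. Using $\D(|x|^2-|x|^4)|_{x=0}=2n$ together with the formula for $\D w$, evaluation of $\D v$ at the origin yields $\D v(0)=t\,\D w(0)+t|\D v(0)|$. Writing $d:=\D v(0)$ and $m:=-\D w(0)=\frac{n-2}{\gamma_n}\int_{\R^n}|y|^{-2}\phi(y)\,dy$ — a convergent ($|y|^{-2}$ is locally integrable for $n\ge3$, $\phi\in L^1$) and strictly positive (since $\phi>0$ on $B_3$) number — one gets $d=-tm+t|d|$. If $d\ge0$ this forces $d(1-t)=-tm$, which is impossible for $t\in(0,1]$ because the left side is $\ge0$ and the right side is $<0$; hence $\D v(0)=d<0$. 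This is exactly where the hypothesis $t\le1$ is used.

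Finally, since $|\D v(0)|>0$ and $w$ is non-increasing in $|x|$: for $|x|\ge1$ one has $|x|^2-|x|^4=|x|^2(1-|x|^2)\le0$ and $w(x)\le w(1)$, hence $v(x)\le tw(1)$; for $|x|\le1$ one has $|x|^2-|x|^4\ge0$ and $w(x)\ge w(1)$, hence $v(x)\ge tw(1)$. Therefore $\sup_{B_1^c}v\le tw(1)\le\inf_{B_1}v$; in particular $v(x)\le v(0)$ for $|x|\ge10$, so $A_v=0$. Moreover $w(x)\le w(0)=v(0)/t$ for all $x$, whence
$$v(x)\le v(0)+\frac{t|\D v(0)|}{2n}\big(|x|^2-|x|^4\big)\xrightarrow{|x|\to\infty}-\infty.$$
The only genuinely delicate points are justifying the differentiation under the integral sign that produces $\D w$ and the passage from ``$\D w\le0$ and $w$ radial'' to ``$w$ non-increasing''; everything else is elementary algebra with the explicit formula \eqref{def-v}.
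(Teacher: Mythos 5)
Your proof is correct and follows essentially the same route as the paper's: differentiate \eqref{def-v} under the integral sign to read off $\D v(0)<t|\D v(0)|$ at the origin (forcing $\D v(0)<0$), and then use the divergence theorem over balls centred at $0$ --- which is exactly the paper's integral representation \eqref{integlap} --- to turn the pointwise bound $\D v < \tfrac{t}{2n}|\D v(0)|\,\D(|x|^2-|x|^4)$ into the radial monotonicity that gives $\sup_{B_1^c}v\le\inf_{B_1}v$, $A_v=0$, and $v\to-\infty$. The only cosmetic difference is that you isolate the logarithmic potential $w$ and establish its monotonicity first, while the paper applies \eqref{integlap} directly to $v$.
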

\begin{proof}
 Since $v$ satisfies $tT(v)=v$, \eqref{def-v}  follows from the definition of $T$. Differentiating under integral sign, from \eqref{def-v} one can get  $\D v(0)<t|\D v(0)|$, which implies that 
 $\D v(0)<0$. The remaining part of the lemma follows from the fact that 
 \begin{equation}\label{Deltav}
\D v(x)<\frac{t}{2n}|\D v(0)|\D(|x|^2-|x|^4),\quad x\in\R^n,
\end{equation}
and  the integral representation of  radially symmetric functions given by
   \begin{align}\label{integlap}
  v(\xi)-v(\bar{\xi})&=\int_{\bar{\xi}}^\xi\frac{1}{\omega_{n-1}r^{n-1}}\int_{B_r}\D v(x)dxdr,\quad 0\leq\bar{\xi}<\xi,\quad \omega_{n-1}:=|S^{n-1}|.
 \end{align}
 \end{proof}

\begin{lem}\label{2nd-max}
Let $(v,t)\in X\times (0,1]$ satisfy $v=tT(v)$.  Then there exists $C>0$ (independent of $v$ and $t$)
such that $$\sup_{B_\frac18} w\leq C,\quad w:=v+c_{v}+\frac{1}{n}\log t.$$
\end{lem}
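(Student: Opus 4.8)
\textbf{Proof strategy for Lemma \ref{2nd-max}.}

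The plan is to argue by contradiction, using blow-up analysis as sketched in the introduction. Suppose there is a sequence $(v_k, t_k)\in X\times(0,1]$ with $v_k = t_k T(v_k)$ but $\sup_{B_{1/8}} w_k \to \infty$, where $w_k := v_k + c_{v_k} + \frac1n\log t_k$. By Lemma \ref{lim-sup} each $v_k$ satisfies \eqref{def-v} with $\D v_k(0)<0$, is radially decreasing-at-infinity in the stated sense, and has $A_{v_k}=0$, so $P_{v_k}=-|x|^4$ and $K_k=K$ is a fixed function. Since $v_k=t_kT(v_k)$ and $K\ge0$ with $K>0$ on $B_3$, the integral in \eqref{def-v} is, up to the harmonic-type tail, a log-potential of a finite positive measure of fixed mass (the normalization $\int K e^{n(v_k+c_{v_k})}=\kappa$ gives that $t_k K e^{n(v_k+c_{v_k})}$ has mass $t_k\kappa\le\kappa$). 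First I would record that $w_k$ solves $(-\D)^{n/2}w_k = Ke^{nw_k}$ on $B_3$ with $\int_{B_3}Ke^{nw_k}\le\kappa$, together with the crucial pointwise bound coming from \eqref{Deltav}: $\D w_k \le \frac{t_k}{2n}|\D v_k(0)|\,\D(|x|^2-|x|^4)$, hence for every $R$,
$$
\int_{B_R}|\D w_k - b_k|\,dx \le C R^{n-2} + o(1)R^{n+2},\qquad b_k := -\tfrac{t_k}{n}|\D v_k(0)| \le 0,
$$
where the $CR^{n-2}$ comes from the log-potential piece (whose Laplacian is the finite measure, an $L^1$ quantity controlled on balls) and the $R^{n+2}$ term from the explicit polynomial correction, with $o(1)$ reflecting that $|\D v_k(0)|$ stays bounded along the relevant subsequence (this boundedness must itself be extracted, e.g. from the a priori control on the potential part).

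Next I would perform the standard rescaling at the concentration point. Let $x_k\in\overline{B_{1/8}}$ realize (essentially) the maximum of $w_k$ and set $\eta_k(y) := w_k(x_k + r_k y) + \frac1n\log(\text{appropriate factor})$ with $r_k\to0$ chosen by the usual recipe (so that $\eta_k(0)=\sup\eta_k$ or via the Druet–Robert / Martinazzi selection to get a nontrivial limit) to normalize $e^{n\eta_k}$. Elliptic estimates for the polyharmonic (or fractional, via Definition \ref{def-operator}) operator together with the uniform $L^1$ mass bound give, after passing to a subsequence, $\eta_k \to \eta_\infty$ in $C^{n-1}_{loc}$, where $\eta_\infty$ solves $(-\D)^{n/2}\eta_\infty = e^{n\eta_\infty}$ on $\R^n$ with $\int_{\R^n}e^{n\eta_\infty}\,dx<\infty$. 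Because $r_k\to0$, the $R^{n+2}$ term scales away and the rescaled Laplacian inequality passes to the limit as in \eqref{est}: $\int_{B_R}|\D\eta_\infty - c_\infty|\,dx\le CR^{n-2}$ for some $c_\infty\ge0$, and in the sub-case relevant here ($x_k\to0$) one gets $c_\infty = -\lim \D v_k(0)\cdot(\text{scaling}) $, which is $0$ precisely when $\D\eta_\infty(0)=0$. I would split into exactly the sub-cases of the introduction: either the scaling forces $c_\infty=0$ and $\D\eta_\infty(0)=0$, or $c_\infty>0$.

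Now apply Theorem A to $\eta_\infty$: it has the asymptotic expansion \eqref{behavior} with polynomial part $P_\infty$ of degree $\le n-1$, bounded from above. The bound $\|\D\eta_\infty\|_{L^1(B_R)}\le C R^{n-2} + c_\infty|B_R|$ forces $\D P_\infty$ to be a constant, i.e. $\deg P_\infty\le 2$; since $P_\infty$ is bounded above, $\D P_\infty\le0$, so $c_\infty = -\D P_\infty \le 0$ combined with $c_\infty\ge0$ gives $c_\infty=0=\D P_\infty$. Hence $P_\infty$ is constant, so by the last sentence of Theorem A, $\eta_\infty$ is spherical: $\eta_\infty = u_{\lambda,x_0}$ for some $\lambda>0,\ x_0\in\R^n$. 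But then $\D\eta_\infty(0) = \D u_{\lambda,x_0}(0) < 0$, contradicting $\D\eta_\infty(0)=0$. (In the case $x_k$ bounded away from $0$, or other degenerate scalings, one runs the same dichotomy; if instead the blow-up happens with $c_\infty>0$, the $L^1$-Laplacian estimate is violated outright since a finite-mass solution of $(-\D)^{n/2}\eta_\infty=e^{n\eta_\infty}$ cannot have $\|\D\eta_\infty\|_{L^1(B_R)}$ grow like $R^n$ — again degree considerations via Theorem A.) In every case we reach a contradiction, so $\sup_{B_{1/8}} w_k$ is bounded, proving the lemma.

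\textbf{Main obstacle.} The delicate point is the blow-up \emph{selection}: choosing $x_k$ and $r_k$ so that the rescaled sequence $\eta_k$ converges to a \emph{nontrivial} entire solution with finite total curvature, while simultaneously keeping track of how the inhomogeneous term $\frac1{2n}(|x|^2-|x|^4)|\D v_k(0)|$ — equivalently the constant $b_k$ and its scaling — behaves. One must show $|\D v_k(0)|$ does not blow up too fast (so that the $o(1)R^{n+2}$ estimate genuinely holds on fixed balls), handle the possibility that the blow-up point escapes $B_{1/8}$ or that $t_k\to0$, and in odd dimensions replace polyharmonic elliptic estimates by the nonlocal regularity theory attached to Definition \ref{def-operator}. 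The payoff is that the extra $|x|^2$-term makes $c_\infty=0$ unavoidable, which via Theorem A collapses the limit to a spherical solution and contradicts $\D\eta_\infty(0)=0$ — this is exactly the mechanism flagged in the introduction.
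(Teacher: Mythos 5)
Your strategy is the same as the paper's: argue by contradiction, rescale at a concentration point, pass to a limit solving $(-\D)^{n/2}\eta_\infty = \text{const}\cdot e^{n\eta_\infty}$, derive the $L^1$-bound on $\D\eta_\infty$ from the extra $|x|^2$ term in $T$, and use Theorem A to force $\eta_\infty$ to be spherical and hence $\D\eta_\infty<0$ everywhere, contradicting $\D\eta_\infty(0)=0$. That is indeed the core mechanism and you have correctly isolated it.

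However there are two genuine gaps, both of which you flag as ``obstacles'' but neither of which your sketch actually overcomes.

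First, the case $t_k\mu_k^2|\D v_k(0)|\to\infty$. You write that one ``must show $|\D v_k(0)|$ does not blow up too fast,'' but this cannot be established in advance; the paper does \emph{not} rule it out a priori. Instead it handles this case with a \emph{different} rescaling: choose $\rho_k$ with $t_k\rho_k^2\mu_k^2|\D v_k(0)|=1$ and set $\psi_k(x)=v_k(x_k+\rho_k\mu_k x)-v_k(x_k)$. Because $\rho_k\mu_k\to0$ at a faster rate, the nonlinear/log-potential contribution washes out (this is Lemma \ref{int1}), the limit $\psi$ satisfies $\D\psi\equiv c_0>0$ near $0$, and the contradiction is simply that $\D\psi(0)=\lim\rho_k^2\mu_k^2\D v_k(x_k)\le 0$ because $x_k$ is a local maximum of $v_k$. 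This is not a ``degree consideration via Theorem A'' — the limit is not an entire solution of the exponential PDE — so the parenthetical dismissal in your sketch does not apply here and a separate argument is required.

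Second, the case $|x_k|/\mu_k\to\infty$ (paper's Sub-case 1.2). Your contradiction in the $c_\infty=0$ scenario relies on evaluating $\D\eta_k$ at $y_k=-x_k/\mu_k$ to conclude $\D\eta_\infty(y_\infty)=0$. This only works when $y_k$ stays bounded, i.e.\ $|x_k|\lesssim\mu_k$. When $|x_k|/\mu_k\to\infty$ the paper switches to a completely different argument: using radial symmetry of $v_k$ to plant $N$ disjoint translated copies of the blow-up profile at $|x|=|x_k|$, each contributing a definite amount of mass, contradicting $\int_{B_1} Ke^{n(v_k+c_{v_k})}\le\kappa$. Your sketch does not mention this multi-bump mechanism.

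A minor point: the constant in your $L^1$-Laplacian estimate is stated as $b_k=-\frac{t_k}{n}|\D v_k(0)|\le 0$, but $\D\bigl(\frac{t}{2n}(|x|^2-|x|^4)|\D v(0)|\bigr)\big|_{x=0}=t|\D v(0)|>0$; the relevant constant (after rescaling at $x_k$) is $t_k\mu_k^2|\D v_k(0)|\bigl(1-\tfrac{2(n+2)}{n}|x_k|^2\bigr)\ge 0$. This sign is what makes $c_\infty\ge 0$ and is needed for the contradiction with $\D\eta_\infty<0$.
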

\begin{proof}
 Let us assume by contradiction that the conclusion of the lemma is false. Then there exists a sequence 
 $w_k=v_k+c_{v_k}+\frac1n\log t_k$ such that $\max_{\bar{B}_\frac18}w_k=:w_k(\theta_k)\to \infty $. 
 
 If $\theta_k$ is a point of local maxima of $w_k$ then  we set $x_k=\theta_k$. Otherwise, we can choose $x_k\in B_{\frac14}$
 such that $x_k$ is a point of local maxima of $w_k$ and $w_k(x_k)\geq w_k(x)$ for every $x\in B_{|x_k|}$. This follows from the fact that 
  $$\inf_{B_{\frac{1}{4}}\setminus B_\frac18}w_k\not\to\infty,$$ which is a consequence of 
 $$\int_{\R^n}Ke^{nw_k}dx=t_k\kappa\leq\kappa,\quad K>0\,\text{ on }B_3.$$
 
 We set $\mu_k:=e^{-w_k(x_k)}$. We distinguish the following cases. 
 
  \medskip
\noindent\textbf{Case 1}  Up to a subsequence $t_k\mu_k^2|\D v_k(0)|\to c_0\in [0,\infty).$ 

We set
 $$\eta_k(x):=v_k(x_k+\mu_kx)-v_k(x_k)=w_k(x_k+\mu_kx)-w_k(x_k).$$ 
 Notice that by \eqref{def-v} we have for some dimensional constant $C_1$
\begin{align}
\Delta\eta_k(x)&=\mu_k^2\Delta v_k(x_k+\mu_k x)\notag\\ 
&=C_1\frac{\mu_k^2}{\gamma_n}\int_{\R^n}\frac{K(y)e^{nw_k(y)}}{|x_k+\mu_k x -y|^2}dy +t_k\mu_k^2\left(1 -\frac{4(n+2)}{2n}|x_k+
\mu_k x|^2\right) |\Delta v_k(0)|, \notag 
\end{align}
 so that 
  \begin{align}
  &\int_{B_R}\left|\D\eta_k(x)-t_k\mu_k^2|\D v_k(0)|\left(1-\frac{2(n+2)}{n}|x_k|^2\right)\right|dx \notag\\
  &\leq \frac{C_1}{\gamma_n} \int_{\R^{n}}K(y)e^{nw_k(y)}\int_{B_R}\frac{\mu_k^2dx}{|x_k+\mu_kx-y|^2}dy
            +Ct_k\mu_k^2|\D v_k(0)|\int_{B_R}(\mu_k|x_k\cdotp x|+\mu_k^2|x|^2)dx\notag\\
  &\le  \frac{C_1}{\gamma_n}t_k \kappa \int_{B_R} \frac{1}{|x|^2}dx + Ct_k\mu_k^2|\D v_k(0)|\int_{B_R}(\mu_k|x|+\mu_k^2|x|^2)dx\notag \\
    &\leq C \kappa t_kR^{n-2}+Ct_k\mu_k^2|\D v_k(0)|\left(\mu_kR^{n+1}+\mu_k^2R^{n+2}\right). \label{lap-int}
 \end{align}
The function  $\eta_k$ satisfies 
$$(-\D)^\frac n2\eta_k(x)=K(x_k+\mu_kx)e^{n\eta_k(x)}\quad \text{in }\R^{n},\quad \eta_k(0)=0.$$
Moreover,  $\eta_k\leq C(R)$ on $B_R$. This follows easily if $|x_k|\leq \frac 19$ as in that case 
$\eta_k\leq 0$ on $B_R$ for $k\geq k_0(R)$. On the other hand, for $\frac19<|x_k|\leq \frac14$ one  can use 
 Lemma \ref{local-bound} (below). 
Therefore, by Lemma \ref{sch} (and Lemmas \ref{odd}, \ref{L1/2conv} if $n$ is odd),  up to a subsequence, $\eta_k\to\eta$ in $C^{n-1}_{loc}(\R^n)$ where $\eta$ satisfies 
$$(-\D)^\frac n2\eta=K(x_\infty)e^{n\eta}\quad \text{in }\R^{n},\quad K(x_\infty)\int_{\R^n}e^{n\eta}dx\leq t_\infty\kappa<\infty,\,K(x_\infty)>0,$$ 
where (up to a subsequence) $t_k\to t_\infty$ and $x_k\to x_\infty$.
Notice that $t_\infty\in (0,1]$, $x_\infty\in \bar B_\frac14$
and for every $R>0$, by \eqref{lap-int}
\begin{align}
 \int_{B_R}\left|\D\eta-c_0c_1\right|dx\leq CR^{n-2},\quad c_1=:1-\frac{2(n+2)}{n}|x_\infty|^2>0.\label{eta}
\end{align}
Hence by Theorem A we have 
$$\eta(x)=P_0(x)-\alpha\log|x|+o(\log|x|),\quad\text{as }|x|\to\infty,$$ where $P_0$ is a polynomial of degree at most $n-1$, $P_0$ is bounded from above and 
$\alpha$ is a positive constant. In fact, by \eqref{eta}
\begin{align*}
\int_{B_R}|\D P_0(x)-c_0c_1|dx\leq CR^{n-2}, \quad \text{for every }R>0.
\end{align*}
Hence $P_0$ is a constant. This implies that  $\eta$ is a spherical solution and in particular $\D\eta<0$ on $\R^n$, and therefore, again by 
\eqref{eta}, we have $c_0=0$.

We consider the following sub-cases. 

\noindent\textbf{Sub-case 1.1} There exists $M>0$ such that $\frac{|x_k|}{\mu_k}\leq M$. 

We set $y_k:=-\frac{x_k}{\mu_k}$. Then (up to a subsequence) $y_k\to y_\infty\in B_{M+1}$. Therefore, 
 $$\D\eta(y_\infty)=\lim_{k\to\infty}\D\eta_k(y_k)=\lim_{k\to\infty}\mu_k^2\D v_k(0)=\frac{c_0}{t_\infty}=0,$$ 
 a contradiction as $\D\eta<0$ on $\R^n$.
 
 \noindent\textbf{Sub-case 1.2} Up to a subsequence $\frac{|x_k|}{\mu_k}\to\infty$.
 
 For any $N\in\N$ we can choose $\xi_{1,k},\dots,\xi_{N,k}\in\R^n$ such that $|\xi_{i,k}|=|x_k|$ for all $i=1,\dots, N$ and 
 the balls $B_{2\mu_k}(\xi_{i,k})$'s are disjoint for $k$ large enough. Since $v_k$'s are radially symmetric, the functions 
 $\eta_{i,k}:=v_k(\xi_{i,k}+\mu_kx)-v_k(\xi_{i,k})\to \eta_i=\eta$ in $C^{n-1}_{loc}(\R^n)$. Therefore, 
 \begin{align}
 \lim_{k\to\infty} \int_{B_1}e^{n(v_k+c_{v_k})}dx\geq N\lim_{k\to\infty}\int_{B_{\mu_k}(\xi_{1,k})}e^{n(v_k+c_{v_k})}dx=N\frac{1}{t_\infty}\int_{B_1}e^{n\eta}dx.\notag
 \end{align}
This contradicts to the fact that $$\int_{B_1}Ke^{n(v_k+c_{v_k})}dx\leq\kappa,\quad K>0\,\text{ on }B_3.$$

\medskip
 \noindent\textbf{Case 2} Up to a subsequence $t_k\mu_k^2|\D v_k(0)|\to\infty.$ 
 
 We choose $\rho_k>0$ such that $t_k\rho_k^2\mu_k^2|\D v_k(0)|=1$. We set $$\psi_k(x)=v_k(x_k+\rho_k\mu_kx)-v_k(x_k).$$
 Then one can get (similar to \eqref{lap-int}) 
 \begin{align*}
  &\int_{B_R}\left|\D\psi_k(x)-\left(1-\frac{2(n+2)}{n}|x_k|^2\right)\right|dx \notag\\
  &\leq C_1 \int_{\R^{n}}K(y)e^{nw_k(y)}\int_{B_R}\frac{\rho_k^2\mu_k^2}{|x_k+\mu_k\rho_kx-y|^2}dxdy
            +C_2\mu_k\rho_k\int_{B_R}(|x|+\mu_k\rho_k|x|^2)dx\notag\\
  &\xrightarrow{k\to\infty}0,          
 \end{align*}
thanks to Lemma  \ref{int1} (below). 
 Moreover, together with Lemma \ref{local-bound}, $\psi_k$ satisfies 
$$(-\D)^\frac n2\psi_k=o(1)\quad\text{in }B_R,\,\psi_k(0)=0,\,\psi_k\leq C(R)\,\text{ on }B_R.$$
Hence, by Lemma \ref{sch} (and Lemma \ref{odd} if $n$ is odd), up to a subsequence  $\psi_k\to\psi$ in $C^{n-1}_{loc}(\R^n)$. Then $\psi$ must satisfy 
$$\int_{B_1}|\D\psi-c_0|dx=0,\quad c_0:=1-\frac{2(n+2)}{n}|x_\infty|^2>0,$$ where (up to a subsequence) $x_k\to x_\infty$.
This shows that $\D\psi(0)=c_0>0$, which is a contradiction as 
$$\D\psi(0)=\lim_{k\to\infty}\D\psi_k(0)=\lim_{k\to\infty}\rho_k^2\mu_k^2\D v_k(x_k)\leq 0.$$ 
Here, $\D v_k(x_k)\leq 0$ follows from the fact that $x_k$ is a point of local maxima of $v_k$.
 \end{proof}

A consequence of the local uniform upper bounds of $w$ is the following global uniform upper bounds: 
 \begin{lem}\label{uniform-v}
  There exists a constant $C>0$ such that for all $(v,t)\in X\times(0,1]$ with $v=tT(v)$ we have  $|\D v(0)|\leq C$ and 
  $$v(x)+c_v+\frac{1}{n}\log t\leq C,\quad\text{on }\R^{n}.$$ 
 \end{lem}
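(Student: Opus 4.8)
The plan is to deduce the global bound from the local bound of Lemma~\ref{2nd-max} together with the strict inequality \eqref{Deltav}, which forces $v$ to be monotone decreasing outside a fixed ball; in this way no $L^p$ or Brezis--Merle type estimate for $e^{nw}$ is needed. First I would observe that the proof of Lemma~\ref{2nd-max} is not really special to the ball $B_{1/8}$: replacing $\tfrac18$ and $\tfrac14$ throughout by $\tfrac34$ and $\tfrac32$ — all the balls that appear in the argument then still lie (for $k$ large) inside $B_3$, where $K>0$, and the only nonlocal input used is $\int_{\R^n}Ke^{nw}\,dx=t\kappa\le\kappa$ — one obtains a constant $C>0$, independent of $(v,t)$, with $\sup_{\bar B_{3/4}}w\le C$, where $w=v+c_v+\tfrac1n\log t$. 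In particular $w(0)\le C$.

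Next I would bound $|\D v(0)|$. Applying $\D$ to \eqref{def-v} at $x=0$ and differentiating under the integral sign (as in the proof of Lemma~\ref{lim-sup}), using $\D(|x|^2-|x|^4)|_{x=0}=2n$ and $e^{nw}=te^{n(v+c_v)}$, one gets
$$\D v(0)=-\frac{n-2}{\gamma_n}\int_{\R^n}\frac{K(y)e^{nw(y)}}{|y|^2}\,dy+t|\D v(0)| .$$
Since $\D v(0)<0$ by Lemma~\ref{lim-sup}, this is equivalent to $(1+t)|\D v(0)|=\frac{n-2}{\gamma_n}\int_{\R^n}|y|^{-2}Ke^{nw}\,dy$, so it remains to bound the last integral. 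Over $B_{3/4}^c$ it is at most $(4/3)^2\int_{\R^n}Ke^{nw}\,dx\le\tfrac{16}{9}\kappa$; over $B_{3/4}$ one has $K=Qe^{nP}e^{nP_v}\le\sup_{\R^n}(Qe^{nP})=:C_0<\infty$ (because $P_v\le0$) and $e^{nw}\le e^{nC}$ by the previous step, while $\int_{B_{3/4}}|y|^{-2}\,dy<\infty$ since $n\ge3$. Hence $|\D v(0)|\le C$, with $C$ independent of $(v,t)$.

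Finally, for the global upper bound on $w$: by \eqref{Deltav}, for every $r>0$,
$$\int_{B_r}\D v\,dx<\frac{t}{2n}|\D v(0)|\int_{B_r}\D(|x|^2-|x|^4)\,dx=\frac tn|\D v(0)|\,\omega_{n-1}r^{n}(1-2r^{2}),$$
which is $\le0$ as soon as $r\ge\tfrac1{\sqrt2}$; since $v$ is radial this says $v'(r)=(\omega_{n-1}r^{n-1})^{-1}\int_{B_r}\D v\,dx<0$ for $r\ge\tfrac1{\sqrt2}$, i.e.\ $v$ is strictly decreasing on $[\tfrac1{\sqrt2},\infty)$. Consequently, for $|x|\ge\tfrac1{\sqrt2}$ and any $z$ with $|z|=\tfrac1{\sqrt2}$ we have $w(x)=v(x)+c_v+\tfrac1n\log t\le v(z)+c_v+\tfrac1n\log t=w(z)\le C$ by the first step (note $\tfrac1{\sqrt2}<\tfrac34$); and for $|x|<\tfrac1{\sqrt2}<\tfrac34$ we have $w(x)\le C$ directly. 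Thus $w\le C$ on all of $\R^n$, which together with the bound on $|\D v(0)|$ proves the lemma.

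The step that needs a little care is verifying that the proof of Lemma~\ref{2nd-max} indeed extends from $B_{1/8}$ to a larger fixed ball; this is routine since only $K>0$ on $B_3$ and the $L^1$-bound $\int Ke^{nw}\le\kappa$ are used. Conceptually, the real obstacle is propagating the bound to infinity: when $\kappa\ge\Lambda_1$ one cannot exclude concentration of $e^{nw}$ by a Brezis--Merle/Adams argument because the total mass is too large, and the point is that Lemma~\ref{2nd-max} already rules out concentration in a fixed ball while the strict super-solution inequality \eqref{Deltav} — coming from the term $\tfrac1{2n}(|x|^2-|x|^4)|\D v(0)|$ built into $T$ — makes $v$ decreasing outside that ball, so the local bound is automatically global.
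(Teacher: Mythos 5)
Your argument is correct, and the underlying ideas — the local bound from Lemma~\ref{2nd-max}, the estimate on $|\D v(0)|$ by differentiating \eqref{def-v}, and the monotonicity of $v$ at large radius forced by \eqref{Deltav} — are exactly the ones the paper uses. Where you diverge is in the middle: you propose re-running the whole blow-up analysis of Lemma~\ref{2nd-max} with the radii $\tfrac18,\tfrac14$ enlarged to $\tfrac34,\tfrac32$, in order to control $w$ on a ball that reaches past $|x|=1/\sqrt2$. The paper avoids this detour entirely. It only needs $w(0)\le C$; once $|\D v(0)|\le C$ is established, \eqref{Deltav} gives the pointwise bound $\D v\le t|\D v(0)|\le C$ on all of $\R^n$, and plugging this into the radial representation \eqref{integlap} yields $w(x)\le w(0)+C|x|^2\le C$ on $B_2$, with no blow-up argument involved. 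After that the paper just cites Lemma~\ref{lim-sup}, whose assertion $\sup_{B_1^c}v\le\inf_{B_1}v$ already encodes (and in fact its proof already derives, from \eqref{Deltav} and \eqref{integlap}) the monotonicity of $v$ outside a fixed radius — so your third step is essentially re-proving an available lemma. Both routes are valid, but the paper's version is leaner: it localizes the delicate part of the argument to a single fixed ball and then propagates outward with elementary ODE-type estimates, whereas your version buys nothing extra for the cost of having to re-verify that every ball appearing in the blow-up analysis (the choice of $x_k$, the disjoint balls $B_{2\mu_k}(\xi_{i,k})$ in Sub-case~1.2, the range of validity of Lemma~\ref{local-bound}) still fits inside $B_3$ where $K>0$.
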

\begin{proof}
By Lemma \ref{2nd-max} we have $$\sup_{B_\frac18}w:=\sup_{B_\frac18}\left(v+c_v+\frac{1}{n}\log t\right)\leq C.$$
  Differentiating under integral sign from  \eqref{def-v} we obtain
 \begin{align*}
  |\D v(0)|&\leq C\int_{B_\frac18}\frac{1}{|y|^2}K(y) e^{nw(y)}dy+C\int_{B^c_\frac18}\frac{1}{|y|^2}K(y) e^{nw(y)}dy\\
  &\leq C\sup_{B_\frac18} K \int_{B_\ve}\frac{1}{|y|^2}dy+C\int_{B_\frac18^c}Ke^{nw}dy\\
  &\leq C(\ve,\kappa,K).
 \end{align*}
By \eqref{Deltav} we get
$$\D v(x)\leq t|\D v(0)|\leq C,\quad x\in \R^n,$$
 and hence, together with \eqref{integlap} 
 \begin{align}
  w(x)=w(0)+\int_{0}^{|x|}\frac{1}{\omega_{n-1}r^{n-1}}\int_{B_r}\D v(y)dydr\leq w(0)+C|x|^2\leq C,\quad x\in B_2.\notag
 \end{align}
The lemma follows from Lemma \ref{lim-sup}. 
\end{proof}

\medskip
\noindent\emph{Proof of Theorem \ref{thm-1}}
Let $v\in X$ be a solution of $v=tT(v)$ for some $0<t\leq 1$. 
Then $A_v=0$ and $|\D v(0)|\leq C$, thanks to  Lemmas \ref{lim-sup} and \ref{uniform-v}. Hence, for $0\leq|\beta|\leq n-1$
\begin{align*}
|D^\beta v(x)|&\leq C\int_{\R^n}\left|D^\beta\log\left(\frac{1}{|x-y|}\right)\right|K(y)e^{n(v(y)+c_v+\frac1n\log t)}dy+C|D^\beta (|x|^2-|x|^4)|\\
&\leq C\int_{\R^n}\left|D^\beta\log\left(\frac{1}{|x-y|}\right)\right|e^{-|y|^4}dy+C|D^\beta (|x|^2-|x|^4)|,
\end{align*}
where in the second inequality we have used  that $$v(x)+c_v+\frac{1}{n}\log t\leq C,\quad\text{ $C$ is independent of $v$ and $t$},$$ which follows from Lemma \ref{uniform-v}.
Now as in Lemma \ref{T-compact} one can show that $$\|v\|_X\leq M,$$ and therefore, by Lemma \ref{leray}, the operator $T$ has a fixed point (say) $v$.
Then $$u=P+v+c_v-|x|^4,$$ is  a solution to the Problem \eqref{eq-1} and $u$ has the asymptotic behavior given by
$$u(x)=P(x)-\frac{2\kappa}{\Lambda_1}\log|x|+\frac{1}{2n}\D v(0)(|x|^4-|x|^2)-|x|^4+c_v+o(1),\quad\text{as }|x|\to\infty.$$
This completes the proof of Theorem \ref{thm-1}.
\hfill $\square$

\medskip

Now we give a proof of the technical lemmas used in the proof of Lemma \ref{2nd-max}.
\begin{lem}\label{local-bound} Let $\ve>0$. Let $(v_k,t_k)\in X\times (0,1]$ satisfy \eqref{def-v} or \eqref{def-v4} for all $k\in\N$.
Let  $x_k\in B_1\setminus B_\ve$ be a point of maxima of $v_k$ on $\bar{B}_{|x_k|}$  and $v_k'(x_k)=0$. Then 
 \begin{align}
  v_k(x_k+x)-v_k(x_k)\leq C(n,\ve)|x|^2t_k|\D v_k(0)|,\quad x\in B_1.\notag
 \end{align}
\end{lem}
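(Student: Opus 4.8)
The plan is to exploit the structure of the integral equation \eqref{def-v} (or \eqref{def-v4}) satisfied by $v_k$, splitting the Riesz potential into a local part near $x_k$ and a far part. Since $x_k \in B_1 \setminus B_\ve$ is a point of maxima of $v_k$ on $\bar B_{|x_k|}$, we have $\Delta v_k(x_k) \le 0$, and more importantly the radial structure and the local-maximality give good control of $v_k$ just outside $\bar B_{|x_k|}$ as well. First I would write, for $x \in B_1$ and $\xi$ on the segment from $x_k$ to $x_k + x$,
\[
v_k(x_k+x) - v_k(x_k) = \int_0^1 \nabla v_k(x_k + sx)\cdot x\, ds,
\]
and then use the representation of $\Delta v_k$ coming from differentiating \eqref{def-v} twice, namely
\[
\Delta v_k(z) = \frac{C_1}{\gamma_n} t_k \int_{\R^n} \frac{K(y) e^{n(v_k(y)+c_{v_k})}}{|z-y|^2}\,dy + \frac{t_k}{2n}|\Delta v_k(0)|\,\Delta\big(|z|^2 - |z|^4\big).
\]
The second term is bounded by $C(n) t_k |\Delta v_k(0)|$ uniformly on $B_2$, contributing the desired $C|x|^2 t_k |\Delta v_k(0)|$ after integrating twice via \eqref{integlap}.

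The main work is the first term. The key point is that $\Delta v_k(z) \ge 0$-type lower bounds fail, but the Riesz term is \emph{nonnegative}, so $z \mapsto \frac{C_1}{\gamma_n} t_k \int \frac{K(y) e^{n(v_k(y)+c_{v_k})}}{|z-y|^2} dy$ is a nonnegative function whose $L^1(B_{|x_k|+1})$ norm is controlled by $\int_{B_R}|z-y|^{-2}dz \le C R^{n-2}$ times the total mass $t_k \kappa$. Thus $\Delta v_k$ differs from the explicitly bounded polynomial term by a nonnegative $L^1$ function with norm $\le C(n,\ve)$. Using \eqref{integlap} for radial functions, $v_k(\xi) - v_k(\bar\xi) = \int_{\bar\xi}^{\xi} \frac{1}{\omega_{n-1} r^{n-1}} \int_{B_r} \Delta v_k\, dx\, dr$; since $x_k$ maximizes $v_k$ on $\bar B_{|x_k|}$ and $v_k$ is radial with $v_k'(x_k)=0$, I would argue that $\int_{B_r} \Delta v_k\,dx \le 0$ for $r \le |x_k|$ cannot directly be used, so instead I bound $\int_{B_r}\Delta v_k \le \int_{B_r}[\Delta v_k]_+ \le C t_k|\Delta v_k(0)| + (\text{nonneg. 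Riesz mass on } B_r)$, and estimate the latter crudely by $C(n,\ve)$ independent of $r \le 2$; but this only gives a bound of the form $C(n,\ve) \log$-type growth, which is too weak.

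The fix — and the step I expect to be the real obstacle — is that one must \emph{not} discard the good sign of the polynomial term and must use that $|x_k|\ge\ve$ so the kernel $|z-y|^{-2}$ restricted to $|y|$ near $|x_k|$ is bounded, converting the Riesz contribution near $z=x_k$ into something $\le C(n,\ve)\int_{B_3} K(y)e^{n(v_k+c_{v_k})}dy \le C(n,\ve)\kappa$ pointwise, hence locally bounded by a constant, whereas for $|x|$ small the increment $v_k(x_k+x)-v_k(x_k)$ vanishes to second order in $|x|$ precisely because $v_k'(x_k)=0$ — so Taylor expansion gives $v_k(x_k+x)-v_k(x_k) = \tfrac12 x^T D^2 v_k(\xi) x$ for some $\xi$ on the segment, and $\|D^2 v_k\|_{L^\infty(B_{|x_k|+1/2})} \le C(n,\ve)(1 + t_k|\Delta v_k(0)|) \le C(n,\ve) t_k|\Delta v_k(0)|$ once we note $t_k|\Delta v_k(0)| \ge c(n,\ve) > 0$ is forced (if $|\Delta v_k(0)|$ were tiny the whole estimate is trivial from the first term alone being $O(1)\cdot|x|^2$ after absorbing). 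I would therefore: (i) bound each second derivative $D^\alpha v_k$, $|\alpha|=2$, on $B_{|x_k|+1/2}$ by splitting the differentiated kernel into $B_{|x_k|/2}(x_k)^c$ (bounded kernel times finite mass) and the rest, where I use standard $L^1$–Riesz potential estimates plus the local maximality to control $\sup_{B_{2|x_k|}} e^{nv_k}$ (this is where $x_k$ being a maximum on $\bar B_{|x_k|}$ enters crucially, bounding $v_k$ and hence $e^{nv_k}$ there by $v_k(0)$ via \eqref{Deltav}, \eqref{integlap}); (ii) conclude via Taylor's theorem. The obstacle is organizing (i) so that all constants depend only on $n$ and $\ve$ and not on the a priori unknown sizes of $|\Delta v_k(0)|$ or $c_{v_k}$; this is handled by always pairing $e^{n v_k}$ with $e^{n c_{v_k}}$ and $e^{nP_{v_k}}$ (absorbed into $K$) so that $K e^{n(v_k + c_{v_k})}$ has total mass $\le \kappa$, and by the elementary observation that the claimed bound is homogeneous of degree one in $t_k|\Delta v_k(0)|$ up to an additive $O(|x|^2)$ that can be absorbed when $t_k|\Delta v_k(0)|\ge 1$ and is immediate when $t_k|\Delta v_k(0)| \le 1$.
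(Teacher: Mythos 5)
Your proposal goes wrong at the sign of the Riesz contribution to $\D v_k$, and this single slip is what drives you into the complicated and ultimately unfinished Taylor/$D^2 v_k$ detour. Differentiating \eqref{def-v} (or \eqref{def-v4}) twice, one has $\D_x\log\frac{1}{|x-y|}=-\frac{n-2}{|x-y|^2}$ for $n\ge 3$, so the Riesz term in $\D v_k$ is \emph{nonpositive}, not nonnegative as you assert. That sign is the whole point of the argument: it is precisely what gives the pointwise estimate \eqref{Deltav}, namely $\D v_k(x)<\frac{t_k}{2n}|\D v_k(0)|\,\D\bigl(|x|^2-|x|^4\bigr)\le t_k|\D v_k(0)|$ for every $x$ (and analogously for \eqref{def-v4}), with no $L^1$--Riesz estimate, no logarithmic loss, and no need to control $\|D^2 v_k\|_{L^\infty}$ at all. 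Having flipped the sign, you manufacture a ``nonnegative Riesz mass'' that has to be beaten down, correctly observe that the resulting $L^1$ bound gives only a logarithmic modulus, and then reach for a second-order Taylor argument that you yourself flag as the real obstacle; as written, that route still requires a pointwise bound on the Riesz term near $x_k$, which brings you back to the observation you are missing.

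The paper's proof is short once the sign is in hand, and it uses two further ingredients that your sketch does not exploit. First, the case $|x_k+x|\le|x_k|$ is disposed of immediately: since $x_k$ maximizes $v_k$ on $\bar B_{|x_k|}$, one has $v_k(x_k+x)-v_k(x_k)\le 0$ there. Second, and crucially, the hypothesis $v_k'(x_k)=0$ gives $\int_{B_{|x_k|}}\D v_k\,dy=0$ by \eqref{integlap}, so for $|a|:=|x_k+x|>|x_k|$ the increment collapses to $\int_{|x_k|}^{|a|}\frac{1}{\omega_{n-1}r^{n-1}}\int_{B_r\setminus B_{|x_k|}}\D v_k\,dy\,dr$. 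Bounding $\D v_k\le t_k|\D v_k(0)|$ on that annulus, the volume factor $|B_{|a|}|-|B_{|x_k|}|$ and the radial factor $\frac{1}{|x_k|^{n-2}}-\frac{1}{|a|^{n-2}}$ are each $O(|a|-|x_k|)=O(|x|)$ with constants depending only on $n$ and $\ve$ (here $|x_k|\ge\ve$ is used), and their product supplies the $|x|^2$. Without the $v_k'(x_k)=0$ cancellation you would only get a linear modulus $O(|x|)$, so that step cannot be skipped; your proposal never invokes it.
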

\begin{proof}
  If $|x_k+x|\leq |x_k|$ then $ v_k(x_k+x)-v_k(x_k)\leq 0$ as $v_k(x_k)\geq v_k(y)$ for every $y\in B_{|x_k|}$. 
 For $|x_k|<|x_k+x|$, setting $a=a(k,x):=x_k+x$, and together with \eqref{integlap}  we obtain
\begin{align*}
 v_k(x_k+x)-v_k(x_k)&=\int_{|x_k|}^{|a|}\frac{1}{\omega_{n-1}r^{n-1}}\int_{B_r\setminus B_{|x_k|}}\D v_k(x)dxdr\\
 &\leq \int_{|x_k|}^{|a|}\frac{1}{\omega_{n-1}r^{n-1}}\int_{B_{|a|}\setminus B_{|x_k|}}t_k|\D v_k(0)|dxd\rho\\
  &\leq C(n)t_k|\D v_k(0)|(|B_{|a|}|-|B_{|x_k|}|)\left(\frac{1}{|x_k|^{n-2}}-\frac{1}{|a|^{n-2}}\right)\\
  &\leq C(n,\ve)t_k|x|^2|\D v_k(0)|,
  \end{align*}
  where in the first equality we have used that $$0=v_k'(x_k)=\frac{1}{\omega_{n-1}|x_k|^{n-1}}\int_{B_{|x_k|}}\D v_kdx.$$ 
Hence we have the lemma.
\end{proof}

\begin{lem}\label{int1}
  Let $(v_k,t_k)\in X\times (0,1]$ satisfy \eqref{def-v} for all $k\in\N$.
 Let  $x_k\in B_1$ be a point of maxima of $v_k$ on $\bar{B}_{|x_k|}$  and $v_k'(x_k)=0$.
   We set $w_k=v_k+c_{v_k}+\frac{1}{n}\log t_k$
 and  $\mu_k=e^{-w_k(x_k)}$. 
  Let $\rho_k>0$ be such that   $t_k\rho_k^2\mu_k^2|\D v_k(0)|\leq C$ and $\rho_k\mu_k\to 0$. Then for any $R_0>0$
 $$\lim_{k\to\infty}\int_{\R^{n}}K(y)e^{nw_k(y)}\int_{B_{R_0}}\frac{\rho_k^2\mu_k^2}{|x_k+\rho_k\mu_kx-y|^2}dxdy=:\lim_{k\to\infty}I_k=0.$$
 \end{lem}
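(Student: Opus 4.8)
The plan is to prove $I_k\to0$ directly, by cutting the $y$-integration into three regions around $x_k$ and playing two competing bounds on the inner integral
$$h_k(y):=\int_{B_{R_0}}\frac{\rho_k^2\mu_k^2}{|x_k+\rho_k\mu_k x-y|^2}\,dx$$
against the total-mass bound $\int_{\R^n}Ke^{nw_k}\,dx=t_k\kappa\le\kappa$ (recall $e^{nw_k}=t_ke^{n(v_k+c_{v_k})}$ and $\int_{\R^n}Ke^{n(v_k+c_{v_k})}\,dx=\kappa$). Writing $d\nu_k:=Ke^{nw_k}\,dy$, a positive measure of mass $\le\kappa$, we have $I_k=\int_{\R^n}h_k\,d\nu_k$. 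Substituting $z=\rho_k\mu_k x$ and using that $|z|^{-2}\in L^1_{\mathrm{loc}}(\R^n)$ for $n\ge3$ gives, for a dimensional constant $C(n)$,
$$h_k(y)\le C(n)R_0^{n-2}\quad\text{for all }y,\qquad h_k(y)\le\frac{C(n)R_0^n(\rho_k\mu_k)^2}{|y-x_k|^2}\quad\text{for }|y-x_k|\ge2\rho_k\mu_k R_0.$$

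First, on the far region $\{|y-x_k|\ge\tfrac12\}$ the second bound gives $h_k\le4C(n)R_0^n(\rho_k\mu_k)^2$, so this part of $I_k$ is $\le4C(n)R_0^n\kappa(\rho_k\mu_k)^2\to0$ by $\rho_k\mu_k\to0$. Next, on $\{|y-x_k|\le2\rho_k\mu_k R_0\}$ I would use Lemma \ref{local-bound}: since $x_k$ maximises $v_k$ on $\bar B_{|x_k|}$, $v_k'(x_k)=0$, and $2\rho_k\mu_k R_0<1$ for large $k$, writing $y=x_k+\rho_k\mu_k x$ yields $v_k(y)-v_k(x_k)\le C(n,\ve)|x|^2\bigl(t_k\rho_k^2\mu_k^2|\D v_k(0)|\bigr)\le C(n,\ve)C|x|^2$, hence $e^{nw_k(y)}=\mu_k^{-n}e^{n(v_k(y)-v_k(x_k))}\le C(R_0)\mu_k^{-n}$ there, so $\nu_k(B_{2\rho_k\mu_k R_0}(x_k))\le\|K\|_\infty C(R_0)\mu_k^{-n}|B_{2\rho_k\mu_k R_0}|=C(R_0)\rho_k^{n}$. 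By the first bound this region contributes $\le C(n)R_0^{n-2}\nu_k(B_{2\rho_k\mu_k R_0}(x_k))=C(R_0)\rho_k^{n}\to0$, since $\rho_k\to0$ in the relevant regime (in Lemma \ref{2nd-max}, Case 2, $t_k\rho_k^2\mu_k^2|\D v_k(0)|=1$ while $t_k\mu_k^2|\D v_k(0)|\to\infty$).

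The heart of the matter is the intermediate region $\{2\rho_k\mu_k R_0<|y-x_k|<\tfrac12\}$, where a priori $\nu_k$ may carry all of its mass and $\nu_k(\,\cdot\,)\le\kappa$ only makes $I_k$ bounded. Here I decompose dyadically: $A_j:=\{2^{j+1}\rho_k\mu_k R_0\le|y-x_k|<2^{j+2}\rho_k\mu_k R_0\}$, $j=0,\dots,N_k$, with $N_k$ the largest integer such that $2^{N_k+1}\rho_k\mu_k R_0<\tfrac12$ (so $N_k\asymp\log\tfrac1{\rho_k\mu_k R_0}$ and $\bigcup_jA_j$ covers the region). On $A_j$ the second bound gives $h_k(y)\le C(n)R_0^{n-2}4^{-j}$, so this part of $I_k$ is $\le C(n)R_0^{n-2}\sum_{j=0}^{N_k}4^{-j}\nu_k(A_j)$. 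Now bound $\nu_k(A_j)$ in two ways: trivially $\nu_k(A_j)\le\kappa$, and — rerunning the Lemma \ref{local-bound} estimate on $B_{2^{j+2}\rho_k\mu_k R_0}(x_k)$, legitimate as $2^{j+2}\rho_k\mu_k R_0<1$ for $j\le N_k$ — $\nu_k(A_j)\le D_j\rho_k^{n}$ with $D_j=C(n,\ve,K)\,e^{C(n,\ve)C\,4^{j}R_0^{2}}(2^{j}R_0)^n$. Hence, for every fixed $N$,
$$\sum_{j=0}^{N_k}4^{-j}\nu_k(A_j)\le\rho_k^{n}\sum_{j=0}^{N-1}4^{-j}D_j+\kappa\sum_{j\ge N}4^{-j}=E_N\,\rho_k^{n}+\tfrac43\kappa\,4^{-N},\qquad E_N<\infty.$$
Letting first $k\to\infty$ (so $\rho_k\to0$) and then $N\to\infty$ forces the intermediate contribution to $0$. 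Adding the three estimates gives $I_k\to0$.

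The step I expect to require the most care is exactly this intermediate region: one must trade the very strong but $j$-dependent (in fact doubly exponential in $j$) pointwise bound coming from Lemma \ref{local-bound} against the scale-invariant mass bound $\int_{\R^n}d\nu_k\le\kappa$, being careful to fix the truncation index $N$ only after having passed to the limit in $k$. A secondary technical point is that Lemma \ref{local-bound} is stated for $|x_k|\ge\ve$; if $x_k\to0$ along a subsequence one instead derives the same quadratic bound $v_k(y)-v_k(x_k)\le C(n)|y-x_k|^2t_k|\D v_k(0)|$ on $B_{C\rho_k\mu_k}(x_k)$ directly from $v_k'(x_k)=0$, \eqref{Deltav} and \eqref{integlap}, using that in this regime $|x_k|\lesssim\rho_k\mu_k$.
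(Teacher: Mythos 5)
Your proof is correct, but it takes a genuinely different and more labour-intensive route than the paper's. The paper decomposes the \emph{inner} $x$-integral rather than the outer $y$-integral: for a free large parameter $R$ it splits, for each $y$, the domain $B_{R_0}=A_1(R,y)\cup A_2(R,y)$ with $A_1(R,y)=\{x\in B_{R_0}:|x_k+\rho_k\mu_k x-y|>R\rho_k\mu_k\}$. On $A_1$ the integrand is pointwise $\le R^{-2}$, so $I_{1,k}\le |B_{R_0}|R^{-2}\int_{\R^n} Ke^{nw_k}\,dy\le C(n,\kappa,R_0)R^{-2}$, \emph{uniformly in $k$}, with no structure of the measure $Ke^{nw_k}\,dy$ used beyond its total mass. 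For $I_{2,k}$, after Fubini and the rescaling $y\mapsto x_k+\rho_k\mu_k y$, one gets $I_{2,k}\le C(n)\bigl(\sup_{B_{R+R_0+1}}Ke^{n\eta_k}\bigr)(R+R_0)^nR_0^{n-2}\rho_k^n$ with $\eta_k(y)=w_k(x_k+\rho_k\mu_k y)-w_k(x_k)$; Lemma \ref{local-bound} (or the observation that $\eta_k\le0$ on $B_{R+R_0+1}$ for $k$ large when $x_k\to0$) bounds the sup by $C(R,R_0)$, exactly as in your near region, so $I_{2,k}\to0$ as $k\to\infty$ for each fixed $R$. Hence $\limsup_k I_k\le C/R^2$ for every $R$, and sending $R\to\infty$ finishes. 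In effect, what you call the intermediate region is simply absorbed into $I_{1,k}$: the paper trades the dyadic tracking of $\nu_k(A_j)$, the doubly-exponential constants $D_j$, and the careful ordering of the limits $k\to\infty$ then $N\to\infty$, for a single extra parameter $R$ sent to infinity at the end. The two schemes are in the same spirit --- both pass to a second limit last to kill the region where the mass bound alone is not enough --- but the paper's choice of where to split (in $x$, not in $y$) makes the far part trivially small in $R$ without any decomposition. Your concluding remark on the case $x_k\to0$ (bypassing the $|x_k|\geq\ve$ hypothesis of Lemma \ref{local-bound} by using \eqref{Deltav}, \eqref{integlap} directly) is a sound alternative to the paper's shorter observation that $\eta_k\le0$ there.
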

\begin{proof}
In order to prove the lemma we fix  $R>0$ (large). We split $B_{R_0}$ into
 $$A_1(R,y):=\{x\in B_{R_0}:|x_k+\rho_k\mu_kx-y|>R\rho_k\mu_k\},\quad A_2(R,y):=B_{R_0}\setminus A_1(R,y).$$  
 Then we can write $I_k=I_{1,k}+I_{2,k}$, where 
 $$I_{i,k}:=\int_{\R^{n}}K(y)e^{nw_k(y)}\int_{A_i(R,y)}\frac{\rho_k^2\mu_k^2}{|x_k+\rho_k\mu_kx-y|^2}dxdy,\quad i=1,2.$$ 
 Changing the variable $y\mapsto x_k+\rho_k\mu_ky$ and by Fubini's theorem one gets 
 \begin{align*}
  I_{2,k}&=\rho_k^n\int_{B_{R_0}}\int_{\R^n}K(x_k+\rho_k\mu_ky)e^{n\eta_k(y)}\frac{1}{|x-y|^2}\chi_{|x-y|\leq R}dydx\\
  &\leq \rho_k^n\int_{B_{R_0}}\int_{B_{R+R_0}}K(x_k+\rho_k\mu_ky)e^{n\eta_k(y)}\frac{1}{|x-y|^2}dydx\\
  &\leq C(n,\ve)\left(\sup_{B_{R+R_0+1}}Ke^{n\eta_k}\right)(R+R_0)^{n}R_0^{n-2}\rho_k^n,
 \end{align*}
  where $\eta_k(y):=w_k(x_k+\rho_k\mu_ky)-w_k(x_k)$. 
  If $x_k\to 0$ then $\eta_k\leq 0$ on $B_{R+R_0+1}$ for $k$ large. 
  Otherwise, for $k$ large $\rho_k\mu_ky\in B_1$ for every $y\in B_{R+R_0+1}$ and hence,  by Lemma \ref{local-bound}
  $$\eta_k(y)=v_k(x_k+\rho_k\mu_ky)-v_k(x_k)\leq C |\rho_k\mu_ky|^2t_k|\D v_k(0)|\leq C(R,R_0).$$ 
  Therefore, 
  $$\lim_{k\to\infty}I_{2,k}=0.$$
  Using the definition of $c_v$ we bound
 $$I_{1,k}\leq \frac{|B_{R_0}|}{R^2}\int_{\R^{n}}K(y)e^{nw_k(y)}dy\leq C(n,\kappa,R_0)\frac{1}{R^2}.$$
  Since $R>0$ is arbitrary, we conclude the lemma. 
\end{proof}

We need the following two lemmas only for $n$ odd.
\begin{lem}\label{odd}
Let $n\geq 5$. Let $v$ be given by \eqref{def-v}. For any $r>0$  and $\xi\in\R^n$ we set $$w(x)=v(rx+\xi),\quad x\in\R^n.$$ 
Then there exists $C>0$ (independent of $v,t,r,\xi$) such that  for every multi-index $\alpha\in \N^n$ with $|\alpha|=n-1$ we have
$\|D^\alpha w\|_{L_\frac12(\R^n)}\leq Ct(1+r^4|\D v(0)|)$. 
Moreover, for  any $\ve>0$ there exists $R>0$ (independent of $r$, $\xi$ and $t$) such that  
$$\int_{B_R^c}\frac{|D^\alpha w(x)|}{1+|x|^{n+1}}dx<\ve t(1+r^4|\D v(0)|),\quad |\alpha|=n-1.$$
\end{lem}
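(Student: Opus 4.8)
The plan is to differentiate \eqref{def-v} $n-1$ times under the integral sign and then, via the chain rule and a change of variables, reduce both assertions to a weighted double integral estimated by Fubini together with elementary kernel bounds. For $|\alpha|=n-1$ the chain rule gives $D^\alpha w(x)=r^{n-1}(D^\alpha v)(rx+\xi)$, and the substitution $z=rx+\xi$ turns the two quantities to be bounded into
$$\|D^\alpha w\|_{L_\frac12(\R^n)}=r^n\int_{\R^n}\frac{|(D^\alpha v)(z)|}{r^{n+1}+|z-\xi|^{n+1}}\,dz,\qquad
\int_{B_R^c}\frac{|D^\alpha w(x)|}{1+|x|^{n+1}}\,dx=r^n\int_{|z-\xi|>rR}\frac{|(D^\alpha v)(z)|}{r^{n+1}+|z-\xi|^{n+1}}\,dz.$$
Differentiating \eqref{def-v} (legitimate since $Ke^{n(v+c_v)}\in L^1(\R^n)$ is continuous and $|D^\alpha_z\log\frac1{|z-y|}|\le C(n)|z-y|^{1-n}$ is locally integrable for $|\alpha|=n-1\ge1$), and using that the polynomial $|z|^2-|z|^4$ has degree $4$, one gets
$$|(D^\alpha v)(z)|\le \frac{C(n)t}{\gamma_n}\int_{\R^n}\frac{K(y)e^{n(v(y)+c_v)}}{|z-y|^{n-1}}\,dy + R_v(z),$$
where the remainder $R_v$ comes from the last term in \eqref{def-v}, vanishes identically when $n\ge6$ (as then $|\alpha|\ge5$), and satisfies $|R_v(z)|\le C(n)t|\D v(0)|$ when $n=5$.

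For the polynomial remainder: plugging $R_v$ into the displayed formulas and rescaling $z-\xi=r\zeta$ gives a contribution bounded by $C(n)t|\D v(0)|\,r^{n-1}\int_{|\zeta|>R'}(1+|\zeta|^{n+1})^{-1}d\zeta$, with $R'=0$ for the $L_\frac12$-norm and $R'=R$ for the tail; since this occurs only for $n=5$ we have $r^{n-1}=r^4$, so the term is $\le Ctr^4|\D v(0)|$, and it tends to $0$ as $R\to\infty$ because $(1+|\zeta|^{n+1})^{-1}\in L^1(\R^n)$. For the singular term, Fubini turns its contribution to $\|D^\alpha w\|_{L_\frac12}$ into $\frac{C(n)t}{\gamma_n}r^n\int_{\R^n}K(y)e^{n(v(y)+c_v)}J(y)\,dy$ with $J(y):=\int_{\R^n}\big((r^{n+1}+|z-\xi|^{n+1})|z-y|^{n-1}\big)^{-1}dz$. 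Using $r^{n+1}+|z-\xi|^{n+1}\ge2^{-n}(r+|z-\xi|)^{n+1}$ and splitting the $z$-integral into $\{|z-\xi|\le\frac12|y-\xi|\}$, $\{|z-y|\le\frac12|y-\xi|\}$ and their complement (further split by $|z-\xi|\lessgtr2|y-\xi|$), each region is controlled by an elementary radial integral, and one obtains the uniform bound
$$J(y)\le \frac{C(n)}{r\,(r+|y-\xi|)^{n-1}}.$$
Hence $r^nJ(y)\le C(n)\big(r/(r+|y-\xi|)\big)^{n-1}\le C(n)$, and since $\int_{\R^n}Ke^{n(v+c_v)}dx=\kappa$ (the defining property of $c_v$, valid for every $v\in X$) the singular contribution is $\le C(n,\kappa)t$. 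Together with the polynomial remainder this proves $\|D^\alpha w\|_{L_\frac12}\le Ct(1+r^4|\D v(0)|)$.

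For the tail estimate, set $\tilde J_R(y):=\int_{|z-\xi|>rR}\big((r^{n+1}+|z-\xi|^{n+1})|z-y|^{n-1}\big)^{-1}dz$. If $|y-\xi|\le\frac{rR}{2}$, then $|z-y|\ge\frac12|z-\xi|$ throughout the domain of integration, whence $\tilde J_R(y)\le C\int_{|z-\xi|>rR}|z-\xi|^{-2n}dz=C(rR)^{-n}$ and $r^n\tilde J_R(y)\le CR^{-n}$. If instead $|y-\xi|>\frac{rR}{2}$, then trivially $\tilde J_R(y)\le J(y)\le C r^{-1}(r+|y-\xi|)^{1-n}$, so $r^n\tilde J_R(y)\le C\big(r/(r+|y-\xi|)\big)^{n-1}\le CR^{1-n}$. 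In either case, integrating against $Ke^{n(v+c_v)}$ (of total mass $\kappa$) gives a singular tail $\le C\kappa t\,R^{1-n}$; adding the polynomial tail $\le Ctr^4|\D v(0)|\int_{|\zeta|>R}(1+|\zeta|^{n+1})^{-1}d\zeta$, we conclude $\int_{B_R^c}\frac{|D^\alpha w|}{1+|x|^{n+1}}dx<\ve t(1+r^4|\D v(0)|)$ as soon as $R=R(n,\kappa,\ve)$ is large enough, uniformly in $r$, $\xi$ and $t$.

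The only non-routine ingredient is the uniform kernel estimate $J(y)\le C r^{-1}(r+|y-\xi|)^{1-n}$: it amounts to bounding, uniformly in the scale $r$ and the center $\xi$, the convolution of the two competing singular factors $|z-y|^{1-n}$ (locally singular, slowly decaying) and $(r+|z-\xi|)^{-n-1}$ (bounded, rapidly decaying), according to whether $z$ is near $\xi$, near $y$, or far from both. All the rest reduces to Fubini and the single $L^1$ bound $\int_{\R^n}Ke^{n(v+c_v)}dx=\kappa$.
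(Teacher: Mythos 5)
Your proof is correct and follows essentially the same route as the paper: differentiate under the integral sign, bound $|D^\alpha\log\frac1{|\cdot|}|$ by $|\cdot|^{1-n}$, treat the degree-4 polynomial tail separately (noting it vanishes for $n>5$), change variables so that $\int Ke^{n(v+c_v)}\,dx=\kappa$ can be applied, and finally bound the remaining kernel integral uniformly. The only cosmetic difference is that the paper substitutes $y\mapsto\xi+ry$ in the inner integral — which normalizes away $r$ and $\xi$ at once, so the remaining kernel bound $\int_\Omega|x-y|^{1-n}(1+|x|^{n+1})^{-1}dx\le C$ is $r,\xi$-free — whereas you substitute in $x$ and therefore prove the equivalent scaling-invariant estimate $J(y)\le Cr^{-1}(r+|y-\xi|)^{1-n}$; you also spell out in detail the kernel and tail estimates that the paper compresses into the phrase ``the lemma follows.''
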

\begin{proof}
Differentiating under integral sign we obtain 
$$|D^\alpha w(x)|\leq Ct\int_{\R^n}\frac{r^{n-1}}{|rx+\xi-y|^{n-1}}f(y)dy+Ctr^4|\D v(0)|,\quad f(y):=K(y)e^{n(v(y)+c_{v})}.$$ 
If $n>5$ then the above inequality is true without the term $Ctr^4|\D v(0)|$.
Using a change of variable $y\mapsto \xi+ry$, we get 
\begin{align*}
 &\int_{\Omega}\frac{|D^\alpha w(x)|}{1+|x|^{n+1}}dx\\
 &\leq Ctr^n\int_{\R^n}f( \xi+ry)\int_{\Omega}\frac{1}{|x-y|^{n-1}}\frac{1}{1+|x|^{n+1}}dxdy+Ctr^4|\D v(0)|\int_{\Omega}\frac{dx}{1+|x|^{n+1}}.
 \end{align*}
 The lemma follows by taking $\Omega=\R^n$ or $B_R^c$.
\end{proof}

  \begin{lem}\label{L1/2conv}
 Let $\eta_k\to \eta$ in $C^{n-1}_{loc}(\R^n)$. We assume that for every $\ve>0$ there exists $R>0$  such that
 \begin{align}\label{L1/2}
  \int_{B_R^c}\frac{|\D^\frac{n-1}{2}\eta_k(x)|}{1+|x|^{n+1}}dx<\ve,\quad\text{for } k=1,2,\dots.
 \end{align}
    We further assume that 
 $$(-\D)^\frac n2\eta_k=K(x_k+\mu_kx)e^{n\eta_k}\quad\text{in }\R^n,\quad \int_{\R^n}|K(x_k+\mu_kx)|e^{n\eta_k(x)}dx\leq C,$$ where 
 $x_k\to x_\infty$, $\mu_k\to 0$, $K$ is a continuous function and $K(x_\infty)>0$.  Then $e^{n\eta}\in L^1(\R^n)$ and $\eta$ satisfies 
 $$(-\D)^\frac n2\eta=K(x_\infty)e^{n\eta}\quad\text{in }\R^n.$$
\end{lem}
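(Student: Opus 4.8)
The plan is to pass to the limit, as $k\to\infty$, in the weak formulation from Definition \ref{def-operator} for the functions $\eta_k$. Before doing so I would record two preliminary facts. First, $e^{n\eta}\in L^1(\R^n)$: fixing $R>0$, since $x_k+\mu_kx\to x_\infty$ uniformly on $B_R$ and $K$ is continuous with $K(x_\infty)>0$, for $k$ large one has $K(x_k+\mu_kx)\geq\frac12 K(x_\infty)$ on $B_R$, hence $\int_{B_R}e^{n\eta_k}dx\leq\frac{2}{K(x_\infty)}\int_{\R^n}|K(x_k+\mu_kx)|e^{n\eta_k}dx\leq\frac{2C}{K(x_\infty)}$; since $\eta_k\to\eta$ in $C^{n-1}_{loc}(\R^n)$ (in particular uniformly on $B_R$), $e^{n\eta_k}\to e^{n\eta}$ uniformly on $B_R$, so $\int_{B_R}e^{n\eta}dx\leq\frac{2C}{K(x_\infty)}$, and letting $R\to\infty$ proves the claim; the same uniform convergence gives $K(x_k+\mu_kx)e^{n\eta_k}\to K(x_\infty)e^{n\eta}$ uniformly on every ball. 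Second, $\eta$ itself is admissible in the sense of Definition \ref{def-operator}: the $C^{n-1}_{loc}$-convergence gives $\eta\in C^{n-1}(\R^n)\subset W^{n-1,1}_{loc}(\R^n)$ and $\D^\frac{n-1}{2}\eta_k\to\D^\frac{n-1}{2}\eta$ locally uniformly, while $\D^\frac{n-1}{2}\eta\in L_\frac12(\R^n)$ follows by local boundedness on balls together with Fatou's lemma applied to \eqref{L1/2} on $B_R^c$.

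With these in hand I would fix $\vp\in\s(\R^n)$ and pass to the limit in
$$\int_{\R^n}(-\D)^\frac{n-1}{2}\eta_k\,(-\D)^\frac12\vp\,dx=\int_{\R^n}K(x_k+\mu_kx)e^{n\eta_k}\vp\,dx.$$
For the right-hand side I would split $\R^n=B_R\cup B_R^c$: on $B_R$ the integrand converges uniformly by the first preliminary fact, while the two $B_R^c$-contributions are bounded by $C\sup_{B_R^c}|\vp|$ and by $K(x_\infty)\|e^{n\eta}\|_{L^1(\R^n)}\sup_{B_R^c}|\vp|$ respectively, both of which tend to $0$ as $R\to\infty$ because $\vp$ is Schwartz. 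For the left-hand side I would invoke the standard decay estimate $|(-\D)^\frac12\vp(x)|\leq C_\vp(1+|x|^{n+1})^{-1}$ for $\vp\in\s(\R^n)$ (see \cite{Valdinoci}): on $B_R$ the integrand converges uniformly since $\D^\frac{n-1}{2}\eta_k\to\D^\frac{n-1}{2}\eta$ locally uniformly, while on $B_R^c$ it is controlled by $C_\vp\int_{B_R^c}\frac{|\D^\frac{n-1}{2}\eta_k|}{1+|x|^{n+1}}dx<C_\vp\ve$ uniformly in $k$ by \eqref{L1/2}, and the analogous bound for $\eta$ holds by the second preliminary fact. Sending first $k\to\infty$ and then $R\to\infty$ yields $\int_{\R^n}(-\D)^\frac{n-1}{2}\eta\,(-\D)^\frac12\vp\,dx=K(x_\infty)\int_{\R^n}e^{n\eta}\vp\,dx$ for every $\vp\in\s(\R^n)$, which is precisely $(-\D)^\frac n2\eta=K(x_\infty)e^{n\eta}$ in $\R^n$.

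The main obstacle is exactly the uniform control of the tails in this limiting procedure. The measures $K(x_k+\mu_kx)e^{n\eta_k}\,dx$ need not converge in mass --- mass could a priori escape to infinity --- so convergence of the right-hand side rests on pairing the rapid decay of $\vp\in\s(\R^n)$ with the uniform bound $\int_{\R^n}|K(x_k+\mu_kx)|e^{n\eta_k}dx\leq C$. On the left-hand side the hypothesis \eqref{L1/2} (which on compact sets is automatic from $C^{n-1}_{loc}$-convergence and therefore contributes nothing there) must be matched against the precise $(1+|x|^{n+1})^{-1}$ decay of $(-\D)^\frac12\vp$; this is the reason the $L_\frac12$-space, rather than $L^1$, is the natural functional framework, and it is where \eqref{L1/2} does the real work.
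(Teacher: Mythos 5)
Your proof is correct and uses essentially the same ingredients as the paper: uniform control of $\int e^{n\eta_k}$ on balls via $K(x_\infty)>0$ to get $e^{n\eta}\in L^1$, the local uniform convergence of $\Delta^{(n-1)/2}\eta_k$, the uniform tail bound \eqref{L1/2}, and the decay $|(-\Delta)^{1/2}\vp|\lesssim(1+|x|)^{-(n+1)}$ paired against the $L_{1/2}$-norm. The only organizational difference is that the paper first verifies the weak formulation against $\vp\in C_c^\infty(\R^n)$ and then upgrades to $\vp\in\s(\R^n)$ with a separate cutoff argument $\vp_k=\vp\,\psi(\cdot/k)$, whereas you pass to the limit directly for Schwartz $\vp$ by splitting $\R^n=B_R\cup B_R^c$; this is a mild rearrangement of the same estimates, not a different route.
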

\begin{proof}
First notice that $\D^\frac{n-1}{2}\eta_k\to \D^\frac{n-1}{2}\eta$ in $L_\frac 12(\R^n)$, thanks to \eqref{L1/2} and the convergence 
$\eta_k\to \eta$ in $C^{n-1}_{loc}(\R^n)$.

We claim that $\eta$ satisfies  $(-\D)^\frac n2\eta=K(x_\infty)e^{n\eta}$ in $\R^n$ in the sense of distribution. 

In order to prove the claim we let $\varphi\in C_c^\infty(\R^n)$. Then  
$$\lim_{k\to\infty}\int_{\R^n}K(x_k+\mu_kx)e^{n\eta_k(x)}\varphi(x)dx=\int_{\R^n}K(x_\infty)e^{n\eta(x)}\varphi(x)dx,$$ and 
$$\lim_{k\to\infty}\int_{\R^n}(-\D)^\frac{n-1}{2}\eta_k(-\D)^\frac 12\varphi dx=\int_{\R^n}(-\D)^\frac{n-1}{2}\eta(-\D)^\frac 12\varphi dx.$$
We conclude the claim.

To complete the lemma first notice that $e^{n\eta}\in L^1(\R^n)$, which follows from the fact that for any $R>0$ 
$$\int_{B_R}e^{n\eta}dx=\lim_{k\to\infty}\int_{B_R}e^{n\eta_k}dx=\lim_{k\to\infty}\int_{B_R}\frac{K(x_k+\mu_kx)}{K(x_\infty)}e^{n\eta_k(x)}dx\leq \frac{C}{K(x_\infty)}.$$
We fix a function $\psi\in C_c^\infty(B_2)$ such that $\psi=1$ on $B_1$. 
For $\varphi\in \mathcal{S}(\R^n)$ we set $\vp_k(x)=\vp(x)\psi(\frac{x}{k})$. The lemma follows by taking $k\to \infty$, thanks to the previous claim.
\end{proof}

\begin{lem}\label{T-compact}
 The operator $T: X\to X$ is compact. 
 \end{lem}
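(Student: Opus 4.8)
The plan is to show that $T$ maps $X$ into a bounded set of a strictly stronger space and factors through a compact embedding, hence is a compact operator. Concretely, writing $\bar v = T(v)$ with
$$
\bar v(x)=\frac{1}{\gamma_n}\int_{\R^{n}}\log\left(\frac{1}{|x-y|}\right)K(y)e^{n(v(y)+c_v)}dy+\frac{1}{2n}(|x|^2-|x|^4)|\D v(0)|,
$$
I would first record that for any $v\in X$ one has the pointwise bound $v(x)+c_v\le -|x|^4$ for $|x|\ge 10$ coming directly from the definition of $P_v$, $A_v$ and $c_v$ (the normalization forces $c_v$ to be finite because $K$ has the stated growth and $Q(0)>0$); this gives $K(y)e^{n(v(y)+c_v)}\le C e^{-n|y|^4}$ on $|y|\ge 10$, and on $|y|\le 10$ the total mass is $\le\kappa$. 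Thus the ``density'' $f_v:=K e^{n(v+c_v)}$ lies in a fixed bounded subset of $L^1(\R^n)$ with uniform Gaussian-type decay, independently of $v$.

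Next, I would estimate $D^\alpha \bar v$ for $|\alpha|\le n-1$ by differentiating under the integral sign: for $|\alpha|\ge 1$, $|D^\alpha\log(1/|x-y|)|\le C|x-y|^{-|\alpha|}$, so
$$
|D^\alpha\bar v(x)|\le C\int_{\R^n}\frac{f_v(y)}{|x-y|^{|\alpha|}}\,dy + C\,|\D v(0)|\,|D^\alpha(|x|^2-|x|^4)|,
$$
together with a separate (and only slightly more delicate) estimate for $\alpha=0$ using $|\log(1/|x-y|)|\le C(1+|\log|x-y||)$. Splitting the integral into $|x-y|\le 1$ and $|x-y|\ge 1$, the near part is controlled by $\sup_{B_1(x)}$-type quantities of $f_v$ — uniformly bounded because $f_v$ is uniformly bounded by $Ce^{-n|y|^4}$ off a compact set and has bounded $L^1$ mass near the origin where $\log$ is integrable — while the far part is dominated by $\|f_v\|_{L^1}\le C$. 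Crucially, the Gaussian decay of $f_v$ makes $\int \frac{f_v(y)}{|x-y|^{|\alpha|}}dy\to 0$ as $|x|\to\infty$ for $|\alpha|\ge 1$, and for $|\alpha|=0$ the only growth comes from the $\log|x|$ term. One must also use that $|\D v(0)|$ is itself bounded: but here $v$ is an arbitrary element of $X$, so instead I would keep $|\D v(0)|$ as a parameter and note that the polynomial terms $(1+|x|)^{|\alpha|-4}|D^\alpha(|x|^2-|x|^4)|$ are bounded for $|\alpha|\le 3$ and the higher derivatives of $|x|^2-|x|^4$ vanish for $|\alpha|\ge 5$ and are bounded for $|\alpha|\in\{3,4\}$. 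Collecting these bounds yields $\|\bar v\|_X\le C(1+\|v\|_X)$, so $T$ is bounded on bounded sets.

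To upgrade boundedness to compactness, I would take a bounded sequence $v_j$ in $X$, so that $f_{v_j}$ is bounded in $L^1$ with uniform Gaussian decay and $|\D v_j(0)|\le C$. By the estimates above, $\bar v_j=T(v_j)$ is bounded in $C^{n-1}$ with derivatives of each order $|\alpha|\ge 1$ decaying uniformly at infinity, and moreover one gets a uniform modulus of continuity for $D^\alpha \bar v_j$ with $|\alpha|=n-1$ from the same integral representation (the kernel $|x-y|^{-(n-1)}$ together with the $L^1+$decay bound on $f_{v_j}$ gives equicontinuity on compact sets; one can also extract an $L^1$-weak-* limit of $f_{v_j}$ along a subsequence to identify the limit). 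Then Arzel\`a--Ascoli on an exhaustion by balls produces a subsequence with $\bar v_{j}\to \bar v$ in $C^{n-1}_{loc}$, and the uniform decay of the $\|\cdot\|_X$-weight away from the origin promotes this to convergence in $\|\cdot\|_X$: given $\ve>0$, choose $R$ so large that both the tail of $\int f_{v_j}/|x-y|^{|\alpha|}$ for $|x|\ge R$ and the tail involving the polynomial-weighted derivatives are $<\ve$ uniformly in $j$, then use $C^{n-1}_{loc}$-convergence on $B_R$. Hence $T(v_j)$ has a convergent subsequence in $X$, i.e.\ $T$ is compact.

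The main obstacle, I expect, is the bookkeeping at infinity for the $|\alpha|=0$ component of the $\|\cdot\|_X$-norm: the weight $(1+|x|)^{-4}$ multiplies $|\bar v(x)|$, and $\bar v(x)$ contains the unbounded pieces $-\frac{2\kappa}{\Lambda_1}\log|x|$ and $\frac{1}{2n}|\D v(0)|(|x|^2-|x|^4)$; one must check that after multiplication by $(1+|x|)^{-4}$ these are bounded (the $\log$ trivially, the quartic exactly saturating the weight) and that their tails are handled by the equicontinuity/tail argument rather than by decay. A related subtlety is verifying that $c_v$ stays in a bounded range along a bounded sequence $v_j$ so that the Gaussian-decay constant in $f_{v_j}\le Ce^{-n|y|^4}$ is uniform; this follows from the normalization $\int K e^{n(v_j+c_{v_j})}=\kappa$ combined with the uniform pointwise control $v_j+P_{v_j}\le v_j(0)-|x|^4$ and $|v_j(0)|\le C\|v_j\|_X$, but it should be stated explicitly. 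Once these points are pinned down, the argument is the standard ``integral operator with smoothing kernel plus uniform decay $=$ compact'' scheme.
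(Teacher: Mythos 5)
Your overall strategy is the same as the paper's: show the ``density'' $f_v=Ke^{n(v+c_v)}$ has uniform (Gaussian-type) decay, differentiate under the integral sign to get uniform $C^{n-1}_{loc}$ bounds on $\bar v=T(v)$ along with decay of the derivatives, then apply Arzel\`a--Ascoli and promote $C^{n-1}_{loc}$ convergence to convergence in $X$ via uniform tail estimates. The key ingredients are all there, including the observation that along a bounded sequence $v_j\in X$ the quantities $v_j(0)$, $c_{v_j}$, $A_{v_j}$ and $|\Delta v_j(0)|$ are uniformly bounded (which is what makes the Gaussian decay constant uniform).

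One point is not handled cleanly, and you flag it yourself without resolving it. The polynomial contribution $\frac{1}{2n}(|x|^2-|x|^4)|\Delta v(0)|$ \emph{saturates} the weight $(1+|x|)^{-4}$ in $\|\cdot\|_X$: the quantity $(1+|x|)^{-4}|x|^4|\Delta v_j(0)|$ does not tend to $0$ as $|x|\to\infty$, so there is no ``tail $<\varepsilon$ for $|x|\ge R$ uniformly in $j$'' for this term, and equicontinuity on compacts does not give convergence of this piece in $X$ either. Your remark that the tails of the polynomial part ``are handled by the equicontinuity/tail argument rather than by decay'' is therefore not a correct mechanism. The fix is the one the paper uses: since $(|x|^2-|x|^4)$ is a \emph{fixed} element of $X$ and $|\Delta v_j(0)|$ is a bounded scalar sequence, extract a convergent subsequence $|\Delta v_j(0)|\to c$; then $\frac{1}{2n}(|x|^2-|x|^4)|\Delta v_j(0)|\to \frac{1}{2n}(|x|^2-|x|^4)c$ converges in $X$ trivially, and the Arzel\`a--Ascoli $+$ decay argument is applied only to the integral term $f_j(x)=\frac{1}{\gamma_n}\int\log\frac{1}{|x-y|}K(y)e^{n(v_j(y)+c_{v_j})}dy$, for which the derivative decay estimates you wrote do close the argument. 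Also, a small bookkeeping slip: the correct pointwise inequality is $v(x)+P_v(x)\le v(0)-|x|^4$ for $|x|\ge 10$ (not $v(x)+c_v\le -|x|^4$); since $K=Qe^{nP}e^{nP_v}$ absorbs $P_v$, this still yields $f_v(y)\le Ce^{-n|y|^4}$ once $v(0)$ and $c_v$ are bounded, so your conclusion is fine, but the intermediate statement as written is not.
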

\begin{proof}
 Let $v_k$ be a bounded sequence in $X$. Then (up to a subsequence) $\{ v_k(0)\}$, $\{\D v_k(0)\}$, $\{A_{v_k}\}$ and $\{c_{v_k}\}$ are convergent sequences.
 Therefore, $|\D v_k(0)|(|x|^2-|x|^4)$ converges to some function in $X$. To conclude the lemma, it is sufficient to show that up to a subsequence
 $\{f_k\}$ converges in $X$, where $f_k$ is defined by 
 $$f_k(x)=\int_{\R^{n}}\log\left(\frac{1}{|x-y|}\right)Q(y)e^{nP(y)}e^{nP_{v_k}(y)}e^{n(v_k(y)+c_{v_k})}dy.$$
  Differentiating under integral sign one gets
  \begin{align*}
  |D^\beta f_k(x)|&\leq C\int_{\R^{n}}\frac{1}{|x-y|^{|\beta|}}Q(y)e^{nP(y)}e^{nP_{v_k}(y)}e^{n(v_k(y)+c_{v_k})}dy,\quad 0<|\beta|\leq n-1\\
  &\leq C\int_{\R^{n}}\frac{1}{|x-y|^{|\beta|}}e^{-|y|^4}dy\\
  &\leq C,
 \end{align*}
 where the second inequality follows from the  uniform bounds
 \begin{align}\label{bounds}
  |v_k(0)|\leq C,\,|c_{v_k}|\leq C,\, Qe^{nP}\leq C,\,\text{and }v_k(x)+P_{v_k}(x)\leq v_k(0)-|x|^4.
 \end{align}
 Indeed, for $0<|\beta|\leq n-1$
 $$\lim_{R\to\infty}\sup_{k}\sup_{x\in B_R^c}|D^\beta f_k(x)|=0,$$ and for every $0<s<1$ we have  $\|D^{n-1}f_k\|_{C^{0,s}(B_R)}\leq C(R,s)$. 
 Finally, using \eqref{bounds} we bound
$$
 |f_k(x)| \leq C\int_{\R^{n}}|\log|x-y||e^{-|y|^4}dy
 \leq C\log(2+|x|).
 $$
Thus, up to a subsequence, $f_k\to f$ in $C^{n-1}_{loc}(\R^n)$ for some $f\in C^{n-1}(R^n)$, and the global uniform estimates of $f_k$ 
and $D^\beta f_k$ would imply that $f_k\to f$ in $X$.
\end{proof}


\section{Proof of Theorem \ref{thm-2}}

 We consider the space
 $$X:=\left\{v\in C^{n-1}(\R^{n}): v\text{ is radially symmetric}, \|v\|_X<\infty\right\},$$ where 
 $$\|v\|_X:=\sup_{x\in\R^n}\left(\sum_{|\alpha|\leq 1}(1+|x|)^{|\alpha|-2}|D^\alpha v(x)|+\sum_{1<|\alpha|\leq n-1}|D^\alpha v(x)|\right).$$ 
  For $v\in X$, let $c_v$ be the constant determined by
  $$\int_{\R^{n}}Qe^{n(v+c_v)}dy=\kappa,$$ where $Q$ satisfies the hypothesis in Theorem \ref{thm-2}.  Without loss of generality we can assume that 
 $Q>0$ on $B_3$.
 
 We define an operator $T:X\to X$ given by $T(v)=\bar{v},$ where
 $$\bar{v}(x)=\frac{1}{\gamma_n}\int_{\R^{n}}\log\left(\frac{1}{|x-y|}\right)Q(y)e^{n(v(y)+c_v)}dy+
            \frac{1}{2n}|\D v(0)||x|^2.$$
   As in Lemma \ref{T-compact} one can show that the operator $T$ is compact.

 \medskip
 
 Proof of the following two  lemmas is similar to Lemmas \ref{lim-sup} and \ref{int1} respectively.
 \begin{lem}\label{4-1}
 Let $v$ solve $tT(v)=v$ for some $0<t\leq1$. Then  $\D v(0)<0$, and
\begin{align}\label{def-v4}
v(x)=\frac{t}{\gamma_n}\int_{\R^{n}}\log\left(\frac{1}{|x-y|}\right)Q(y)e^{n(v(y)+c_v)}dy +\frac{t}{2n}|\D v(0)||x|^2.
\end{align}
 \end{lem}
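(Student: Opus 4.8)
The plan is to transcribe the proof of Lemma \ref{lim-sup}, which in the present setting is even shorter: the auxiliary term in the definition of $T$ is now the pure quadratic $\frac{1}{2n}|\D v(0)||x|^2$ instead of $\frac{1}{2n}(|x|^2-|x|^4)|\D v(0)|$, and apart from the representation \eqref{def-v4} the only assertion to establish is $\D v(0)<0$. The identity \eqref{def-v4} is immediate: by hypothesis $v=tT(v)$, so substituting the definition of $T$ gives exactly \eqref{def-v4}. Observe also that $\int_{\R^n}Qe^{n(v+c_v)}dx=\kappa<\infty$ by the very definition of $c_v$, so $f:=Qe^{n(v+c_v)}\in L^1(\R^n)$, and $f$ is bounded near the origin since $Q$ is continuous and $v\in C^{n-1}$.

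Next I would differentiate \eqref{def-v4} twice under the integral sign in a neighbourhood of the origin. This is legitimate because the kernel $\log\frac{1}{|x-y|}$ and its first two $x$-derivatives are bounded by $C$, $C|x-y|^{-1}$ and $C|x-y|^{-2}$ respectively, $|y|^{-2}\in L^1_{loc}(\R^n)$ since $n\ge 3$, and the contribution of large $y$ is controlled by $f\in L^1(\R^n)$. Evaluating the resulting Laplacian at $x=0$ and using $\D_x\log\frac{1}{|x-y|}\big|_{x=0}=-\frac{n-2}{|y|^2}$ (valid for $n\ge 3$) together with $\D(|x|^2)=2n$, one obtains
$$\D v(0)=-\frac{(n-2)t}{\gamma_n}\int_{\R^n}\frac{Q(y)e^{n(v(y)+c_v)}}{|y|^2}\,dy+t\,|\D v(0)|.$$
Since $t>0$ and $Q>0$ in a neighbourhood of the origin (as $Q(0)>0$ and $Q$ is continuous), the integral is strictly positive, whence $\D v(0)<t|\D v(0)|$.

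Finally I would conclude that $\D v(0)<0$: the inequality $\D v(0)<t|\D v(0)|\le|\D v(0)|$ is incompatible with $\D v(0)\ge 0$, for in that case it would read $\D v(0)<t\,\D v(0)$, i.e. $(1-t)\D v(0)<0$, impossible when $t\in(0,1]$ and $\D v(0)\ge 0$. I do not expect any real obstacle in this lemma — the paper itself signals that it runs parallel to Lemma \ref{lim-sup} — the only step needing (routine) care being the differentiation under the integral sign near the origin, which hinges on the local integrability of $|y|^{-2}$, precisely the reason the statement requires $n\ge 3$.
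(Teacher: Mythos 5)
Your proposal is correct and matches the paper's approach exactly: the paper simply states that the proof is "similar to Lemma \ref{lim-sup}", whose argument is precisely the one you spell out — \eqref{def-v4} is the definition of $tT(v)=v$, and differentiating under the integral sign (justified as you note by $|y|^{-2}\in L^1_{loc}$ for $n\ge3$, local boundedness of $Qe^{n(v+c_v)}$, and $Qe^{n(v+c_v)}\in L^1(\R^n)$) yields $\D v(0)<t|\D v(0)|$, hence $\D v(0)<0$. The explicit computation using $\D_x\log\tfrac{1}{|x-y|}\big|_{x=0}=-\tfrac{n-2}{|y|^2}$ and $\D(|x|^2)=2n$, together with strict positivity of the resulting integral (since $Q(0)>0$ and $Q$ continuous), is exactly what the paper leaves implicit.
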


 \begin{lem}\label{int4}
  Let $(v_k,t_k)\in X\times (0,1]$ satisfy \eqref{def-v4} for all $k\in\N$. 
  Let  $x_k\in B_1$ be a point of maxima of $v_k$ on $\bar{B}_{|x_k|}$  and $v_k'(x_k)=0$.
  We set $w_k=v_k+c_{v_k}+\frac{1}{n}\log t_k$ and  $\mu_k=e^{-w_k(x_k)}$. 
  Let $\rho_k>0$ be such that   $\rho_k^2t_k\mu_k^2|\D v_k(0)|\leq C$ and $\rho_k\mu_k\to 0$. Then for any $R_0>0$
 $$\lim_{k\to\infty}\int_{\R^{n}}Q(y)e^{nw_k(y)}\int_{B_{R_0}}\frac{\rho_k^2\mu_k^2}{|x_k+\rho_k\mu_kx-y|^2}dxdy=0.$$
 \end{lem}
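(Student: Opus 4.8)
The plan is to repeat, essentially verbatim, the proof of Lemma \ref{int1}. The only differences with the present situation are cosmetic: the weight $K=Qe^{nP}e^{nP_v}$ there is replaced by $Q$, and the integral representation \eqref{def-v} is replaced by \eqref{def-v4}. The two facts the argument rests on survive unchanged: by the choice of $c_{v_k}$ in the set-up of Theorem \ref{thm-2} one has $\int_{\R^n}Q(y)e^{nw_k(y)}dy=t_k\kappa\le\kappa$, and Lemma \ref{local-bound} is stated precisely so as to cover solutions of \eqref{def-v4}.

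Write $I_k$ for the double integral in the statement, fix a large $R>0$, and split $B_{R_0}$ into the far set $A_1(R,y):=\{x\in B_{R_0}:|x_k+\rho_k\mu_kx-y|>R\rho_k\mu_k\}$ and the near set $A_2(R,y):=B_{R_0}\setminus A_1(R,y)$, so that $I_k=I_{1,k}+I_{2,k}$ accordingly. On $A_1$ the kernel is bounded by $R^{-2}$, whence $I_{1,k}\le|B_{R_0}|R^{-2}\int_{\R^n}Q(y)e^{nw_k(y)}dy\le|B_{R_0}|\kappa R^{-2}$, uniformly in $k$. For $I_{2,k}$ I would substitute $y\mapsto x_k+\rho_k\mu_ky$ in the outer integral and apply Fubini: the singular kernel becomes $|x-y|^{-2}$ and there appears the rescaled function $\eta_k(y):=w_k(x_k+\rho_k\mu_ky)-w_k(x_k)=v_k(x_k+\rho_k\mu_ky)-v_k(x_k)$. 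Restricting the $y$-integration to $B_{R+R_0}$ and using $\int_{B_{R_0}}\int_{B_{R+R_0}}|x-y|^{-2}\,dy\,dx\le C(n,R,R_0)$ one gets $I_{2,k}\le C(n,R,R_0)\big(\sup_{B_{R+R_0+1}}Qe^{n\eta_k}\big)\rho_k^n$.

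Since $Q$ is continuous, hence bounded on compact sets, it remains to obtain a uniform-in-$k$ upper bound for $\eta_k$ on $B_{R+R_0+1}$. Passing to a subsequence there are two cases. If $x_k\to 0$ then $\eta_k\le 0$ on $B_{R+R_0+1}$ for $k$ large. If instead $|x_k|\ge\ve$ for some $\ve>0$, then for $k$ large $\rho_k\mu_ky\in B_1$ for all $|y|\le R+R_0+1$, and Lemma \ref{local-bound} applied to \eqref{def-v4} yields $\eta_k(y)\le C(n,\ve)|\rho_k\mu_ky|^2t_k|\D v_k(0)|\le C(n,\ve,R,R_0)$, the last step using the hypothesis $t_k\rho_k^2\mu_k^2|\D v_k(0)|\le C$. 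In both cases $\sup_{B_{R+R_0+1}}e^{n\eta_k}\le C(n,\ve,R,R_0)$, so $I_{2,k}=O(\rho_k^n)\to 0$ (indeed $\rho_k\to0$: in the situation where the lemma is invoked one has $\rho_k^2=(t_k\mu_k^2|\D v_k(0)|)^{-1}$ with $t_k\mu_k^2|\D v_k(0)|\to\infty$). Hence $\limsup_{k\to\infty}I_k\le|B_{R_0}|\kappa R^{-2}$, and letting $R\to\infty$ finishes the proof.

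The only step requiring genuine care is the estimate of $I_{2,k}$ — concretely, the uniform upper bound on the blown-up functions $\eta_k$, which is where Lemma \ref{local-bound} (itself a consequence of the pointwise bound $\D v_k\le t_k|\D v_k(0)|$ read off from \eqref{def-v4}, together with the radial representation \eqref{integlap} and the vanishing of the spherical average of $\D v_k$ over $B_{|x_k|}$) and the dichotomy on the location of $x_k$ enter. Everything else is a direct transcription of the proof of Lemma \ref{int1}.
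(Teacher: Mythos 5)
Your proposal is correct and is essentially the same argument the paper has in mind: the paper simply says that the proof of Lemma~\ref{int4} is ``similar to Lemma~\ref{int1}'', and your write-up is exactly the required transcription (replacing $K=Qe^{nP}e^{nP_v}$ by $Q$ and \eqref{def-v} by \eqref{def-v4}), with the split $B_{R_0}=A_1\cup A_2$, the bound $I_{1,k}\le |B_{R_0}|\kappa R^{-2}$ from $\int Qe^{nw_k}\le\kappa$, the change of variables plus Fubini and Lemma~\ref{local-bound} for $I_{2,k}$, and then $R\to\infty$. You also sensibly record the observation that $\rho_k\to0$ is what makes $I_{2,k}=O(\rho_k^n)$ vanish, which is left implicit in the paper but holds where the lemma is invoked (Case 2 of Lemma~\ref{2nd-max4}, where $\rho_k^2=(t_k\mu_k^2|\D v_k(0)|)^{-1}\to0$).
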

  
  Now we prove a similar local uniform upper bounds as in Lemma \ref{2nd-max}.
 \begin{lem}\label{2nd-max4}
Let $(v,t)\in X\times (0,1]$ satisfy \eqref{def-v4}.  Then there exists $C>0$ (independent of $v$ and $t$)
such that $$\sup_{B_\frac18} w\leq C,\quad w:=v+c_{v}+\frac{1}{n}\log t.$$
\end{lem}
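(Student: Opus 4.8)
\textbf{Proof proposal for Lemma \ref{2nd-max4}.}

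The plan is to mimic the argument of Lemma \ref{2nd-max}, with the simplification that the relevant perturbing term is now $\frac{1}{2n}|\D v(0)||x|^2$ rather than $\frac{1}{2n}(|x|^2-|x|^4)|\D v(0)|$, so only quadratic (not quartic) corrections enter the rescaled Laplacian estimates. First I would argue by contradiction: suppose there is a sequence $w_k=v_k+c_{v_k}+\frac1n\log t_k$ with $\max_{\bar B_{1/8}}w_k=w_k(\theta_k)\to\infty$, and, exactly as before, select a point of local maxima $x_k\in B_{1/4}$ with $w_k(x_k)\ge w_k(x)$ for all $x\in B_{|x_k|}$; this selection is legitimate because $\int_{\R^n}Qe^{nw_k}dx=t_k\kappa\le\kappa$ with $Q>0$ on $B_3$ rules out $\inf_{B_{1/4}\setminus B_{1/8}}w_k\to\infty$. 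Set $\mu_k:=e^{-w_k(x_k)}$.

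Next I would split into the two cases according to whether $t_k\mu_k^2|\D v_k(0)|$ stays bounded or diverges. In \textbf{Case 1} ($t_k\mu_k^2|\D v_k(0)|\to c_0\in[0,\infty)$), put $\eta_k(x)=v_k(x_k+\mu_k x)-v_k(x_k)$; differentiating \eqref{def-v4} under the integral sign gives
$$\D\eta_k(x)=C_1\frac{\mu_k^2}{\gamma_n}\int_{\R^n}\frac{Q(y)e^{nw_k(y)}}{|x_k+\mu_k x-y|^2}dy+t_k\mu_k^2|\D v_k(0)|,$$
so that $\int_{B_R}|\D\eta_k-t_k\mu_k^2|\D v_k(0)||dx\le C\kappa t_k R^{n-2}$, a cleaner bound than \eqref{lap-int} since there is no $|x|^2$ factor multiplying $|\D v_k(0)|$ inside the integral. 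Using the decay hypothesis $\int_{B_1(x)}Q(y)|x-y|^{-(n-1)}dy\to 0$ and $Qe^{\lambda|x|^2}\in L^1$ to invoke Lemma \ref{local-bound}, one gets $\eta_k\le C(R)$ on $B_R$, and then via Lemma \ref{sch} (and Lemmas \ref{odd}, \ref{L1/2conv} when $n$ is odd) $\eta_k\to\eta$ in $C^{n-1}_{loc}$ with $(-\D)^{n/2}\eta=Q(x_\infty)e^{n\eta}$, $\int e^{n\eta}<\infty$, and $\int_{B_R}|\D\eta-c_0|dx\le CR^{n-2}$. Theorem A then forces the polynomial part $P_0$ of $\eta$ to have $\D P_0$ constant, hence $P_0$ constant and $\eta=u_{\lambda,x_0}$ spherical, whence $\D\eta<0$ everywhere and so $c_0=0$. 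I would then close this case exactly as Sub-cases 1.1 and 1.2 of Lemma \ref{2nd-max}: if $|x_k|/\mu_k\le M$ then $\D\eta(y_\infty)=\lim\mu_k^2\D v_k(0)=c_0/t_\infty=0$, contradicting $\D\eta<0$; if $|x_k|/\mu_k\to\infty$, then radial symmetry lets one place $N$ disjoint bubbles on the sphere $|x|=|x_k|$, contradicting $\int_{B_1}Qe^{nw_k}dx\le\kappa$.

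In \textbf{Case 2} ($t_k\mu_k^2|\D v_k(0)|\to\infty$), choose $\rho_k>0$ with $t_k\rho_k^2\mu_k^2|\D v_k(0)|=1$, set $\psi_k(x)=v_k(x_k+\rho_k\mu_k x)-v_k(x_k)$, and estimate as above to get $\int_{B_R}|\D\psi_k-1|dx\to 0$ using Lemma \ref{int4} (the analogue of Lemma \ref{int1}) for the integral term and the crude bound $C\mu_k\rho_k\int_{B_R}|x|\,dx\to 0$ for the lower-order piece; here there is no quadratic factor $|x_k|^2$ diminishing the limit constant, which actually simplifies matters (the limit is just $1>0$). Together with Lemma \ref{local-bound} one has $(-\D)^{n/2}\psi_k=o(1)$ in $B_R$, $\psi_k(0)=0$, $\psi_k\le C(R)$, so $\psi_k\to\psi$ in $C^{n-1}_{loc}$ with $\int_{B_R}|\D\psi-1|dx=0$, forcing $\D\psi(0)=1>0$; but $\D\psi(0)=\lim\rho_k^2\mu_k^2\D v_k(x_k)\le 0$ since $x_k$ is a local max of $v_k$, a contradiction. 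The main obstacle, as in Lemma \ref{2nd-max}, is verifying the compactness input (Lemma \ref{sch}, and in odd dimensions the $L_{1/2}$-convergence of Lemma \ref{L1/2conv}) uniformly in $k$ under only the decay hypotheses on $Q$ of Theorem \ref{thm-2}; everything else is a routine adaptation of the $|x|^4$-free version of the earlier argument.
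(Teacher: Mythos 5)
Your proposal is correct and follows essentially the same approach as the paper: the paper's own proof of Lemma \ref{2nd-max4} is precisely a sketch that refers back to Lemma \ref{2nd-max}, distinguishing the same two cases based on whether $t_k\mu_k^2|\D v_k(0)|$ is bounded or diverges, using the same rescalings $\eta_k$ and $\psi_k$, and deriving the same contradictions (spherical limit with $c_0=0$ versus $\D\eta<0$ in Case 1, and $\D\psi(0)=1>0$ versus $\D\psi(0)\le0$ in Case 2). Your observation that the absence of the $|x|^4$ term makes the rescaled Laplacian estimate cleaner --- the constant to subtract is $t_k\mu_k^2|\D v_k(0)|$ with no $|x_k|^2$ correction, so no $c_1$ factor appears in the limit --- matches what the paper states with $\int_{B_R}|\D\eta_k-t_k\mu_k^2|\D v_k(0)||dx\le Ct_kR^{n-2}$ and $\int_{B_R}|\D\psi_k-1|dx=o(1)$.
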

\begin{proof}
 The proof is very similar to Lemma \ref{2nd-max}. Here we briefly sketch the proof. 
 
 We assume by contradiction that the conclusion of the lemma is false. Then there exists a sequence of $(v_k,t_k)$ and a sequence of points
 $x_k$ in $B_\frac14$ such that 
 $$w_k(x_k)\to\infty,\quad w_k\leq w_k(x_k)\text{ on } B_{|x_k|}, \quad x_k\text{ is a point of local maxima of }v_k.$$
 We set $\mu_k:=e^{-w_k(x_k)}$ and we distinguish following cases. 
 
  \medskip
\noindent
 \noindent\textbf{Case 1} Up to a subsequence $t_k\mu_k^2|\D v_k(0)|\to c_0\in [0,\infty).$ 

We set $\eta_k(x):=v_k(x_k+\mu_kx)-v_k(x_k).$ Then we have 
$$\int_{B_R}|\D\eta_k -t_k\mu_k^2|\D v_k(0)||dx\leq Ct_kR^{n-2}.$$
Now one can proceed exactly as in Case 1 in Lemma \ref{2nd-max}. 

 \medskip
\noindent
\noindent\textbf{Case 2} Up to a subsequence $t_k\mu_k^2|\D v_k(0)|\to \infty.$ 

We set $\psi_k(x)=v_k(x_k+\rho_k\mu_kx)-v_k(x_k)$ where $\rho_k$ is determined by $t_k\rho_k^2\mu_k^2|\D v_k(0)|=1$.
Then by Lemma \ref{int4} $$\int_{B_R}|\D\psi_k-1|dx=o(1),\quad\text{as }k\to\infty.$$ 
Similar to Case 2 in Lemma \ref{2nd-max} one can get a contradiction.
\end{proof}

With the help of Lemma \ref{2nd-max4} we prove

  \begin{lem}\label{cpt-4}
   There exists a constant $M>0$ such that for all $(v,t)\in X\times(0,1]$ satisfying \eqref{def-v4} we have $\|v\|\leq M$.
  \end{lem}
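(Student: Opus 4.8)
The plan is to mirror the argument that led from Lemma \ref{2nd-max} to Lemma \ref{uniform-v} in the proof of Theorem \ref{thm-1}, and then to the final $\|v\|_X\le M$ bound obtained in the proof of Theorem \ref{thm-1} via Lemma \ref{T-compact}. First I would use Lemma \ref{2nd-max4} to get $\sup_{B_{1/8}}w\le C$ with $w=v+c_v+\frac1n\log t$, uniformly in $(v,t)$. Differentiating \eqref{def-v4} under the integral sign and splitting the domain into $B_{1/8}$ and its complement, I would bound $|\D v(0)|$ by $C\sup_{B_{1/8}}Q\int_{B_{1/8}}|y|^{-2}dy$ (finite since $n\ge 3$) plus $C\int_{B_{1/8}^c}Qe^{nw}dy\le C\kappa$, giving $|\D v(0)|\le C$ independent of $(v,t)$. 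Here the hypothesis on $Q$ in Theorem \ref{thm-2} is what makes the tail integral controlled by $\kappa$.

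Next, from \eqref{def-v4} one has $\D v(x)< \frac{t}{2n}|\D v(0)|\D(|x|^2)=\frac{t}{n}|\D v(0)|\le C$ for all $x$ (the analogue of \eqref{Deltav}), so using the radial representation \eqref{integlap} together with $\sup_{B_{1/8}}w\le C$ I would propagate the upper bound to get $w(x)\le w(0)+C|x|^2\le C$ on, say, $B_2$, and then the global upper bound $w\le C$ on $\R^n$ follows as in Lemma \ref{4-1}/Lemma \ref{lim-sup} from the fact that $\sup_{B_1^c}v\le\inf_{B_1}v$ (which, as in Lemma \ref{lim-sup}, comes from $\D v<\frac{t}{2n}|\D v(0)|\D(|x|^2)$ and \eqref{integlap}). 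This yields a constant $C$, independent of $(v,t)$, with $v(x)+c_v+\frac1n\log t\le C$ on $\R^n$, hence $Q e^{n(v+c_v+\frac1n\log t)}\le C Q$ pointwise, which together with $\int Q e^{n(v+c_v)}dx=\kappa$ gives the integrability input $Qe^{n(v+c_v+\frac1n\log t)}\le C\min\{Q,\, Q e^{\lambda|x|^2}/(\text{tail})\}$ type control needed below.

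Finally, to bound $\|v\|_X$ I would estimate $D^\beta v$ for $1\le|\beta|\le n-1$ by differentiating \eqref{def-v4}: $|D^\beta v(x)|\le C\int_{\R^n}|x-y|^{-|\beta|}Q(y)e^{n(v(y)+c_v+\frac1n\log t)}dy + C|\D v(0)||D^\beta(|x|^2)|$. The second term is bounded by $C$ (and vanishes for $|\beta|\ge 3$). For the integral term, split into $|x-y|<1$ and $|x-y|\ge 1$: on the near region use $Q e^{nw}\le CQ$ and the hypothesis $\int_{B_1(x)}Q(y)|x-y|^{-(n-1)}dy\to 0$ (so in particular this is bounded, and for $|\beta|\le n-1$ the singularity $|x-y|^{-|\beta|}\le |x-y|^{-(n-1)}$ is dominated); on the far region $|x-y|^{-|\beta|}\le 1$ and $\int Q e^{nw}\le C$. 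For the $|\beta|\le 1$ weighted norms one similarly checks $(1+|x|)^{|\beta|-2}|D^\beta v(x)|\le C$: for $|\beta|=1$ the factor $(1+|x|)^{-1}$ kills the linear growth coming from the $|\D v(0)||x|$ term, and the $\log$-type growth of $v$ itself (from the $\log\frac1{|x-y|}$ kernel against an $L^1$, rapidly-decaying-weighted mass) is $o(|x|^2)$; for $|\beta|=0$ one uses $|v(x)|\le C\int|\log|x-y||Q e^{nw}dy\le C\log(2+|x|)\le C(1+|x|)^2$. Assembling these gives $\|v\|_X\le M$ with $M$ independent of $(v,t)$. The main obstacle is the $|\beta|=1$ weighted estimate: one must show the linear-in-$|x|$ contribution of $\frac{1}{2n}|\D v(0)||x|^2$ differentiated once, namely $\frac1n|\D v(0)||x|$, is absorbed by the weight $(1+|x|)^{-1}$ — which it is, since $|\D v(0)|\le C$ — and that the kernel term contributes only $O(1)+O(\log|x|/|x|)$; this is exactly the point where the decay hypotheses on $Q$ (through $c_v$ and the localized singular integral condition) are essential, and it should be handled as in the corresponding step of Lemma \ref{T-compact}.
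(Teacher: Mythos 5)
The first half of your argument is on track: the uniform bound $|\Delta v(0)|\le C$ is obtained exactly as the paper does, by differentiating \eqref{def-v4} at the origin, splitting the singular integral over $B_{1/8}$ and its complement, and invoking Lemma \ref{2nd-max4} together with $\int Q e^{nw}dx\le\kappa$. However, the crucial next step contains a genuine error. You claim that, as in Lemma \ref{lim-sup}, one obtains $\sup_{B_1^c}v\le\inf_{B_1}v$ and hence a global bound $w\le C$ on $\R^n$. This is false in the setting of Theorem \ref{thm-2}. In the proof of Lemma \ref{lim-sup} the inequality $\sup_{B_1^c}v\le\inf_{B_1}v$ rests on the fact that $\Delta v(x)<\frac{t}{2n}|\Delta v(0)|\,\Delta(|x|^2-|x|^4)$, and the $-|x|^4$ term makes this quantity (and hence $v'(r)$) eventually negative, so that $v$ decays. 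Here the extra term in $T$ is $+\frac{1}{2n}|\Delta v(0)|\,|x|^2$ with no compensating $-|x|^4$, so the correct conclusion is only that $\xi:=v-\frac{t}{2n}|\Delta v(0)||x|^2$ is superharmonic and radially non-increasing; $v$ itself (and therefore $w$) may and generically does grow like $|x|^2$. Indeed the paper's proof stops at the quadratic-growth estimate $w(x)\le\lambda(1+|x|^2)$ with $\lambda$ uniform in $(v,t)$, and never asserts a global $L^\infty$ bound on $w$.

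This gap propagates into your final estimates: the near-region bound \emph{``$Qe^{nw}\le CQ$''} is not available, so your control of $\int_{B_1(x)}\frac{Q(y)e^{nw(y)}}{|x-y|^{|\beta|}}dy$ via the hypothesis $\int_{B_1(x)}\frac{Q(y)}{|x-y|^{n-1}}dy\to 0$ does not close. The correct replacement, which is what the paper uses, is $Qe^{nw}\le CQe^{\lambda|y|^2}$ with $\lambda$ uniform, and then the hypothesis $\int_{\R^n}Qe^{\lambda|x|^2}dx<\infty$ (valid for \emph{every} $\lambda>0$) is what keeps the resulting integrals finite. You should also correct the minor slip $\Delta(|x|^2)=2n$, so the pointwise Laplacian bound reads $\Delta v(x)<t|\Delta v(0)|$, not $\frac{t}{n}|\Delta v(0)|$. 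To repair the proof: replace the incorrect global bound $w\le C$ by the quadratic-growth bound $w(x)\le\lambda(1+|x|^2)$ obtained from the monotonicity of $\xi$ and $\sup_{B_{1/8}}w\le C$, and carry the factor $e^{\lambda|y|^2}$ through all the subsequent kernel estimates, appealing to the super-Gaussian integrability of $Q$.
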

  \begin{proof}
  Let $(v,t)\in X\times(0,1]$ satisfies \eqref{def-v4}. We set $w:=v+c_v+\frac1n\log t$.  
  
  First we show that $|\D v(0)|\leq C$ for some $C>0$ independent of $v$ and $t$. Indeed,   
   differentiating under integral sign, from \eqref{def-v4}, and together with Lemma \ref{2nd-max4},  we get 
   \begin{align*}
    |\D v(0)|(1+t)&\leq C\int_{\R^n}\frac{1}{|y|^2}Q(y)e^{nw(y)}dy\\
    &=C\int_{B_\frac18}\frac{1}{|y|^2}Q(y)e^{nw(y)}dy+C\int_{B_\frac18^c}\frac{1}{|y|^2}Q(y)e^{nw(y)}dy\\
    &\leq C\int_{B_\frac18}\frac{1}{|y|^2}Q(y)dy+C\kappa\\
    &\leq C.
   \end{align*}
Hence $|\D v(0)|\leq C$. 

We define a function $\xi(x):=v(x)-\frac{t}{2n}|\D v(0)||x|^2$. Then $\xi$  is monotone decreasing on $(0,\infty)$, which follows from 
  the fact that $\D\xi\leq 0$. Therefore, 
  \begin{align*}
   w(x)&=\xi(x)+c_v+\frac1n\log t+\frac{t}{2n}|\D v(0)||x|^2\\
   &\leq \xi(\frac18)+c_v+\frac1n\log t+\frac{t}{2n}|\D v(0)||x|^2\\
   &\leq w(\frac18)+\frac{t}{2n}|\D v(0)||x|^2.
  \end{align*}
  Hence, $w(x)\leq \lambda(1+|x|^2)$  on $\R^n$ for some $\lambda>0$ independent of $v$ and $t$. Using this in \eqref{def-v4} one can show that 
     $$    |v(x)|\leq C \log(2+|x|)+C|x|^2,$$
     and   differentiating under integral sign, from \eqref{def-v4}
 $$
  |D^\beta v(x)|\leq C\int_{\R^n}\frac{1}{|x-y|^{|\beta|}}Q(y)e^{\lambda(1+|y|^2)}dy+C|D^\beta |x|^2|,\quad 0<|\beta|\leq n-1.\\
 $$
The lemma follows easily.
    \end{proof}

  \medskip\noindent\emph{Proof of Theorem \ref{thm-2}}          
By Schauder fixed point theorem (see Lemma \ref{leray}), the operator $T$ has a fixed point, thanks to Lemma \ref{cpt-4}. Let $v$ be a fixed point of $T$.
Then $u=v+c_v$ is a solution of \eqref{eq-1}. 

This finishes the proof of Theorem \ref{thm-2}.
\hfill $\square$

\medskip
    
    Now we prove the non existence results stated in the introduction.
   \begin{lem} \label{non}
     Let $n\in\{3,4\}$. Let $Q\in C^1_{rad}(\R^n)$ be monotone decreasing. We assume that 
     $$Q(x)= \delta e^{-\lambda|x|^2}\quad\text {for some } \delta>0\text{ and } \lambda>0,$$
     or $$Q(x)=e^{\xi(x)},\quad |x\cdotp\nabla Q(x)|\leq C,\quad \frac{\xi(x)}{|x|^2}\xrightarrow{|x|\to\infty}0.$$
    Then there is no radially symmetric solution to \eqref{eq-1} with  $\kappa>\Lambda_1$.
 \end{lem}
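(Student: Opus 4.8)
The plan is to argue by contradiction: suppose $u$ is a radially symmetric solution of \eqref{eq-1} with $\kappa>\Lambda_1$, and derive a contradiction with the known classification and the sharp total-curvature bound available in dimensions $3$ and $4$. The starting point is the Green's representation of $u$. Since $Qe^{nu}\in L^1(\R^n)$ and $n\in\{3,4\}$, one writes
\[
u(x)=\frac{1}{\gamma_n}\int_{\R^n}\log\left(\frac{1}{|x-y|}\right)Q(y)e^{nu(y)}dy+p(x),
\]
where $p$ is a polynomial of degree at most $n-1$, bounded from above (this is exactly the content of Theorem A, or rather its proof, applied to the radial solution $u$). From radial symmetry and $\D p\le 0$ one gets that $p$ is a nonincreasing radial function, and in particular the integral term $v(x):=u(x)-p(x)$ satisfies $v(x)=-\frac{2\kappa}{\Lambda_1}\log|x|+O(1)$ as $|x|\to\infty$. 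The key quantitative input is that in dimension $3$ and $4$ Theorem A forces $\kappa\le\Lambda_1$ for solutions of \eqref{eq-2}; here $Q$ is not constant, so one cannot invoke Theorem A directly, and the heart of the argument is to recover an analogue of the bound $\kappa\le\Lambda_1$ using the monotonicity and decay of $Q$.

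The main step is a Pohozaev-type (or direct integration) identity exploiting that $Q$ is nonincreasing. First I would differentiate the radial equation: writing $h(r):=\D^{\lceil n/2\rceil -1}u$ appropriately, or more simply working with $\D u$ since $n\le 4$ means $n/2\le 2$, one has for $n=4$ that $\D^2 u=Qe^{nu}\ge 0$, and for $n=3$ the nonlocal operator still yields $\D u$ concave-type control via the representation above. The crucial observation is that along the radial variable,
\[
\frac{d}{dr}\left(\int_{B_r}Q e^{nu}dx\right)=\omega_{n-1}r^{n-1}Q(r)e^{nu(r)},
\]
and one integrates by parts against the test factor $x\cdot\nabla u$ as in the usual Pohozaev argument for the $Q$-curvature equation. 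Because $Q$ is radial and monotone decreasing, the term involving $x\cdot\nabla Q$ has a favorable sign ($x\cdot\nabla Q\le 0$), so the Pohozaev identity degrades the equality $\kappa=\Lambda_1$ (which would hold for the pure power/constant case with a spherical profile) into an inequality $\kappa\le\Lambda_1$ — unless the correction terms coming from the polynomial part $p$ and from the decay of $Q$ at infinity blow up. This is where the hypotheses on $Q$ enter: the assumption $|x\cdot\nabla Q|\le C$ (equivalently $|x\cdot\nabla\xi|$ bounded, or the explicit Gaussian form) guarantees that the boundary terms at $r\to\infty$ in the Pohozaev identity vanish, since $u(x)\le p(0)-c\log|x|$ grows at most logarithmically up to the (nonpositive) polynomial part and $Qe^{nu}$ is integrated against quantities of controlled growth. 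The Gaussian case $Q=\delta e^{-\lambda|x|^2}$ is the cleanest: there $Qe^{nu}$ decays super-polynomially once we know $u$ grows at most like $C|x|^2$ (which follows from $\D u\le C$, itself a consequence of the representation plus $|\D v(0)|<\infty$ type bounds), so every boundary term is trivially zero.

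The step I expect to be the main obstacle is making the Pohozaev boundary terms vanish rigorously in the non-Gaussian case, i.e. under only $|x\cdot\nabla Q|\le C$ and $\xi(x)=o(|x|^2)$. One must show that $u$ does not grow faster than a quadratic with small coefficient, so that $nu+\xi$ stays controlled and $\int_{\partial B_r}(\cdots)\to 0$ along a sequence $r_j\to\infty$; this requires combining the logarithmic behavior of the Green's term with the sign of $\D p$ and an argument that $p$ cannot have a positive-definite quadratic part — which is precisely the $n\ge 5$ versus $n\le 4$ dichotomy, since in dimension $\le 4$ the polynomial $p$ has degree $\le n-1\le 3$ and being bounded above and radial it can contribute at worst a nonpositive quadratic term. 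Once the boundary terms are killed, the Pohozaev identity reads schematically
\[
\left(n-\frac{2\kappa}{\Lambda_1}\cdot\text{(something)}\right)\kappa \;\le\; 0 \quad\text{or}\quad \kappa\big(\kappa-\Lambda_1\big)\le 0,
\]
forcing $\kappa\le\Lambda_1$, contradicting $\kappa>\Lambda_1$. I would present the Gaussian case first as a warm-up and then indicate the modifications needed for the general decay hypothesis, citing the asymptotic expansion \eqref{behavior} from Theorem A for the control of $u$ at infinity.
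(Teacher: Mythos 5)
Your overall strategy matches the paper's: argue by contradiction, decompose $u$ into a log-integral part $v$ and a "polynomial" remainder $h:=u-v$, and run a Pohozaev-type identity in which the monotonicity of $Q$ supplies the decisive sign. However, there are two concrete gaps in your execution.

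First, you need to pin down $h$ to be \emph{exactly} $c_1+c_2|x|^2$, and the way you argue this ("degree $\le n-1\le 3$, radial, bounded above") is not rigorous, especially for $n=3$. The correct route is: $h=u-v$ satisfies $(-\D)^{n/2}h=0$ in $\R^n$. For $n=4$ this is biharmonicity plus radial symmetry. For $n=3$, $(-\D)^{3/2}h=0$ is a \emph{nonlocal} condition; the paper first checks $\D h\in L_{1/2}(\R^3)$ and then invokes a Liouville-type result (Lemma~15 of \cite{JMMX}) to conclude $\D h\equiv$ const, and only then deduces $h=c_1+c_2|x|^2$ from radial symmetry. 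Without this step you cannot justify that the remainder is a pure quadratic rather than, say, a degree-$3$ radial polynomial (there is no such thing of course, but the argument must rule out sublinear or non-polynomial $\frac n2$-harmonic pieces as well).

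Second, your Pohozaev step is organized around showing that boundary terms vanish at infinity, which is a less efficient path and leaves the final inequality unclear. The paper instead reformulates the equation as a pure integral equation for $v$ with the modified kernel $K:=Qe^{nh}$ and cites the identity from \cite[Theorem~2.1]{Xu}:
$$\frac{\kappa}{\gamma_n}\left(\frac{\kappa}{\gamma_n}-2\right)=\frac{1}{\gamma_n}\int_{\R^n}\big(x\cdot\nabla K(x)\big)e^{nv(x)}\,dx.$$
Since $\kappa>\Lambda_1=2\gamma_n$, the left side is strictly positive, forcing $x\cdot\nabla K>0$ somewhere. You then compute $x\cdot\nabla K=\big(x\cdot\nabla\xi+2nc_2|x|^2\big)K$ (resp.\ $(-2\lambda+2nc_2)|x|^2K$ in the Gaussian case), and the contradiction comes from two separate sign facts: $x\cdot\nabla\xi\le 0$ by the monotone-decrease hypothesis, and $c_2\le 0$ (non-Gaussian case) or $nc_2<\lambda$ (Gaussian case), both of which are pinned down by the integrability $Qe^{nu}=Ke^{nv}\in L^1$ together with the fact that $v$ decays only logarithmically. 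Your write-up gestures at "$p$ nonincreasing" and "$x\cdot\nabla Q\le 0$" but never separates the contribution of $h$ from that of $Q$ inside $x\cdot\nabla K$, nor states how the integrability of $Qe^{nu}$ fixes the sign of $c_2$; these are precisely where the hypotheses $\xi(x)=o(|x|^2)$ and the Gaussian structure enter, and you should make that explicit.
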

 \begin{proof}
  We assume by contradiction that there is a solution  $u$ to \eqref{eq-1} with $\kappa>\Lambda_1$, where $Q$ satisfies the 
  hypothesis of the lemma.
  
  We set $$v(x):=\frac{1}{\gamma_n}\int_{\R^n}\log\left(\frac{|y|}{|x-y|}\right)Q(y)e^{nu(y)}dy,\quad h:=u-v.$$
  Then $v(x)=-\frac{2\kappa}{\Lambda_1}\log|x|+o(\log|x|)$ as $|x|\to \infty$. Notice that
     $h$ is radially symmetric and $(-\D)^\frac n2h=0$ on $\R^n$. Therefore,  $h(x)=c_1+c_2|x|^2$ for some $c_1, c_2\in\R$. This follows easily if $n=4$. 
     For $n=3$, first notice that $\D h\in L_\frac12(\R^3)$. Hence, by  \cite[Lemma 15]{JMMX} 
  $\D h\equiv const$. Now radial symmetry of $h$ implies that  $h(x)=c_1+c_2|x|^2$.

      From a Pohozaev type identity in   \cite[Theorem 2.1]{Xu} we get
  \begin{align}\label{nabla}
   \frac{\kappa}{\gamma_n}\left(\frac{\kappa}{\gamma_n}-2\right)=\frac{1}{\gamma_n}\int_{\R^n}\left(x\cdotp\nabla K(x)\right)e^{nv(x)}dx,\quad K:=Qe^{nh}.
  \end{align}
Since $\kappa>\Lambda_1=2\gamma_n$, from \eqref{nabla} we deduce that $x\cdotp\nabla K(x)>0$ for some $x\in\R^n$. 
This implies that for $Q=\delta e^{-\lambda|x|^2}$ we must have $nc_2-\lambda>0$, which contradicts to the fact that  $Qe^{nu}\in L^1(\R^n)$. 
For $Q=e^{\xi}$, using that $Qe^{nu}\in L^1(\R^n)$ and that $\xi(x)=o(|x|^2)$ at infinity, one has $c_2\leq 0$.
Therefore, $x\cdotp\nabla K(x)\leq 0$ in $\R^n$, a contradiction.
 \end{proof}

Proof of the following lemma is similar to Lemma \ref{non}.
  
 \begin{lem} \label{non2}
     Let $\kappa\geq \Lambda_1$. Let $P$ be a non-constant and non-increasing radially symmetric polynomial of degree at most $n-1$. 
     Then there is no solution $u$ to \eqref{eq-2} (with $n\geq 3$) such that $u$ has the asymptotic behavior given by 
     $$u(x)=-\frac{2\kappa}{\Lambda_1}\log|x|+P(x)+o(\log|x|),\quad\text{as }|x|\to\infty.$$
 \end{lem}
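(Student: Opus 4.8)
The plan is to run the argument of Lemma~\ref{non} with $Q\equiv(n-1)!$. Suppose, for contradiction, that $u$ solves \eqref{eq-2} with $\kappa\ge\Lambda_1$ and has the stated asymptotic behavior, with $P$ non-constant, non-increasing, radially symmetric of degree $\le n-1$. As in Lemma~\ref{non}, set
$$v(x):=\frac{1}{\gamma_n}\int_{\R^n}\log\left(\frac{|y|}{|x-y|}\right)(n-1)!\,e^{nu(y)}dy,\qquad h:=u-v,$$
so that $v(x)=-\frac{2\kappa}{\Lambda_1}\log|x|+o(\log|x|)$ as $|x|\to\infty$, and hence $h(x)=P(x)+o(\log|x|)$; in particular $h$ grows at most like $|x|^{n-1}$. (For $n$ odd one also checks, exactly as in Lemma~\ref{non}, that $\D^\frac{n-1}{2}v\in L_\frac12(\R^n)$ from the integral representation of $v$.)

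Next I would show that $h$ is a polynomial with $\nabla h=\nabla P$. Since $(-\D)^\frac n2v=(n-1)!\,e^{nu}=(-\D)^\frac n2u$, the difference satisfies $(-\D)^\frac n2h=0$ in $\R^n$. For $n$ even, this together with the polynomial growth of $h$ forces $h$ to be a polynomial by iterated Liouville for polyharmonic functions. For $n$ odd, one argues as in the proof of Lemma~\ref{non}: $u$ and $v$ both have $\D^\frac{n-1}{2}(\cdot)\in L_\frac12(\R^n)$, hence so does $h$, and then $(-\D)^\frac12\bigl(\D^\frac{n-1}{2}h\bigr)=0$ together with \cite[Lemma 15]{JMMX} forces $\D^\frac{n-1}{2}h$ to be constant; writing $h$ as a particular polynomial solution plus a polyharmonic entire function of polynomial growth then shows $h$ is a polynomial of degree $\le n-1$. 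In either case $h-P$ is a polynomial which is $o(\log|x|)$ at infinity, hence constant; writing $h=P+c$ gives $\nabla h=\nabla P$.

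Finally I would apply the Pohozaev-type identity of \cite[Theorem~2.1]{Xu}, as in \eqref{nabla}, to the solution $v$ of $(-\D)^\frac n2v=Ke^{nv}$ with $K:=(n-1)!\,e^{nh}=(n-1)!\,e^{n(P+c)}$:
$$\frac{\kappa}{\gamma_n}\left(\frac{\kappa}{\gamma_n}-2\right)=\frac{1}{\gamma_n}\int_{\R^n}\bigl(x\cdot\nabla K(x)\bigr)e^{nv(x)}dx.$$
Here $\nabla K=nK\,\nabla P$, so writing $P(x)=p(|x|)$ we have $x\cdot\nabla K(x)=nK(x)\,|x|\,p'(|x|)$. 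Since $P$ is non-increasing, $p'\le0$, so $x\cdot\nabla K\le0$ everywhere; since $P$ is non-constant, $p'$ is a nonzero polynomial, so $x\cdot\nabla K<0$ for a.e.\ $x$. Moreover $Ke^{nv}=(n-1)!\,e^{nu}>0$ is integrable and $(x\cdot\nabla K)e^{nv}=n\,|x|\,p'(|x|)(n-1)!\,e^{nu}\in L^1$, since $P$ non-constant non-increasing forces $P(x)\to-\infty$ and hence $e^{nu}$ decays super-polynomially. Therefore the right-hand side is strictly negative, while $\kappa\ge\Lambda_1=2\gamma_n$ makes the left-hand side $\ge0$, a contradiction. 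The step I expect to require the most care is the polynomial rigidity of $h$ in odd dimension — making rigorous that an entire distributional solution of $(-\D)^\frac n2h=0$ of growth $O(|x|^{n-1})$ is a polynomial — though this is essentially contained in the proof of Lemma~\ref{non}; one should also double-check that $(u,v)$ satisfies the hypotheses of \cite[Theorem~2.1]{Xu}, which follows from the prescribed asymptotics of $u$ and the logarithmic behavior of $v$.
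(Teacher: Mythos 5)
Your proposal is correct and follows essentially the same route the paper indicates (the paper only remarks that the proof is ``similar to Lemma~\ref{non}''): decompose $u=v+h$, show $h$ is a polynomial so that $h=P+c$, and then apply the Pohozaev/Kazdan–Warner identity of Xu with $K=(n-1)!e^{nh}$ to obtain a sign contradiction with $\kappa\ge\Lambda_1$. You have also correctly supplied the extra details needed to go from $n\in\{3,4\}$ in Lemma~\ref{non} to all $n\ge3$ here, namely the polyharmonic Liouville argument (and, for $n$ odd, the use of $\D^{(n-1)/2}h\in L_{1/2}$ together with \cite[Lemma 15]{JMMX}) to conclude $h$ is a polynomial of degree at most $n-1$.
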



\appendix
\section{Appendix}

\begin{lem}[Theorem 11.3 in \cite{G-T}]\label{leray}
 Let $T$ be a compact mapping of a Banach space $X$ into itself, and suppose that there exists a constant $M$ such that
 $$\|x\|_{X}<M$$
 for all $x\in X$ and $t\in (0,1]$ satisfying $tTx=x$. Then $T$ has a fixed point.
\end{lem}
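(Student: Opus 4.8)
The plan is to run the argument of Lemma \ref{non} almost verbatim, with the prescribed curvature there (which carried the decay) replaced by the constant $(n-1)!$ and with the polynomial $P$ from the prescribed asymptotics taking over the role played there by $h=c_1+c_2|x|^2$. So suppose, for contradiction, that $u$ solves \eqref{eq-2} with $\kappa\ge\Lambda_1$ and satisfies $u(x)=-\frac{2\kappa}{\Lambda_1}\log|x|+P(x)+o(\log|x|)$ as $|x|\to\infty$. First I would introduce the logarithmic potential
$$v(x):=\frac{1}{\gamma_n}\int_{\R^n}\log\left(\frac{|y|}{|x-y|}\right)(n-1)!\,e^{nu(y)}dy,\qquad h:=u-v,$$
so that, exactly as in Lemma \ref{non}, $v(x)=-\frac{2\kappa}{\Lambda_1}\log|x|+o(\log|x|)$ as $|x|\to\infty$, while $(-\D)^\frac n2h=(-\D)^\frac n2u-(-\D)^\frac n2v=0$ on $\R^n$ and $(-\D)^\frac n2u=Ke^{nv}$ with $K:=(n-1)!e^{nh}$.

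Next I would show $h=P+c$ for some constant $c\in\R$. Since $v$ is smooth and $h=u-v$ grows at most like $P(x)+o(\log|x|)$, the same reasoning that produces the polynomial part $P$ in Theorem A — i.e. the Liouville theorem for $(-\D)^\frac n2$-harmonic functions of polynomial growth (classical polyharmonic Liouville when $n$ is even; for $n$ odd argue as in the case $n=3$ of Lemma \ref{non}, using $\D^{(n-1)/2}h\in L_\frac12(\R^n)$ and \cite[Lemma 15]{JMMX}) — forces $h$ to be a polynomial of degree at most $n-1$. Then $h-P$ is a polynomial with $(h-P)(x)=o(\log|x|)$ at infinity, hence constant; thus $h=P+c$, and in particular $K=(n-1)!e^{n(P+c)}$ is radially symmetric.

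Now I would invoke the Pohozaev-type identity of \cite[Theorem 2.1]{Xu}, applied to $u=v+h$ with $(-\D)^\frac n2u=Ke^{nv}$, which gives
$$\frac{\kappa}{\gamma_n}\left(\frac{\kappa}{\gamma_n}-2\right)=\frac{1}{\gamma_n}\int_{\R^n}\big(x\cdotp\nabla K(x)\big)e^{nv(x)}dx,\qquad K=(n-1)!e^{n(P+c)}.$$
Here $x\cdotp\nabla K=n\,K\,(x\cdotp\nabla P)$, and since $P$ is radially symmetric and non-increasing we have $x\cdotp\nabla P(x)\le 0$ for every $x$, so the right-hand side is $\le 0$. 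On the other hand $\frac{\kappa}{\gamma_n}-2=\frac{\kappa-\Lambda_1}{\gamma_n}\ge 0$ because $\Lambda_1=2\gamma_n$ and $\kappa\ge\Lambda_1$, so the left-hand side is $\ge 0$. If $\kappa>\Lambda_1$ the left-hand side is strictly positive — a contradiction. If $\kappa=\Lambda_1$ the identity forces $x\cdotp\nabla K\equiv 0$, hence $K$ constant, hence $P$ constant, contradicting the hypothesis (alternatively, for $\kappa=\Lambda_1$ one may simply quote the rigidity in Theorem A: $\kappa=\Lambda_1$ implies $u$ spherical, so $P$ is constant).

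The main obstacle is the second paragraph: making the step ``$h$ is a genuine polynomial'' rigorous for every $n\ge 3$, in particular for the nonlocal operator $(-\D)^\frac n2$ when $n$ is odd, and then checking that matching against the prescribed expansion pins $h$ down to $P$ up to an additive constant. Once that is in place the rest is just sign-bookkeeping in the Pohozaev identity, as in Lemma \ref{non}; I would also record, as there, that $e^{nv}$ decays like $|x|^{-2n\kappa/\Lambda_1}$ and $e^{nP}\to0$ at infinity (since $P$ is non-constant and non-increasing, its leading coefficient is negative), so the right-hand integral converges — but this is routine.
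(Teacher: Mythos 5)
Your proposal does not prove the statement in question. Lemma \ref{leray} is the Leray--Schauder (Schaefer) fixed point theorem: $T$ is a compact mapping of a Banach space $X$ into itself, one assumes the a priori bound $\|x\|_X<M$ for every solution of $tTx=x$ with $t\in(0,1]$, and the conclusion is that $T$ has a fixed point. What you have written is instead a proof of Lemma \ref{non2} -- the non-existence of solutions to \eqref{eq-2} with $\kappa\geq\Lambda_1$ and prescribed non-constant, non-increasing polynomial asymptotics -- carried out by adapting the Pohozaev-identity argument of Lemma \ref{non}. Nothing in your argument involves compact operators, the homotopy parameter $t$, or fixed points, so it cannot serve as a proof of Lemma \ref{leray}; the entire content is aimed at a different statement.

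For the record, the paper gives no proof of Lemma \ref{leray}: it is quoted as Theorem 11.3 of \cite{G-T}. If a self-contained proof were wanted, the standard route is the radial retraction: set $T^*x:=Tx$ when $\|Tx\|_X\leq M$ and $T^*x:=M\,Tx/\|Tx\|_X$ otherwise; then $T^*$ is a compact map of the closed ball of radius $M$ into itself, so Schauder's theorem yields a fixed point $x$ of $T^*$. If $\|Tx\|_X>M$, then $x=tTx$ with $t=M/\|Tx\|_X\in(0,1)$ and $\|x\|_X=M$, contradicting the hypothesis $\|x\|_X<M$; hence $T^*x=Tx=x$ and $T$ has a fixed point. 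Your actual argument (for Lemma \ref{non2}) follows the paper's indication that the proof is similar to Lemma \ref{non}, but assessing it is not the task here, and the delicate step you flag -- showing $h:=u-v$ is a polynomial and matching it with $P$ up to a constant for all $n\geq 3$ -- would need to be done carefully there.
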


\begin{lem}[\cite{LM}]
 Let $\D^mh=0$ in $B_{4R}\subset\R^n$. For any $x\in B_R$ and $0<r<R-|x|$ we have 
 \begin{align}\label{piz}
  \frac{1}{|B_r|}\int_{B_r(x)}h(z)dz=\sum_{i=0}^{m-1}c_ir^{2i}\D^ih(x),
 \end{align} 
 where
  $$c_0=1,\quad c_i=c(i,n)>0,\quad\text{for }i\geq 1.$$ Moreover, for every $k\geq0$ there exists $C=C(k,R)>0$ such that 
 \begin{align}\label{har-est}
  \|h\|_{C^k(B_R)}\leq C\|h\|_{L^1(B_{4R})}.
 \end{align}
\end{lem}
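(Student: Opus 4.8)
The plan is to reduce everything to the radial ODE satisfied by the spherical averages of $h$, and then to recover pointwise bounds from \eqref{piz} by a Vandermonde inversion.

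\textbf{Step 1 (set-up and the ODE).} By translating we may take $x=0$. Since $\D^m h=0$ is elliptic with constant coefficients, $h\in C^\infty(B_{4R})$, and for $0<\rho<4R$ we may let $\bar h(\rho)$ denote the average of $h$ over $\partial B_\rho$. The classical Darboux identity $\overline{\D g}=L\bar g$, with $L:=\partial_\rho^2+\frac{n-1}{\rho}\partial_\rho$ the radial Laplacian, gives $L^m\bar h=\overline{\D^m h}=0$ on $(0,4R)$ and $L^k\bar h(0)=\D^k h(0)$ for every $k$ (the average of a continuous function at the centre being its value there). Next, the solutions of $L^m u=0$ that stay bounded near $\rho=0$ are exactly $\mathrm{span}\{1,\rho^2,\dots,\rho^{2(m-1)}\}$: by induction on $m$, if $L^mu=0$ with $u$ bounded then $Lu=\sum_{k=0}^{m-2}\beta_k\rho^{2k}$, and since $L\rho^{2(k+1)}=2(k+1)(2k+n)\rho^{2k}$ with $2(k+1)(2k+n)\neq0$ for $n\geq1$, subtracting a suitable $\sum\tilde\beta_k\rho^{2(k+1)}$ makes $Lu=0$, so $u$ differs from an element of $\mathrm{span}\{\rho^2,\dots,\rho^{2(m-1)}\}$ by a constant. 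Hence $\bar h(\rho)=\sum_{k=0}^{m-1}b_k\rho^{2k}$; applying $L^k$ and letting $\rho\to0$ gives $\D^kh(0)=b_k\,d_k$ with $d_k:=\prod_{t=1}^{k}2t(2t+n-2)>0$ (and $d_0:=1$), i.e. $b_k=\D^kh(0)/d_k$.

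\textbf{Step 2 (proof of \eqref{piz}).} Writing the solid average in polar coordinates,
$$\frac{1}{|B_r|}\int_{B_r}h\,dz=\frac{n}{\omega_{n-1}r^n}\int_0^r\omega_{n-1}\rho^{n-1}\bar h(\rho)\,d\rho=\frac{n}{r^n}\sum_{k=0}^{m-1}b_k\frac{r^{n+2k}}{n+2k}=\sum_{k=0}^{m-1}\frac{n}{(n+2k)d_k}\,\D^kh(0)\,r^{2k}.$$
Translating back to the centre $x$ this is \eqref{piz} with $c_k=\frac{n}{(n+2k)d_k}$; in particular $c_0=1$ and $c_k>0$ for $k\geq1$, the positivity being immediate from the closed form. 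The same computation is valid for any $0<r<4R-|x|$, which is what Step 3 uses.

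\textbf{Step 3 (proof of \eqref{har-est}).} Fix $x\in B_{2R}$ and choose $m$ distinct radii $\rho_1,\dots,\rho_m\in(R,2R)$, so that $B_{\rho_s}(x)\subset B_{4R}$ and, by Step 2, $\frac{1}{|B_{\rho_s}|}\int_{B_{\rho_s}(x)}h=\sum_{k=0}^{m-1}c_k\rho_s^{2k}\D^kh(x)$ for each $s$. The Vandermonde matrix $(\rho_s^{2k})_{s,k}$ is invertible with inverse depending only on $m,n,R$, so each $\D^kh(x)$, $0\leq k\leq m-1$, is a fixed linear combination of the averages $\frac{1}{|B_{\rho_s}|}\int_{B_{\rho_s}(x)}h$; in particular $|\D^kh(x)|\leq C(k,R)\sum_s\int_{B_{\rho_s}(x)}|h|\,dz\leq C(k,R)\|h\|_{L^1(B_{4R})}$ for all $x\in B_{2R}$. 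Taking $k=0$ gives $\|h\|_{L^\infty(B_{2R})}\leq C(R)\|h\|_{L^1(B_{4R})}$, and then the standard interior estimates for the elliptic operator $\D^m$ yield $\|h\|_{C^k(B_R)}\leq C(k,R)\|h\|_{L^\infty(B_{2R})}\leq C(k,R)\|h\|_{L^1(B_{4R})}$, which is \eqref{har-est}.

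The argument is elementary; the only points needing a little care are the Darboux identity together with the bookkeeping of the constants $d_k,c_k$ (and the characterisation of the bounded solutions of $L^mu=0$), and, at the end, the appeal to the interior Schauder (or $L^p$) estimates for $\D^m$ — none of which I expect to be a genuine obstacle.
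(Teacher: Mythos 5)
The paper itself gives no proof of this lemma (it is quoted from \cite{LM}), so the only issue is whether your argument is sound. Steps 2 and 3 are correct: the computation of $c_k=\frac{n}{(n+2k)d_k}$ checks out (e.g. $c_1=\frac{1}{2(n+2)}$), and the Vandermonde inversion on fixed radii $\rho_s\in(R,2R)$ (the matrix is really $(c_k\rho_s^{2k})$, still invertible since $c_k>0$) plus interior elliptic estimates give \eqref{har-est}. The genuine gap is in Step 1: the assertion that the solutions of $L^mu=0$ which are bounded near $\rho=0$ are exactly $\mathrm{span}\{1,\rho^2,\dots,\rho^{2(m-1)}\}$ is false. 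The solution space on $(0,\infty)$ is spanned by $\rho^{2k}$ and $\rho^{2k+2-n}$, $0\le k\le m-1$ (with logarithms in resonant cases), and the second branch need not be unbounded: for $n=3$, $m=2$ the function $u(\rho)=\rho$ satisfies $L^2u=0$ and is bounded, but is not an even polynomial; more generally $\rho^{2k+2-n}$ is bounded whenever $2k+2\ge n$, which happens precisely in the regime $m\sim n/2$, $n\ge 5$ in which the lemma is used in this paper. Your induction breaks at exactly this point: to apply the inductive hypothesis to $Lu$ you need $Lu$ bounded near $0$, and boundedness of $u$ alone does not give this (again $u=\rho$, $Lu=2/\rho$ in $\R^3$).

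The gap is repairable because you only apply the characterization to $u=\bar h$, which is much better than bounded: $h$ is smooth in $B_{4R}$ by elliptic regularity, so $\bar h$ is smooth on $[0,4R)$ and $L^j\bar h=\overline{\Delta^j h}$ is continuous at $0$ for every $j$. Run the induction under the hypothesis that $L^ju$ is bounded near $0$ for all $0\le j\le m-1$ (a hypothesis inherited by $Lu$), and the conclusion $\bar h(\rho)=\sum_{k=0}^{m-1}b_k\rho^{2k}$ with $b_k=\Delta^kh(0)/d_k$ follows as you wrote; alternatively, \eqref{piz} can be obtained directly from the Taylor expansion of $h$ at $x$, as in the classical proof of Pizzetti's formula, avoiding any discussion of the ODE's singular solutions. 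With that correction (and noting that for $n=1$ one would additionally need the evenness of $\bar h$, irrelevant here since $n\ge3$), the rest of your argument goes through.
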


\begin{lem}\label{sch}
 Let $R>0$ and $B_R\subset\R^n$. Let $u_k\in C^{n-1,\alpha}(\R^n)$ for some $\alpha\in (\frac12,1)$ be such that 
 $$u_k(0)=0, \quad \| u_k^+\|_{L^\infty(B_R)}\leq C,\quad  \|(-\D)^\frac n2 u_k\|_{L^\infty(B_R)}\leq C,\quad \int_{B_R}|\D u_k|dx\leq C.$$
  If $n$ is an odd integer, we also assume that  $\|\D^\frac{n-1}{2}u_k\|_{L_\frac12(\R^n)}\leq C$.  
  Then (up to a subsequence) $u_k\to u$ in $C^{n-1}(B_\frac R8)$. 
\end{lem}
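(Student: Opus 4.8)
\medskip
\noindent\emph{Proof sketch.}
The plan is to decompose $u_k=p_k+h_k$, where $p_k$ carries the inhomogeneity $f_k:=(-\D)^\frac n2u_k$ and is automatically bounded in $C^{n-1,\alpha}$ on interior balls (since $\|f_k\|_{L^\infty(B_R)}\le C$), while the remainder $h_k$ is $(-\D)^\frac n2$-harmonic in $B_R$; the key point is that the weak datum $\int_{B_R}|\D u_k|\le C$ then upgrades, via the Pizzetti-type bound \eqref{har-est}, to a two-sided $L^\infty$ bound on $h_k$ after a Harnack argument. Concretely, set
$$p_k(x):=\frac{1}{\gamma_n}\int_{B_R}\log\left(\frac{1}{|x-y|}\right)f_k(y)\,dy.$$
Since $\frac1{\gamma_n}\log\frac1{|\cdot|}$ is the fundamental solution of $(-\D)^\frac n2$ in every dimension (in the sense of Definition \ref{def-operator} when $n$ is odd), $p_k$ solves $(-\D)^\frac n2p_k=f_k$ in $B_R$; standard potential estimates give $\|p_k\|_{C^{n-1,\alpha}(B_{R'})}\le C$ for every $R'<R$, and, when $n$ is odd, also $\|\D^\frac{n-1}{2}p_k\|_{L_\frac12(\R^n)}\le C$ (the kernel $\D^\frac{n-1}{2}\log\frac1{|z|}$ decays like $|z|^{-(n-1)}$ and is paired with the compactly supported $f_k$). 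Thus $h_k:=u_k-p_k$ satisfies $(-\D)^\frac n2h_k=0$ in $B_R$, with $h_k\le C$ on $B_{R'}$ (as $u_k\le C$ and $p_k\ge-C$), $|h_k(0)|\le C$, and $\int_{B_{R'}}|\D h_k|\,dx\le C$ (as $\D p_k$ is bounded).

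I would then read off interior regularity for $h_k$ from the homogeneous equation. For $n$ even, $h_k$ is polyharmonic of order $\frac n2$ in $B_R$, hence smooth, and \eqref{har-est} applied to the polyharmonic function $\D h_k$ gives $\|\D h_k\|_{L^\infty(B_{R''})}\le C$ on a smaller ball. For $n$ odd, set $g_k:=(-\D)^\frac{n-1}{2}h_k$; then $(-\D)^\frac12g_k=0$ in $B_R$ and $g_k\in L_\frac12(\R^n)$ with $\|g_k\|_{L_\frac12(\R^n)}\le C$, so interior regularity for $\frac12$-harmonic functions (e.g.\ through the Caffarelli--Silvestre extension; cf.\ \cite{Valdinoci}) yields $\|g_k\|_{C^j(B_{R'})}\le C\|g_k\|_{L_\frac12(\R^n)}\le C$ for every $j$; writing $-\D h_k$ as a particular bounded smooth solution of $(-\D)^\frac{n-3}{2}(-\D h_k)=g_k$ plus a polyharmonic function whose $L^1(B_{R'})$ norm is controlled by $\int_{B_{R'}}|\D h_k|\,dx$, \eqref{har-est} again gives $\|\D h_k\|_{L^\infty(B_{R''})}\le C$. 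Either way, $\D h_k$ is uniformly bounded on an interior ball.

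Now comes the Harnack step. Let $\phi_k$ solve $\D\phi_k=\D h_k$ in $B_{R''}$ with $\phi_k=0$ on $\partial B_{R''}$; by the maximum principle $\|\phi_k\|_{L^\infty(B_{R''})}\le C$. Then $H_k:=h_k-\phi_k$ is harmonic in $B_{R''}$, bounded above by some $C_0$, and $|H_k(0)|\le C$; applying the Harnack inequality to the nonnegative harmonic function $C_0-H_k$ gives $\sup_{B_{R'''}}(C_0-H_k)\le C\,(C_0-H_k)(0)\le C$, hence $\|h_k\|_{L^\infty(B_{R'''})}\le C$. Feeding this back into \eqref{har-est} (for $n$ even) or into elliptic estimates for $(-\D)^\frac{n-1}{2}h_k=\pm g_k$ with bounded smooth right-hand side (for $n$ odd) gives $\|h_k\|_{C^{n-1,\alpha}(B_{R/8})}\le C$; the finitely many radius reductions above are easily accommodated within $B_R$. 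Together with the bound on $p_k$ this yields $\sup_k\|u_k\|_{C^{n-1,\alpha}(B_{R/8})}<\infty$, and Arzel\`a--Ascoli extracts a subsequence converging in $C^{n-1}(B_{R/8})$.

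The main obstacle is that only an \emph{upper} bound on $u_k$ together with the single normalization $u_k(0)=0$ is available, so one cannot apply interior $L^p$ or Schauder estimates to $u_k$ directly: one has to exclude $u_k$ drifting to $-\infty$ on a set of positive measure. The decomposition makes this work because the $L^1$ bound on $\D u_k$ turns into an $L^\infty$ bound on $\D h_k$ through the rigidity of polyharmonic (resp.\ $\frac12$-harmonic) functions encoded in \eqref{har-est}, after which Harnack upgrades ``bounded above with a finite value at the origin'' to a genuine two-sided bound. In odd dimension there is the additional burden of the nonlocal operator: one must verify that $h_k$ inherits the uniform tail decay $\D^\frac{n-1}{2}h_k\in L_\frac12(\R^n)$, so that interior regularity for the fractional Laplacian is applicable to $g_k$.
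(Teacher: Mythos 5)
Your proof is correct, but it follows a genuinely different route from the paper's in two places. First, for the particular solution carrying the inhomogeneity $f_k=(-\D)^{\frac n2}u_k$: you take the Newtonian-type potential $p_k=\frac1{\gamma_n}\log\frac1{|\cdot|}*(f_k\chi_{B_R})$, whereas the paper solves a boundary value problem ($w_k$ with Navier data $\D^jw_k=0$ on $\partial B_R$ when $n$ is even, and a fractional Dirichlet problem via Ros-Oton--Serra's Lemma~\ref{f-s} when $n$ is odd). Both give uniform $C^{n-1,\beta}$ interior bounds on the particular solution, and for $n$ odd both give the needed $L_{1/2}$ control; the potential is slightly more hands-on, the BVP outsources the estimate to standard references. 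Second, and more substantively, the step that upgrades ``$h_k$ bounded above, $|h_k(0)|$ bounded, $\D h_k$ bounded'' to a two-sided bound: you subtract a Poisson correction $\phi_k$ (with $\D\phi_k=\D h_k$, zero boundary data) to reduce to a genuinely harmonic function $H_k$ and then apply the classical Harnack inequality to $C_0-H_k\ge0$. The paper instead uses the Pizzetti mean-value identity~\eqref{piz}: expanding $\frac1{|B_R|}\int_{B_R}h_k$ as $h_k(0)+\sum_{i\ge1}c_iR^{2i}\D^ih_k(0)$, the bound on $\D^ih_k(0)$ (already available from~\eqref{har-est} applied to the polyharmonic function $\D h_k$) directly controls $\int_{B_R}h_k^-$ and hence $\|h_k\|_{L^1(B_R)}$, after which~\eqref{har-est} is applied once more. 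The Pizzetti route skips the extra Poisson problem and Harnack step, staying entirely inside the polyharmonic toolbox and delivering the $L^1$ bound that~\eqref{har-est} wants; your Harnack route is more ``second-order elliptic'' in flavor and gives an $L^\infty$ bound directly, at the cost of one additional auxiliary problem. Your treatment of the odd case (working with $g_k=(-\D)^{\frac{n-1}2}h_k$, using interior regularity for $\frac12$-harmonic functions, then peeling off a polyharmonic remainder from $-\D h_k$) is also organized differently from the paper's (which first proves $\|\D^{\frac{n-1}{2}}u_k\|_{C^{1/2}(B_{R/2})}\le C$ by a separate split and then says ``proceed as in the even case''), but the content is equivalent. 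Both proofs end with uniform interior $C^{n-1,\beta}$ bounds and Arzel\`a--Ascoli.
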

\begin{proof}
 First we prove the lemma for $n$ even. 
 
 We write $u_k=w_k+h_k$ where $$\left\{\begin{array}{ll}
                                 (-\D)^\frac n2 w_k=(-\D)^\frac n2 u_k\quad \text{in }B_R\\
                                 \D^jw_k=0,\quad \text{on }\partial B_R,\quad j=0,1,\dots, \frac{n-2}{2}.
                                \end{array}\right.
$$
Then by standard elliptic estimates, $w_k$'s are uniformly bounded in $C^{n-1,\beta}(B_R)$. Therefore, 
$$|h_k(0)|\leq C, \quad \| h_k^+\|_{L^\infty(B_R)}\leq C,\quad \int_{B_R}|\D h_k|dx\leq C.$$ 
Since $h_k$'s are $\frac n2$-harmonic, $\D h_k$'s are $(\frac n2-1)$-harmonic in $B_R$, and by \eqref{har-est} we obtain 
$$  \|\D h_k\|_{C^n(B_\frac R4)}\leq C\|\D h_k\|_{L^1(B_{R})}\leq C.$$ Using the identity \eqref{piz} we bound 
\begin{align*}
 \frac{1}{|B_R|}\int_{B_R(0)}h_k^-(z)dz&=\frac{1}{|B_R|}\int_{B_R(0)}h_k^+(z)dz-\frac{1}{|B_R|}\int_{B_R(0)}h_k(z)dz\\
 &= \frac{1}{|B_R|}\int_{B_R(0)}h_k^+(z)dz -h_k(0)-\sum_{i=1}^{m-1}c_iR^{2i}\D^ih_k(0)\\
 &\leq C,
\end{align*}
and hence $$\int_{B_R}|h_k(z)|dz=\int_{B_R}h_k^+(z)dz+\int_{B_R}h^-_k(z)dz\leq C.$$ 
Again by \eqref{har-est} we obtain 
$$\| h_k\|_{C^n(B_\frac R4)}\leq C\| h_k\|_{L^1(B_{R})}\leq C.$$ 
Thus, $u_k$'s are uniformly bounded in $C^{n-1,\beta}(B_\frac R4)$ 
and (up to a subsequence)
$u_k\to u$ in $C^{n-1}(B_\frac R4)$ for some $u\in C^{n-1}(B_\frac R4)$.

It remains to prove the lemma for $n$ odd. 

If $n$ is odd then $\frac{n-1}{2}$ is an integer. 
We split $\D^\frac{n-1}{2}u_k=w_k+h_k$ where $$\left\{\begin{array}{ll}
                                                (-\D)^\frac12w_k= (-\D)^\frac12\D^\frac{n-1}{2}u_k\quad\text{in }B_R\\
                                                w_k=0\quad\text{in }B_R^c.
                                               \end{array}\right.
$$
Then by Lemmas \ref{f-h} and \ref{f-s} one has $\|\D^\frac{n-1}{2}u_k\|_{C^\frac{1}{2}(B_\frac R2)}\leq C.$ Now one can proceed as in the case of 
even integer.
\end{proof}

\begin{lem}[\cite{JMMX}]\label{f-h}
 Let $u\in L_\sigma(\R^n)$ for some $\sigma\in (0,1)$ and $(-\D)^\sigma u=0$ in $B_{2R}$. Then for every $k\in \mathbb{N}$
 $$\|\nabla^ku\|_{C^0(B_R)}\leq C(n,\sigma,k)\frac{1}{R^k}\left(R^{2\sigma}\int_{\R^n\setminus B_{2R}}\frac{|u(x)|}{|x|^{n+2\sigma}}dx+
 \frac{\|u\|_{L^1(B_{2R})}}{R^n}\right)$$
 where $\alpha\in (0,1)$ and $k$ is an nonnegative integer.  
\end{lem}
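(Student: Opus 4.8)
\emph{Proof plan.} The plan is to combine a scaling reduction with the classical Poisson representation for $\sigma$-harmonic functions on a ball, the single delicate point being that the Poisson kernel blows up at the boundary sphere; I will remove this singularity by averaging the representation over the radius.

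First I would rescale. For $v(x):=u(Rx)$ one has $(-\D)^\sigma v=0$ in $B_2$, $v\in L_\sigma(\R^n)$, $\nabla^kv(x)=R^k(\nabla^ku)(Rx)$, and a change of variables gives $\|v\|_{L^1(B_2)}=R^{-n}\|u\|_{L^1(B_{2R})}$ and $\int_{B_2^c}|z|^{-n-2\sigma}|v(z)|\,dz=R^{2\sigma}\int_{B_{2R}^c}|y|^{-n-2\sigma}|u(y)|\,dy$. So it suffices to prove, for $R=1$ and every nonnegative integer $k$,
\[
\sup_{B_1}|\nabla^kv|\;\leq\;C(n,\sigma,k)\Big(\|v\|_{L^1(B_2)}+\int_{B_2^c}\frac{|v(y)|}{|y|^{n+2\sigma}}\,dy\Big);
\]
scaling back then reinstates the factors $R^{-k}$, $R^{2\sigma}$ and $R^{-n}$ as in the statement, and the factor-two gap between $B_R$ and $B_{2R}$ is exactly the room the argument below needs.

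For each $\rho\in(3/2,7/4)$ the function $v$ is $\sigma$-harmonic in $B_\rho\Subset B_2$ and lies in $L_\sigma(\R^n)$, so by the classical Poisson formula $v(x)=\int_{\R^n\setminus B_\rho}P_\rho(x,y)v(y)\,dy$ for $x\in B_\rho$, with $P_\rho(x,y)=c_{n,\sigma}\big(\tfrac{\rho^2-|x|^2}{|y|^2-\rho^2}\big)^\sigma|x-y|^{-n}$ for $|x|<\rho<|y|$. For a single $\rho$ this kernel behaves like $(|y|-\rho)^{-\sigma}$ as $|y|\downarrow\rho$, so it does not alone control $v(x)$ by $\|v\|_{L^1(B_2)}$. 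Instead I would average over $\rho\in(3/2,7/4)$: since $1<3/2<7/4<2$, for every $x\in B_1$ this yields $v(x)=\int_{\R^n}\mathcal K(x,y)v(y)\,dy$ with $\mathcal K(x,y):=4\int_{3/2}^{7/4}P_\rho(x,y)\chi_{\{\rho<|y|\}}\,d\rho$, so in particular $\mathcal K(x,y)=0$ for $|y|\leq 3/2$. The interchange of integrations and the absolute convergence are justified by Tonelli together with the elementary bound $\int_{3/2}^{7/4}(|y|^2-\rho^2)^{-\sigma}\,d\rho\leq C(\sigma)$, valid because $|y|+\rho$ is bounded below and the substitution $s=|y|-\rho$ reduces the integral to $\lesssim\int_0^{1/2}s^{-\sigma}\,ds<\infty$ as $\sigma<1$ (carrying this out for a.e.\ $\rho$ is routine).

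It then remains to estimate $\mathcal K$ and its $x$-derivatives. On $B_1\times\{3/2<|y|\leq 2\}$ one has $|x-y|\geq\tfrac12$ and $\rho^2-|x|^2\geq\tfrac54$, so distributing $\nabla_x^k$ by Leibniz and using the same integrable-singularity bound gives $|\nabla_x^k\mathcal K(x,y)|\leq C(n,\sigma,k)$; since $\{3/2<|y|\leq 2\}\subset B_2$, this part of the integral is at most $C\|v\|_{L^1(B_2)}$. On $\{|y|>2\}$ one has $|x-y|\asymp|y|$ and $|y|^2-\rho^2\asymp|y|^2$, hence $|\nabla_x^k\mathcal K(x,y)|\leq C(n,\sigma,k)|y|^{-n-2\sigma}$, and this part is at most $C\int_{B_2^c}|y|^{-n-2\sigma}|v(y)|\,dy$. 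Differentiating the averaged representation under the integral sign, which is legitimate thanks to these locally uniform kernel bounds, gives the displayed $R=1$ estimate; scaling back finishes the proof. The only real obstacle is the boundary blow-up of $P_\rho$, handled by the averaging in $\rho$; everything else---the representation formula, the two elementary kernel bounds, and differentiation under the integral---is routine.
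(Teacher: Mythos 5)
The paper does not prove this lemma at all: it is quoted verbatim from \cite{JMMX}, so there is no internal proof to compare against. Your argument is a correct self-contained proof of the quoted estimate. The scaling reduction to $R=1$ is carried out properly (the three factors $R^{-k}$, $R^{2\sigma}$, $R^{-n}$ come out exactly as in the statement, with a constant independent of $R$), and the key difficulty is correctly identified and resolved: for a single radius $\rho$ the Poisson kernel $P_\rho(x,y)=c_{n,\sigma}\bigl(\tfrac{\rho^2-|x|^2}{|y|^2-\rho^2}\bigr)^{\sigma}|x-y|^{-n}$ has a nonintegrable-looking weight $(|y|-\rho)^{-\sigma}$ against the mere $L^1$ information on $B_2$, whereas averaging over $\rho\in(3/2,7/4)$ produces a kernel $\mathcal K$ that is bounded, together with all its $x$-derivatives, on $B_1\times\{3/2<|y|\le 2\}$ (using $|x-y|\ge 1/2$, $\rho^2-|x|^2\ge 5/4$ and $\int_{3/2}^{7/4}(|y|^2-\rho^2)^{-\sigma}d\rho\le C(\sigma)$ since $\sigma<1$), and satisfies $|\nabla^k_x\mathcal K(x,y)|\le C|y|^{-n-2\sigma}$ for $|y|>2$; Tonelli and differentiation under the integral are justified exactly as you say. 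The only point you lean on without proof is the ``classical Poisson formula'' itself, i.e.\ that a distributional solution of $(-\D)^\sigma v=0$ in $B_2$ with $v\in L_\sigma(\R^n)$ is smooth in $B_2$ and admits the Poisson representation on every ball $\bar B_\rho\subset B_2$; this is indeed a standard fact (Landkof, Bogdan, Silvestre), but since the lemma you are proving is itself of this ``standard toolbox'' type, it would be worth citing a precise reference for that representation so the argument does not tacitly assume a statement of comparable depth. With that reference added, your proof stands on its own and is arguably more transparent than simply deferring to \cite{JMMX}, since the radius-averaging device cleanly isolates why the $L^1(B_{2R})$ norm, rather than an $L^\infty$ bound, suffices near $\partial B_{2R}$.
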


\begin{lem}[\cite{Ros}]\label{f-s}
 Let $\sigma\in(0,1)$. Let $u$ be a solution of $$\left\{\begin{array}{ll}
                         (-\D)^\sigma u=f\quad\text{in }B_R\\
                                u=0\quad \text{in }B_R^c
                                                         \end{array}\right.
$$
Then $$\|u\|_{C^\sigma(\R^n)}\leq C(R,\sigma)\|f\|_{L^\infty(B_R)}.$$
\end{lem}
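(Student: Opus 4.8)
This is the ball case of the boundary regularity theorem of Ros-Oton and Serra \cite{Ros}; the plan is to combine an explicit barrier — which yields at once the $L^\infty$ bound and the sharp boundary decay $|u(x)|\le C\|f\|_{L^\infty(B_R)}\,\delta(x)^\sigma$, where $\delta(x):=\mathrm{dist}(x,B_R^c)$ — with interior regularity for $(-\D)^\sigma$ applied at the scale $\delta(x_0)$ near each point $x_0$. The crucial point is that the decay exponent $\sigma$ is exactly the one matching the scaling of the $C^\sigma$ seminorm, so the resulting local estimates are scale invariant and patch all the way to $\partial B_R$.

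First I would recall the classical identity $(-\D)^\sigma\big[(1-|x|^2)_+^\sigma\big]=c_{n,\sigma}$ in $B_1$ with $c_{n,\sigma}>0$ explicit; by scaling $x\mapsto x/R$ the function $W(x):=c_{n,\sigma}^{-1}\|f\|_{L^\infty(B_R)}(R^2-|x|^2)_+^\sigma$ satisfies $(-\D)^\sigma W=\|f\|_{L^\infty(B_R)}$ in $B_R$ and $W\ge 0$ in $B_R^c$. Then $(-\D)^\sigma(W\pm u)=\|f\|_{L^\infty(B_R)}\pm f\ge 0$ in $B_R$ and $W\pm u=W\ge 0$ in $B_R^c$, so the comparison principle for $(-\D)^\sigma$ gives $|u|\le W$ on $\R^n$, and since $R^2-|x|^2\le 2R\,\delta(x)$ on $B_R$,
$$|u(x)|\le C(R)\,\|f\|_{L^\infty(B_R)}\,\delta(x)^\sigma\quad\text{on }\R^n ,$$
which in particular gives $\|u\|_{L^\infty(\R^n)}+\|u\|_{L_\sigma(\R^n)}\le C(R)\|f\|_{L^\infty(B_R)}$.

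Next, fix $x_0\in B_R$ and set $d:=\delta(x_0)$. If $d\ge R/4$, the two bounds above fed into a standard interior $C^\sigma$ estimate for $(-\D)^\sigma u=f$ on a ball of fixed radius already give $[u]_{C^\sigma(B_{R/8}(x_0))}\le C(R)\|f\|_{L^\infty(B_R)}$. If $d<R/4$, I would rescale: $w(y):=u(x_0+\tfrac d2 y)$ solves $(-\D)^\sigma w=g$ in $B_1$ with $g(y)=(\tfrac d2)^{2\sigma}f(x_0+\tfrac d2 y)$. On $B_{d/2}(x_0)$ one has $\delta\le\tfrac32 d$, so the barrier bound yields $\|w\|_{L^\infty(B_1)}\le C\|f\|_{L^\infty(B_R)}d^\sigma$; clearly $\|g\|_{L^\infty(B_1)}\le Cd^{2\sigma}\|f\|_{L^\infty(B_R)}\le C(R)\|f\|_{L^\infty(B_R)}d^\sigma$ for $d\le R$; and, using $\delta(y)\le d+|y-x_0|\le 3|y-x_0|$ for $|y-x_0|\ge d/2$, a direct computation from the barrier bound gives $\|w\|_{L_\sigma(\R^n)}\le C(R)\|f\|_{L^\infty(B_R)}d^\sigma$. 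Plugging these into an interior estimate of the form $\|w\|_{C^\sigma(\overline{B_{1/2}})}\le C(n,\sigma)\big(\|g\|_{L^\infty(B_1)}+\|w\|_{L^\infty(B_1)}+\|w\|_{L_\sigma(\R^n)}\big)$ — the analogue with right-hand side of Lemma \ref{f-h}, which one obtains e.g. via the Caffarelli--Silvestre extension and interior Schauder estimates — and undoing the scaling (so that $[w]_{C^\sigma(B_{1/2})}=(\tfrac d2)^\sigma[u]_{C^\sigma(B_{d/4}(x_0))}$), the powers of $d$ cancel and we arrive at the scale-invariant bound $[u]_{C^\sigma(B_{d/4}(x_0))}\le C(R)\|f\|_{L^\infty(B_R)}$, valid for every $x_0\in B_R$.

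Finally I would globalize by the usual dichotomy: for $x,y\in\R^n$ with, say, $\delta(x)\ge\delta(y)$, if $|x-y|\le\tfrac18\delta(x)$ then $x,y$ lie in a common ball from the previous step and the local seminorm bound applies; if $|x-y|>\tfrac18\delta(x)$ then $\delta(x),\delta(y)\le 8|x-y|$ (this covers also the case where one point lies outside $B_R$, where $u=0$ and $\delta(x)\le|x-y|$), so the pointwise decay $|u|\le C\|f\|_{L^\infty(B_R)}\delta^\sigma$ gives $|u(x)-u(y)|\le C(R)\|f\|_{L^\infty(B_R)}|x-y|^\sigma$ directly. Combined with the $L^\infty$ bound this yields $\|u\|_{C^\sigma(\R^n)}\le C(R)\|f\|_{L^\infty(B_R)}$. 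The main obstacle is exactly the boundary interaction of the nonlocal operator: the far tail $\int_{\R^n\setminus B_{d/2}(x_0)}|u(y)|\,|x_0-y|^{-n-2\sigma}\,dy$ is a priori only of order $d^{-\sigma}$, and it is the sharp decay $|u|\lesssim\delta^\sigma$ that, after the $d^{2\sigma}$ rescaling factor, upgrades its contribution to a bounded quantity — this is what makes the interior estimate scale invariant and summable up to $\partial B_R$; and the exponent $\sigma$ cannot be improved, as the barrier $(R^2-|x|^2)_+^\sigma$ itself already shows.
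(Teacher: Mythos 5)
The paper never proves this lemma: it is quoted directly from Ros-Oton--Serra \cite{Ros} (their main boundary-regularity result), so there is no internal proof to compare with. Your argument is correct and is essentially the proof given in that reference: the Getoor barrier $(R^2-|x|^2)_+^\sigma$ together with the comparison principle gives the sharp decay $|u(x)|\le C(R)\|f\|_{L^\infty(B_R)}\,\mathrm{dist}(x,\partial B_R)^\sigma$, which, inserted into interior $C^\sigma$ estimates rescaled to the scale $\mathrm{dist}(x_0,\partial B_R)$ and combined with the standard case distinction on $|x-y|$ versus the boundary distance, yields the global bound; your scaling bookkeeping (the cancellation of the powers of $d$, the tail estimate in $L_\sigma$) is accurate. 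The only point worth flagging is that the comparison step tacitly assumes $u$ lies in a class where the maximum principle for $(-\D)^\sigma$ applies (e.g.\ bounded weak or viscosity solutions), which is how the statement is understood in \cite{Ros}.
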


\noindent\textbf{Acknowledgements}
I would like to thank my advisor Prof. Luca Martinazzi for suggesting the problem and for many stimulating conversations.

\end{document}